\documentclass[11pt]{amsart}

\author[C.~Sanna]{Carlo Sanna$^\dagger$}
\address{\parbox{\linewidth}{
		Department of Mathematical Sciences, Politecnico di Torino\\
		Corso Duca degli Abruzzi 24, 10129 Torino, Italy\\[-8pt]}}
\email{carlo.sanna@polito.it}
\thanks{$\dagger$ C.~Sanna is a member of the INdAM group GNSAGA and of CrypTO, the group of Cryptography and Number Theory of the Politecnico di Torino}

\subjclass[2010]{Primary: 11B39, Secondary: 11N05, 11N37.}
\keywords{asymptotic formula; Fibonacci numbers; index of appearance; Lucas sequence; prime numbers; rank of appearance}

\title{On the index of appearance of a Lucas sequence}

\usepackage{amsmath}
\usepackage{amssymb}
\usepackage{amsthm}
\usepackage{booktabs}
\usepackage{placeins}
\usepackage{geometry}
\usepackage{bm}
\geometry{left=1.15in, right=1.15in, top=.72in, bottom=.72in}
\usepackage{color}
\usepackage{hyperref}
\hypersetup{colorlinks=true}
\usepackage{enumitem}
\setlist[enumerate]{label=(\roman*),labelindent=1em,itemsep=0.5em,topsep=0.5em}

\newtheorem{theorem}{Theorem}[section]

\newtheorem{lemma}[theorem]{Lemma}
\theoremstyle{remark}

\newtheorem{example}{Example}[section]

\DeclareMathOperator{\Gal}{Gal}

\DeclareMathOperator{\Norm}{N}
\DeclareMathOperator{\id}{id}

\uchyph=0

\begin{document}

\begin{abstract}
    Let $\bm{u} = (u_n)_{n \geq 0}$ be a Lucas sequence, that is, a sequence of integers satisfying $u_0 = 0$, $u_1 = 1$, and $u_n = a_1 u_{n - 1} + a_2 u_{n - 2}$ for every integer $n \geq 2$, where $a_1$ and $a_2$ are fixed nonzero integers.
    For each prime number $p$ with $p \nmid 2a_2D_{\bm{u}}$, where $D_{\bm{u}} := a_1^2 + 4a_2$, let $\rho_{\bm{u}}(p)$ be the \emph{rank of appearance} of $p$ in $\bm{u}$, that is, the smallest positive integer $k$ such that $p \mid u_k$.
    It is well known that $\rho_{\bm{u}}(p)$ exists and that $p \equiv \big(D_{\bm{u}} \mid p \big) \pmod {\rho_{\bm{u}}(p)}$, where $\big(D_{\bm{u}} \mid p \big)$ is the Legendre symbol.
    Define the \emph{index of appearance} of $p$ in $\bm{u}$ as $\iota_{\bm{u}}(p) := \left(p - \big(D_{\bm{u}} \mid p \big)\right) / \rho_{\bm{u}}(p)$.
    For each positive integer $t$ and for every $x > 0$, let $\mathcal{P}_{\bm{u}}(t, x)$ be the set of prime numbers $p$ such that $p \leq x$, $p \nmid 2a_2 D_{\bm{u}}$, and $\iota_{\bm{u}}(p) = t$.

    Under the Generalized Riemann Hypothesis, and under some mild assumptions on $\bm{u}$, we prove that
	\begin{equation*}
        \#\mathcal{P}_{\bm{u}}(t, x) = A\, F_{\bm{u}}(t) \, G_{\bm{u}}(t) \, \frac{x}{\log x} + O_{\bm{u}}\!\left(\frac{x}{(\log x)^2} + \frac{x \log (2\log x)}{\varphi(t) (\log x)^2}\right) ,
    \end{equation*}
    for all positive integers $t$ and for all $x > t^3$, where $A$ is the Artin constant, $F_{\bm{u}}(\cdot)$ is a multiplicative function, and $G_{\bm{u}}(\cdot)$ is a periodic function (both these functions are effectively computable in terms of $\bm{u}$).
    Furthermore, we provide some explicit examples and numerical data.
\end{abstract}

\maketitle

\section{Introduction}

Let $\bm{u} = (u_n)_{n \geq 0}$ be a Lucas sequence, that is, a sequence of integers satisfying $u_0 = 0$, $u_1 = 1$, and $u_n = a_1 u_{n - 1} + a_2 u_{n - 2}$ for every integer $n \geq 2$, where $a_1$ and $a_2$ are fixed nonzero integers.
For each prime number $p$ with $p \nmid a_2$, let $\rho_{\bm{u}}(p)$ be the \emph{rank of appearance} of $p$ in $\bm{u}$, that is, the smallest positive integer $k$ such that $p \mid u_k$.
It is well known that $\rho_{\bm{u}}(p)$ exists (for this and other elementary facts on $\rho_{\bm{u}}(p)$ mentioned in the introduction, see~\cite[Chapter~1]{MR1761897}).
Furthermore, we have that $p \equiv \big(D_{\bm{u}} \mid p\big) \pmod {\rho_{\bm{u}}(p)}$ for every prime number $p$ with $p \nmid 2 a_2 D_{\bm{u}}$, where $D_{\bm{u}} := a_1^2 + 4a_2$ and $\big(D_{\bm{u}} \mid p\big)$ is the Legendre symbol.
We define the \emph{index of appearance} of $p$ in $\bm{u}$ as $\iota_{\bm{u}}(p) := \left(p - \big(D_{\bm{u}} \mid p \big)\right) / \rho_{\bm{u}}(p)$, which, by the previous consideration, is a positive integer.
Moreover, for each positive integer $t$ and for every $x > 0$, we define
\begin{equation*}
	\mathcal{P}_{\bm{u}}(t, x) := \big\{p \leq x : p \nmid 2 a_2 D_{\bm{u}},\, \iota_{\bm{u}}(p) = t \big\} .
\end{equation*}
Note that $\rho_{\bm{u}}(p)$, respectively $\iota_{\bm{u}}(p)$, is somehow analog to the multiplicative order $r_g(p)$ modulo $p$, respectively the residual index $i_g(p) := (p - 1) / r_g(p)$ modulo $p$, of a rational number $g$ (assuming that $p$ does not divide the numerator and the denominator of $g$).
In particular, the set $\mathcal{P}_{\bm{u}}(t, x)$ is analog to the set of prime numbers $p$ for which $g$ is a \emph{near-primitive root} modulo $p$, that is, $i_p(g) = t$ for a fixed $t$.
This latter set has been studied by several authors \cite{MR3286520,MR1767657,MR2035537,MR3086966} (see also \cite{MR4231532,MR2314267} for generalizations to number fields).

As a first result, we provide a (conditional) asymptotic formula for $\#\mathcal{P}_{\bm{u}}(t, x)$.
We remark that the proof is a close adaptation of Hooley's proof of the Artin's conjecture under the Generalized Riemann Hypothesis (GRH)~\cite{MR207630}.

\begin{theorem}\label{thm:asymptotic}
	Assume the GRH.
	Let $\bm{u} = (u_n)_{n \geq 0}$ be a nondegenerate Lucas sequence with nonsquare $D_{\bm{u}}$.
	Then, for every positive integer $t$, there exists $\delta_{\bm{u}}(t) \geq 0$ such that
	\begin{equation*}
		\#\mathcal{P}_{\bm{u}}(t, x) = \delta_{\bm{u}}(t) \, \frac{x}{\log x} + O_{\bm{u}}\!\left(\frac{x}{(\log x)^2} + \frac{x \log (2\log x)}{\varphi(t) (\log x)^2}\right) ,
	\end{equation*}
	for all $x > t^3$.
\end{theorem}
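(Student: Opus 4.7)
The plan is to adapt Hooley's GRH-conditional proof of Artin's primitive root conjecture \cite{MR207630}. Let $\alpha, \beta$ denote the roots of $x^2 - a_1 x - a_2$, set $K := \mathbb{Q}(\sqrt{D_{\bm{u}}})$ (a genuine quadratic field, since $D_{\bm{u}}$ is not a square), and put $\pi := \alpha/\beta \in K^\times$. By nondegeneracy, $\pi$ is not a root of unity; also $\Norm_{K/\mathbb{Q}}(\pi) = 1$, so for each prime $p \nmid 2 a_2 D_{\bm{u}}$ and each prime $\mathfrak{p}$ of $K$ above $p$, the order of $\pi \bmod \mathfrak{p}$ in $(\mathcal{O}_K/\mathfrak{p})^\times$ equals $\rho_{\bm{u}}(p)$ and divides $p - \big(D_{\bm{u}} \mid p\big)$. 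For each positive integer $n$, set
\[
L_n := K\bigl(\zeta_n,\, \pi^{1/n}\bigr),
\]
a Galois extension of $\mathbb{Q}$. A direct check shows that $n \mid \iota_{\bm{u}}(p)$ if and only if $\operatorname{Frob}_p$ in $\Gal(L_n/\mathbb{Q})$ lies in a distinguished conjugation-invariant subset $C_n$, consisting of the (at most two) conjugacy classes corresponding to the two possible values of $\big(D_{\bm{u}} \mid p\big)$.

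By Möbius inversion,
\[
\#\mathcal{P}_{\bm{u}}(t, x) \;=\; \sum_{k \geq 1} \mu(k)\, N_{\bm{u}}(tk, x),
\]
where $N_{\bm{u}}(n, x) := \#\{p \leq x : p \nmid 2 a_2 D_{\bm{u}},\ \operatorname{Frob}_p \in C_n\}$. Following Hooley, I would split this sum at thresholds $\xi_1 := \tfrac{1}{6}\log x$, $\xi_2 := \sqrt{x}/(\log x)^2$, and $\xi_3 := \sqrt{x}\log x$, and estimate the four resulting partial sums separately. For $k \leq \xi_1$ I would apply the GRH-effective Chebotarev density theorem of Lagarias--Odlyzko, using the estimates $[L_n : \mathbb{Q}] \asymp_{\bm{u}} n\varphi(n)$ and $\log|\Delta_{L_n}| \ll_{\bm{u}} n\varphi(n)\log n$ to extract the main term
\[
\delta_{\bm{u}}(t) \cdot \frac{x}{\log x} := \biggl(\sum_{k \geq 1} \mu(k)\, \frac{|C_{tk}|}{[L_{tk} : \mathbb{Q}]}\biggr) \frac{x}{\log x},
\]
with remainder $O(x/(\log x)^2)$. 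For $\xi_1 < k \leq \xi_2$, I would drop the Kummer condition and apply Brun--Titchmarsh to the congruence $p \equiv \pm 1 \pmod{tk}$, obtaining $N_{\bm{u}}(tk, x) \ll x/(\varphi(tk)\log(x/tk))$; the inequalities $\varphi(tk) \geq \varphi(t)\varphi(k)$ and $\sum_{k \leq y} 1/\varphi(k) \ll \log y$ then collect this contribution into the desired $O\bigl(x\log(2\log x)/(\varphi(t)(\log x)^2)\bigr)$. Hooley's two elementary arguments handle the remaining ranges $\xi_2 < k \leq \xi_3$ and $k > \xi_3$: in both, the condition $tk \mid \iota_{\bm{u}}(p)$ forces $\rho_{\bm{u}}(p)$ to be small, so $p$ divides $u_m$ for some $m$ not exceeding a small power of $x$, and the trivial bound $\omega(u_m) \ll_{\bm{u}} m$ (from $|u_m| \leq C_{\bm{u}}^{\,m}$), combined with the hypothesis $x > t^3$, keeps these tails of size $O(x/(\log x)^2)$.

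The principal obstacle is the degree estimate $[L_n : \mathbb{Q}] \asymp_{\bm{u}} n\varphi(n)$: one must analyse the entanglement between the Kummer tower $K(\pi^{1/n})/K(\zeta_n)$, the cyclotomic tower $K(\zeta_n)/K$, and the occasional containment $K \subseteq \mathbb{Q}(\zeta_n)$ (by Kronecker--Weber, exactly for $n$ divisible by the discriminant of $K$). Both hypotheses enter at this point: nondegeneracy of $\bm{u}$ keeps $\pi$ away from roots of unity, so that $[K(\zeta_n, \pi^{1/n}) : K(\zeta_n)]$ differs from $n$ by only a bounded factor uniformly in $n$; the non-squareness of $D_{\bm{u}}$ ensures that $K/\mathbb{Q}$ is a genuine quadratic extension, so $[L_n : \mathbb{Q}]$ differs from $2n\varphi(n)$ by only a bounded multiplicative factor depending on $\bm{u}$. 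With this in hand, assembling the four estimates yields the stated asymptotic, and $\delta_{\bm{u}}(t) \geq 0$ follows at once from its interpretation as the natural density of the (possibly empty) set of primes counted by $\mathcal{P}_{\bm{u}}(t, \cdot)$.
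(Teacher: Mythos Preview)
Your overall architecture matches the paper's: the Kummer extensions $L_n = K(\zeta_n, \pi^{1/n})$, the Frobenius characterisation of $n \mid \iota_{\bm{u}}(p)$ via a conjugacy set $C_n$, the series definition of $\delta_{\bm{u}}(t)$, the degree estimate $[L_n:\mathbb{Q}] \asymp_{\bm{u}} n\varphi(n)$, and a four-range Hooley split at exactly the same thresholds. The gap is that you have assigned the wrong method to two of the four ranges, and the resulting bounds are too weak.

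On $(\xi_1,\xi_2]$ you apply Brun--Titchmarsh, but even restricting to primes $q$ in that range, Mertens gives $\sum_{\xi_1 < q \le \xi_2} 1/q \asymp \log\log x$ (since $\xi_1 \asymp \log x$ while $\xi_2 \asymp x^{1/2}$), producing a contribution of order $x\log\log x/(\varphi(t)\log x)$---a full factor of $\log x$ too large. Indeed your own stated bound $\sum_{k\le y}1/\varphi(k)\ll\log y$ with $y=\xi_2$ already gives $\log x$, not $\log(2\log x)$. On $(\xi_2,\xi_3]$ the divisor argument only yields $\rho_{\bm u}(p)\le (p+1)/(t\xi_2) \ll x^{1/2}(\log x)^2$, and then $\sum_{m\le x^{1/2}(\log x)^2} m \asymp x(\log x)^4$, which is useless. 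The correct assignment (as in the paper and in Hooley's original argument) is: a \emph{second} application of GRH Chebotarev on $(\xi_1,\xi_2]$, where the sharper decay $|C_{qt}|/[L_{qt}:\mathbb Q]\ll 1/(q^2\varphi(t)t)$ gives $\sum_{q>\xi_1}1/q^2\ll 1/\xi_1 \ll 1/\log x$; Brun--Titchmarsh only on the narrow window $(\xi_2,\xi_3]$, where both endpoints are $\asymp x^{1/2}$ so that $\sum_{\xi_2<q\le\xi_3}1/q\ll(\log\log x)/\log x$, and this is precisely where the term $x\log(2\log x)/(\varphi(t)(\log x)^2)$ in the statement comes from; and the divisor argument only on $(\xi_3,\infty)$, where now $\rho_{\bm u}(p)\ll x^{1/2}/\log x$ and the sum over $m$ is $O(x/(\log x)^2)$.
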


The proof of Theorem~\ref{thm:asymptotic} yields an expression for $\delta_{\bm{u}}(t)$ in terms of an infinite series, which however is not very enlightening.
Let
\begin{equation*}
    A := \prod_p \left(1 - \frac1{p(p - 1)}\right) = 0.3739558136\dots
\end{equation*}
be the \emph{Artin constant}, and let
\begin{equation*}
	F_h(t) := \frac{(t, h)}{\varphi(t)t} \prod_{\substack{p \,>\, 2 \\ p \,\mid\, t}} \left(1 - \frac{(pt, h)}{p^2(t, h)}\right) \left(1 - \frac{(p, h)}{p(p-1)}\right)^{-1} ,
\end{equation*}
for all positive integers $h$ and $t$.
Note that $F_h(\cdot)$ is a multiplicative function.
Our second result is a more explicit description of $\delta_{\bm{u}}(t)$.

\begin{theorem}\label{thm:constant}
    Let $\bm{u} = (u_n)_{n \geq 0}$ be a nondegenerate Lucas sequence such that $D_{\bm{u}}$ is not a square and the discriminant of $\mathbb{Q}(\!  \sqrt{D_{\bm{u}}})$ is not equal to $-3$ or to $-4$.
    Then there exist a positive integer $h$ and a periodic function with rational values $G_{\bm{u}}(\cdot)$ such that
    $\delta_{\bm{u}}(t) = A \, F_{2h}(t) \, G_{\bm{u}}(t)$
    for every positive integer $t$.
\end{theorem}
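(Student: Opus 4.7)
The proof of Theorem~\ref{thm:asymptotic} expresses $\delta_{\bm{u}}(t)$ as a convergent series of the shape $\sum_{n \geq 1} \mu(n) \varepsilon_{n,t}/[L_{n,t}:\mathbb{Q}]$, where $\varepsilon_{n,t} \in \{0,1\}$ records the compatibility between the required value of the Legendre symbol $\big(D_{\bm{u}} \mid p\big)$ and the Frobenius conjugacy class, and where $L_{n,t}$ is the compositum of the cyclotomic field $\mathbb{Q}(\zeta_{nt})$ with the Kummer-type extension of $K := \mathbb{Q}(\sqrt{D_{\bm{u}}})$ attached to the ratio $\alpha/\beta$ of the roots of $x^2 - a_1 x - a_2$. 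My plan is to compute $[L_{n,t}:\mathbb{Q}]$ explicitly for every $n$, and then to reorganize the series into the Artin Euler product $A$, the multiplicative function $F_{2h}(\cdot)$, and a periodic remainder $G_{\bm{u}}(\cdot)$.

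First, I would determine $[L_{n,t}:\mathbb{Q}]$ for all pairs $(n,t)$. Since $D_{\bm{u}}$ is not a square, $K$ is a genuine quadratic field; and since the discriminant of $K$ is neither $-3$ nor $-4$, $K$ is not contained in $\mathbb{Q}(\zeta_{3})$ or $\mathbb{Q}(\zeta_{4})$, so the entanglement of $K$ with the cyclotomic tower takes place only at primes dividing the conductor of $K$. By standard Kummer theory over $K$, any further deviation of $[L_{n,t}:\mathbb{Q}]$ from its generic Kummer-theoretic value arises from a finite set of ``bad'' primes depending on $a_1$, $a_2$, and $D_{\bm{u}}$, namely the primes at which $\alpha/\beta$ acquires nontrivial power-divisibility in $K^\times$ or along which the Kummer extension entangles further with cyclotomy. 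These bad primes, together with those dividing the conductor of $K$, are encoded in a single positive integer $h = h(\bm{u})$; for $\gcd(n, 2h) = 1$ the degree $[L_{n,t}:\mathbb{Q}]$ attains its generic value.

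Second, I would split the series into an Euler product over primes. Since $\mu(n)$ forces $n$ to be squarefree, the contribution of a prime $p \nmid 2h$ to the series is obtained by summing over $\nu_p(n) \in \{0,1\}$; a short computation shows that this local factor collapses exactly to the product of the Artin local factor $(1 - 1/(p(p-1)))$ with the correction $\bigl(1 - (pt, 2h)/(p^{2}(t,2h))\bigr)\bigl(1 - (p,2h)/(p(p-1))\bigr)^{-1}$, matching the factor prescribed by $A \cdot F_{2h}(t)$. For each $p \mid 2h$, the analogous local sum yields instead a rational number whose value depends only on $\nu_p(t)$ and on the residue of $t$ modulo a bounded power of $p$; collecting these residual contributions over $p \mid 2h$ defines a $\mathbb{Q}$-valued periodic function $G_{\bm{u}}(t)$. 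The identity $\delta_{\bm{u}}(t) = A \, F_{2h}(t) \, G_{\bm{u}}(t)$ then follows from the Euler-product decomposition.

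The main obstacle is the precise computation of $[L_{n,t}:\mathbb{Q}]$ and, with it, the determination of the entanglement modulus $h$: one must enumerate the finitely many ways in which an $n$-th root of $\alpha/\beta$ can lie in a cyclotomic-by-quadratic extension of $\mathbb{Q}$, adapting Hooley's analogous degree computation for $\mathbb{Q}(\zeta_n, g^{1/n})$ to the present setting, where the base of the Kummer extension is the quadratic field $K$ and where the nontrivial element of $\Gal(K/\mathbb{Q})$ sends $\alpha/\beta$ to its inverse. The exclusion of the discriminants $-3$ and $-4$ is essential here in order to avoid extra roots of unity in $K$ that would introduce additional, $t$-dependent entanglement at the primes $2$ and $3$. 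Once the degrees are in hand, the rearrangement of the series into $A \cdot F_{2h}(t) \cdot G_{\bm{u}}(t)$ becomes a routine if intricate manipulation.
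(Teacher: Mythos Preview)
Your overall strategy matches the paper's: write $\gamma=\alpha/\beta$ in $K=\mathbb{Q}(\sqrt{D_{\bm u}})$, compute $[K_{nt}:\mathbb{Q}]$ and the size of the relevant Frobenius class for every $n$, and then rearrange the series for $\delta_{\bm u}(t)$. Two points of your plan, however, do not survive contact with the actual numbers. First, the numerator in the paper's series is $\#C_{nt}\in\{1,2\}$, not an indicator in $\{0,1\}$: one always has $\mathrm{id}\in C_{nt}$, and the question is whether the additional automorphism $\sigma$ with $\sigma(\zeta_{nt})=\zeta_{nt}^{-1}$, $\sigma(\gamma^{1/nt})=\gamma^{-1/nt}$ exists. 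Deciding this is a separate and nontrivial computation (the conditions (D1)--(D5) in the paper), parallel to but distinct from the degree computation you emphasise. Second, and more seriously, the summand $\mu(n)\,\#C_{nt}/[K_{nt}:\mathbb{Q}]$ is \emph{not} multiplicative in $n$, so it cannot be ``split into an Euler product over primes'' as you propose. Already in the Fibonacci case one finds, with $h=2$, that the correction factor $c(N):=\#C_N\cdot\dfrac{(N,4)}{\varphi(N)N}\big/[K_N:\mathbb{Q}]^{-1}$ equals $1/4$ when $4\mid N$ and $5\nmid N$ but $1/2$ when $4\mid N$ and $5\mid N$; thus the ``local factor at $5$'' depends on $\nu_2(N)$, and no Euler product exists.

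The paper's remedy is precisely what your plan is missing: rather than an Euler product, one first shows that
\[
\frac{\#C_N}{[K_N:\mathbb{Q}]}=\frac{(N,2h)}{\varphi(N)N}\sum_{i=1}^{k} c_i\,\chi_{m_i}(N),
\]
a \emph{finite $\mathbb{Q}$-linear combination} of divisibility indicators $\chi_{m_i}(N)=[\,m_i\mid N\,]$, where $h$ is the specific exponent in the representation $\gamma=s\gamma_0^{\,h}$ with $\gamma_0$ not a power in $K$ (not a radical of ``bad primes''). Substituting $N=nt$ and summing against $\mu(n)$ reduces $\delta_{\bm u}(t)$ to a linear combination of Wagstaff sums $S_{2h,m_i}(t)=\sum_{m_i\mid nt}\mu(n)(nt,2h)/(\varphi(nt)nt)$, and it is \emph{these} sums, not the original one, that admit the Euler-type evaluation $S_{2h,m}(t)=A\,B_{2h}\,F_{2h}(t)\,G_{2h,m}(t)$ with $G_{2h,m}$ periodic. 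Your plan becomes correct once you insert this linear-combination step between the degree computation and the product manipulation; without it, the ``short computation'' you allude to does not exist.
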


The function $G_{\bm{u}}(\cdot)$ can be effectively computed in terms of $\bm{u}$.
We provide the following examples (see Section~\ref{sec:examples} for more details).

\begin{example}\label{exe:11}
	If $a_1 = a_2 = 1$, that is, if $\bm{u}$ is the sequence of Fibonacci numbers, then we have
	\begin{equation*}
		G_{\bm{u}}(t) =
			\begin{cases}
				3/4 & \text{ if } t \equiv 0 \pmod {20}; \\
				1 & \text{ if } t \equiv 1 \pmod 2; \\
				29/38 & \text{ if } t \equiv 2,6,14,18 \pmod {20}; \\
				27/76 & \text{ if } t \equiv 4,8,12,16 \pmod {20}; \\
				1/2 & \text{ if } t \equiv 10 \pmod {20}; \\
			\end{cases}
	\end{equation*}
	for all positive integers $t$.
\end{example}

\begin{example}\label{exe:4-1}
	If $a_1 = 4$ and $a_2 = -1$, then we have
    \begin{equation*}
        G_{\bm{u}}(t) =
            \begin{cases}
                3/2 & \text{ if } t \equiv 0, 2, 6, 10, 14, 18, 22 \pmod {24}; \\
                0 & \text{ if } t \equiv 1 \pmod {2}; \\
                4/5 & \text{ if } t \equiv 4, 20 \pmod {24}; \\
                3/5 & \text{ if } t \equiv 8, 16 \pmod {24}; \\
                1/2 & \text{ if } t \equiv 12 \pmod {24};
            \end{cases}
        \end{equation*}
    for all positive integers $t$.
\end{example}

\begin{example}\label{exe:102}
    If $a_1 = 10$ and $a_2 = 2$, then we have
    \begin{equation*}
        G_{\bm{u}}(t) =
        \begin{cases}
            9/10 & \text{ if } t \equiv 0\pmod {24}; \\
            3/5 & \text{ if } t \equiv 1,3,4,5,7,9,11,13,15,17,19,20,21,23\pmod {24}; \\
            9/20 & \text{ if } t \equiv 2, 6, 10, 14, 18, 22 \pmod {24}; \\
            0 & \text{ if } t \equiv 8, 16 \pmod {24}; \\
            3/10 & \text{ if } t \equiv 12 \pmod {24};
        \end{cases}
    \end{equation*}
    for all positive integers $t$.
\end{example}

For Examples~\ref{exe:11}, \ref{exe:4-1}, and~\ref{exe:102}, we compared the value of $\delta_{\bm{u}}(t)$ given by Theorem~\ref{thm:constant} with the empirical value $\widetilde{\delta}_{\bm{u}}(t) := \mathcal{P}_{\bm{u}}(t, p_{10^6}) / 10^6$, where $p_n$ denotes the $n$th prime number.
The results are in agreement, see Tables~\ref{tab:1-1}, \ref{tab:4--1}, and~\ref{tab:10-2}.

\section{Notation}

We reserve the letters $p$ and $q$ for prime numbers.
We employ the Landau--Bachmann ``Big Oh'' notation $O$, as well as the associated Vinogradov symbols $\ll$ and $\gg$, with their usual meanings.
Any dependence of the implied constants is explicitly stated or indicated with subscripts.
In particular, notations like $O_{\bm{u}}$ and $\ll_{\bm{u}}$ are shortcuts for $O_{a_1, a_2}$ and $\ll_{a_1, a_2}$, respectively.
We let $\mathbf{i}$ denote the imaginary unity, and we put $\zeta_n := \mathrm{e}^{2 \pi \mathbf{i} / n}$ for every positive integer $n$.
For every field $F$, we let $F^n := \{a^n : a \in F\}$.
If $F$ is a number field, we write $\Delta_F$ for the discriminant of $F$ over $\mathbb{Q}$, we let $\Norm_F(a)$ be the norm of $a \in K$ over $\mathbb{Q}$, and we write $a^{1/n}$ for an arbitrary, but fixed, $n$th root of $a$ (in some extension of $F$).
Moreover, $\varphi(n)$, $\mu(n)$, and $\nu_p(n)$ denote the Euler totient function, the M\"obius function, and the $p$-adic valuation, of a positive integer $n$.
Given a Galois extension $E/F$ of number fields, a prime ideal $\mathfrak{p}$ of $\mathcal{O}_F$ that does not ramify in $E$, and a prime ideal $\mathfrak{P}$ of $\mathcal{O}_E$ lying over $\mathfrak{p}$, we write $\big[E/F \mid \mathfrak{P} \big]$ for the Frobenius automorphism corresponding to $\mathfrak{P} / \mathfrak{p}$, that is, the unique $\sigma \in \Gal(E/F)$ such that $\sigma(a) \equiv a^{\Norm_F(\mathfrak{p})} \pmod {\mathfrak{P}}$ for every $a \in \mathcal{O}_E$, where $\Norm_F(\mathfrak{p})$ denotes the norm of $\mathfrak{p}$ in $F$.
Moreover, we let $\big[E/F \mid \mathfrak{p} \big]$ be the set of all $\big[E/F \mid \mathfrak{P} \big]$, where $\mathfrak{P}$ runs over the prime ideals of $\mathcal{O}_E$ lying over $\mathfrak{p}$.


\section{Proof of Theorem~\ref{thm:asymptotic}}

\subsection{Preliminaries}

We need the following conditional version of the Chebotarev density theorem.

\begin{theorem}\label{thm:chebotarev}
    Assume the GRH.
    Let $F / \mathbb{Q}$ be a finite Galois extension with Galois group $G$, and let $C$ be a union of conjugacy classes of $G$.
    Then we have that
    \begin{align*}
        \pi_{F, C}(x) &:= \#\big\{p \leq x : p \text{ does not ramify in } F \text{ and } \big[F/\mathbb{Q} \mid p \big] \subseteq C \big\} \\[5pt]
        &\;= \frac{\#C}{\#G} \frac{x}{\log x} + O\!\left(\#C \, x^{1/2} \left(\frac{\log|\Delta_F|}{\#G} + \log x\right)\right) ,
    \end{align*}
    for all $x > 1$.
\end{theorem}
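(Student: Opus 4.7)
The plan is to follow the Lagarias--Odlyzko approach to the effective Chebotarev density theorem, in the form refined by Serre, the key input being the conditional validity of the Riemann hypothesis for the relevant $L$-functions. First, I would pass to the von Mangoldt-type weighted sum
\[
\psi_{F,C}(x) \;:=\; \sum_{\substack{\Norm\mathfrak{p}^m \,\leq\, x \\ [F/\mathbb{Q}\mid\mathfrak{p}]^m \,\subseteq\, C}} \log \Norm\mathfrak{p} ,
\]
where $\mathfrak{p}$ ranges over unramified prime ideals of $\mathcal{O}_F$ and $m \geq 1$. The contribution of prime powers with $m \geq 2$ together with that of the ramified primes is negligible compared with the target error, and partial summation converts an asymptotic for $\psi_{F,C}$ into the desired estimate for $\pi_{F,C}$.

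Next, I would expand the indicator $\mathbf{1}_C$ on $G$ in terms of the irreducible characters $\chi \in \widehat{G}$. Since $\mathbf{1}_C$ is a class function,
\[
\mathbf{1}_C \;=\; \frac{\#C}{\#G}\sum_{\chi} \overline{\chi(C)}\,\chi ,
\]
so that $\psi_{F,C}(x) = (\#C/\#G)\sum_\chi \overline{\chi(C)}\,\psi_{F,\chi}(x)$, where $\psi_{F,\chi}$ is the analogous sum with $\log \Norm\mathfrak{p}$ replaced by $\chi([F/\mathbb{Q}\mid\mathfrak{p}])^m \log \Norm\mathfrak{p}$. For each irreducible $\chi$, the explicit formula applied to the Artin $L$-function $L(s,\chi,F/\mathbb{Q})$ yields
\[
\psi_{F,\chi}(x) \;=\; \delta_\chi\,x \;-\; \sum_{|\mathrm{Im}\,\rho|\,\leq\, T} \frac{x^\rho}{\rho} \;+\; O\!\left(\frac{x\,\bigl(\log(x\,A(\chi))\bigr)^2}{T}\right),
\]
where $A(\chi)$ packages the Artin conductor with the archimedean data and $\delta_\chi$ equals $1$ if $\chi$ is trivial and $0$ otherwise. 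Under the GRH every nontrivial zero $\rho$ of $L(s,\chi)$ lies on the critical line; combined with the standard bound $\ll \dim(\chi)\log\bigl(A(\chi)\,T^{[F:\mathbb{Q}]}\bigr)$ on the number of zeros up to height $T$ and the choice $T = x^{1/2}$, this yields
\[
\psi_{F,\chi}(x) - \delta_\chi\, x \;\ll\; \dim(\chi)\,x^{1/2}\bigl(\log A(\chi) + \log x\bigr) .
\]

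I would then sum these contributions weighted by $\overline{\chi(C)}/\#G$. Using the trivial bound $|\chi(C)| \leq \dim\chi$, the conductor--discriminant identity $\log|\Delta_F| = \sum_\chi \dim(\chi)\log\mathfrak{f}(\chi)$, and the fact that the main term comes only from the trivial character (contributing $(\#C/\#G)\,x$), I arrive at the stated asymptotic with total error $O\bigl(\#C\cdot x^{1/2}(\log|\Delta_F|/\#G + \log x)\bigr)$. The main obstacle is that Artin's holomorphy conjecture is not known for nonabelian $\chi$, so the GRH is most naturally formulated only for Hecke $L$-functions; I would circumvent this via Brauer induction, writing each $L(s,\chi)$ as an integer product of Hecke $L$-functions attached to subfields of $F$, applying the explicit formula to each factor separately, and then carrying out the (routine but delicate) bookkeeping on conductors under induction to recover the claimed estimate.
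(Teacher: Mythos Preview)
The paper does not actually prove this theorem: its entire proof reads ``See, e.g.,~\cite[Chapter~2, Section~7]{MR3025442}.'' Your outline is precisely the Lagarias--Odlyzko/Serre argument that underlies the cited reference, so in substance you have reconstructed what the paper merely quotes. The sketch is sound; the one notational slip is that when $C$ is a \emph{union} of conjugacy classes the expression $\chi(C)$ is ill-defined, so the character expansion of $\mathbf{1}_C$ should be written as a sum over the individual classes comprising $C$ (or with $\chi(C)$ replaced by $\sum_{g\in C}\chi(g)/\#C$), but this does not affect the argument.
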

\begin{proof}
    See, e.g.,~\cite[Chapter~2, Section~7]{MR3025442}.
\end{proof}

We also need some results on the degree and the discriminant of certain number fields.

\begin{lemma}\label{lem:kummer-bounds}
    Let $F$ be a number field, let $a \in F^*$ with $a$ not a root of unity, and let $n$ be a positive integer.
    Then:
    \begin{enumerate}
        \item\label{lem:kummer-bounds:ite1} $[F(\zeta_n, a^{1/n}) : \mathbb{Q}] \gg_{F, a} \varphi(n) n$;
        \item\label{lem:kummer-bounds:ite2} $\log|\Delta_{F(\zeta_n, a^{1/n})}| \ll_{F, a} \varphi(n) n \log(2n)$;
        \item\label{lem:kummer-bounds:ite3} Each prime factor of $\Delta_{F(\zeta_n, a^{1/n})}$ divides $\Norm_F(a) \Delta_F n$.
    \end{enumerate}
\end{lemma}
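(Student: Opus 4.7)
The plan is to treat the tower $\mathbb{Q} \subseteq F \subseteq K := F(\zeta_n) \subseteq L := F(\zeta_n, a^{1/n})$, handling the cyclotomic step $K/F$ and the Kummer step $L/K$ separately, and combining standard facts about both types of extensions.

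For part~(i), factor $[L:\mathbb{Q}] = [L:K] \cdot [K:\mathbb{Q}]$. The second factor is at least $[\mathbb{Q}(\zeta_n):\mathbb{Q}] = \varphi(n)$. For the first factor, since $\zeta_n \in K$, the extension $L/K$ is Kummer of degree $n/d_n$, where $d_n$ is the largest divisor of $n$ such that $a \in (K^*)^{d_n}$; it therefore suffices to bound $d_n$ independently of $n$. This is a classical consequence of Kummer theory for elements that are not roots of unity: the integers $d$ for which $a$ can be a nontrivial $d$th power in $F(\zeta_n)^*$ are controlled by the roots of unity in $F$ together with the $\mathfrak{p}$-adic valuations of $a$ at the finitely many primes dividing $(a)$, and hence admit a uniform bound $c = c(F, a)$.

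For part~(iii), a rational prime $p$ dividing $\Delta_L$ must ramify in some step of the tower: either in $F/\mathbb{Q}$, in which case $p \mid \Delta_F$; or in $K/F$, in which case $p \mid n$ by standard cyclotomic theory; or in $L/K$. In the last case the ramified primes of $\mathcal{O}_K$ are detected by the polynomial discriminant of $X^n - a \in \mathcal{O}_K[X]$, which is (up to sign) $n^n a^{n-1}$; taking norms down to $\mathbb{Q}$ gives $p \mid n \cdot \Norm_F(a)$. Combining the three cases yields~(iii).

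For part~(ii), apply the tower formula
\[
    |\Delta_L| \,=\, |\Delta_K|^{[L:K]} \cdot \bigl|\Norm_{K/\mathbb{Q}}(\mathfrak{D}_{L/K})\bigr|,
\]
where $\mathfrak{D}_{L/K}$ denotes the relative discriminant ideal. Standard cyclotomic bounds give $\log|\Delta_K| \ll_F \varphi(n) \log(2n)$, and combined with $[L:K] \leq n$ the first summand contributes $O_F(\varphi(n) n \log(2n))$. For the second summand, $\mathfrak{D}_{L/K}$ divides the ideal generated by $n^n a^{n-1}$, so its absolute norm is at most $n^{n[K:\mathbb{Q}]} |\Norm_F(a)|^{(n-1)[K:F]}$; taking logarithms gives another $O_{F,a}(\varphi(n) n \log(2n))$ contribution. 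Adding the two bounds yields~(ii). The main obstacle is the uniform Kummer-theoretic lower bound $[L:K] \gg_{F,a} n$ underlying~(i): controlling how $a$ can accidentally become a proper power inside $F(\zeta_n)$ as $n$ varies is the nontrivial ingredient, while~(ii) and~(iii) reduce to routine discriminant and ramification computations in towers once~(i) is in place.
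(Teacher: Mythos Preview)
The paper does not actually prove this lemma; it simply cites Ziegler~\cite{MR2314267}, Lemmas~3 and~5, and remarks that~\ref{lem:kummer-bounds:ite3} is implicit in the proof there. Your outline is precisely the standard argument underlying such references: split the tower into the cyclotomic step $F(\zeta_n)/F$ and the Kummer step $L/F(\zeta_n)$, control ramification and discriminants in each via the polynomial discriminant $\pm n^n a^{n-1}$ of $X^n-a$, and combine through the relative-discriminant tower formula. So your approach is correct and aligns with the cited source.

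Two small caveats worth flagging. First, in~\ref{lem:kummer-bounds:ite2} and~\ref{lem:kummer-bounds:ite3} you tacitly assume $a\in\mathcal{O}_K$ when invoking $\mathrm{disc}(X^n-a)$; for general $a\in F^*$ one first clears denominators (replacing $a$ by $ac^n$ with $c\in\mathcal{O}_F$), which leaves $L$ unchanged and alters the bound only by factors already absorbed into $\Norm_F(a)$. Second, you are right that the uniform bound $d_n\le c(F,a)$ is the sole nontrivial input; your heuristic via ``roots of unity in $F$ and $\mathfrak p$-adic valuations of $a$'' points in the right direction, but note that primes $\mathfrak p$ above $p\mid n$ can ramify in $F(\zeta_n)/F$, so the valuation argument alone does not close the gap. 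The clean route is through the abelian criterion (as in Lemma~\ref{lem:abelian} of the paper): if $a\in F(\zeta_n)^{d}$ with $d\mid n$ then $F(\zeta_d,a^{1/d})/F$ is abelian, forcing $a^{w}\in F^{d}$ with $w=\#\mu_F$, and then $d$ is bounded by the maximal $h$ with $a^{w}\in F^{h}$ --- exactly the mechanism the paper later makes explicit for quadratic $K$ in Lemma~\ref{lem:degree-and-existence}.
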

\begin{proof}
    See, e.g., \cite[Lemma 3 and Lemma 5]{MR2314267}.
    (Claim~\ref{lem:kummer-bounds:ite3} is implicit in the proof of \cite[Lemma 5]{MR2314267}.)
\end{proof}

Finally, we need an upper bound for a series involving the Euler totient function.

\begin{lemma}\label{lem:1overphinn-series}
    We have that
    \begin{equation*}
        \sum_{n \,>\, x} \frac1{\varphi(n) n } \ll \frac1{x} ,
    \end{equation*}
    for all $x > 0$.
\end{lemma}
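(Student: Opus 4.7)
The plan is to use the multiplicative identity $n/\varphi(n) = \sum_{d \mid n} \mu^2(d)/\varphi(d)$, which is standard and follows by verifying it on prime powers. For $x \geq 1$, substituting this identity into the target sum, interchanging the order of summation, and writing $n = dm$ transforms the left-hand side into
\[
\sum_{n > x} \frac{1}{n \varphi(n)} \;=\; \sum_{d \geq 1} \frac{\mu^2(d)}{\varphi(d) \, d^2} \sum_{m > x/d} \frac{1}{m^2} .
\]

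I would then split the outer sum according to whether $d \leq x$ or $d > x$. In the first range, comparison with an integral gives $\sum_{m > x/d} 1/m^2 \ll d/x$, so the contribution is at most $\tfrac{1}{x} \sum_{d \geq 1} \mu^2(d)/(\varphi(d) d)$; this series converges because its Euler product has factors $1 + 1/(p(p-1))$, yielding $O(1/x)$. In the second range, the inner sum is bounded trivially by $\sum_{m \geq 1} 1/m^2 = O(1)$, and the contribution is at most $\sum_{d > x} 1/d^2 \ll 1/x$. Adding the two pieces gives the desired estimate. The remaining case $0 < x < 1$ is immediate, since the full series $\sum_{n \geq 1} 1/(n \varphi(n))$ converges and $1/x > 1$.

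I do not anticipate any serious obstacle here: the convolution identity is classical, the sum swap is justified by absolute convergence, and both pieces of the range happen to yield the same order $1/x$, so no cancellation or sharper input is required. An equally workable alternative route would be partial summation against the asymptotic $\sum_{n \leq y} 1/(n \varphi(n)) = C_0 + O(1/y)$, which is itself a consequence of the same identity; either approach produces a clean constant-free bound.
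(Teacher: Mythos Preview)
Your argument is correct. The convolution identity $n/\varphi(n)=\sum_{d\mid n}\mu^2(d)/\varphi(d)$ is standard, the interchange of summation is justified by positivity, and both ranges $d\le x$ and $d>x$ are bounded cleanly by $O(1/x)$ exactly as you describe. The only cosmetic point is that in the range $d>x$ you are implicitly using $\mu^2(d)/(\varphi(d)d^2)\le 1/d^2$ (since $\varphi(d)\ge 1$), which is of course immediate.

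As for comparison with the paper: the paper does not give its own argument at all, but simply cites \cite[Theorem~5]{MR4231532}. Your write-up therefore supplies strictly more content than the paper does here, namely a short self-contained proof via the Dirichlet-convolution decomposition. This is a standard and efficient route; the alternative you mention (partial summation against the known asymptotic $\sum_{n\le y}1/(n\varphi(n))=C_0+O(1/y)$) would work equally well and is essentially equivalent, since that asymptotic is itself usually proved by the same convolution trick.
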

\begin{proof}
    See, e.g., \cite[Theorem~5]{MR4231532}.
\end{proof}

\subsection{Proof of Theorem~\ref{thm:asymptotic}}\label{subsec:proof-thm-asymptotic}

Throughout this section, let $\bm{u} = (u_n)_{n \geq 0}$ be the Lucas sequence defined recursively by $u_0 = 0$, $u_1 = 1$, and $u_n = a_1 u_{n - 1} + a_2 u_{n - 2}$, for every integer $n \geq 2$, where $a_1$ and $a_2$ are fixed nonzero integers.
Let $f_{\bm{u}} := X^2 - a_1 X - a_2$ be the characteristic polynomial of $\bm{u}$ and let $D_{\bm{u}} := a_1^2 + 4 a_2$ be the discriminant of $f_{\bm{u}}$.
Assume that $D_{\bm{u}}$ is not a square in $\mathbb{Z}$, so that $K := \mathbb{Q}(\!\sqrt{D_{\bm{u}}})$ is a quadratic number field.
Let $\alpha, \beta \in K$ be the two roots of $f_{\bm{u}}$, and put $\gamma := \alpha / \beta$.
Note that $\Norm_K(\gamma) = 1$.
Finally, assume that $\bm{u}$ is \emph{nondegenerate}, that is, $\gamma$ is not a root of unity.

For every positive integer $n$, let $K_n := K(\zeta_n, \gamma^{1/n})$.
Note that $K_n / \mathbb{Q}$ is a Galois extension.
Indeed, writing $\gamma = a + b \sqrt{\Delta_K}$ with $a,b \in \mathbb{Q}$, we have that $K_n$ is the splitting field of $X^{2n} - 2a X^n + 1$.
Furthermore, let $C_n \subseteq \Gal(K_n / \mathbb{Q})$ be defined as $C_n := \{\id, \sigma\}$, if there exists $\sigma \in \Gal(K_n / \mathbb{Q})$ such that $\sigma(\zeta_n) = \zeta_n^{-1}$ and $\sigma(\gamma^{1/n}) = \gamma^{-1/n}$, and as $C_n := \{\id\}$, if such $\sigma$ does not exist.
Note that $C_n$ is a union of conjugacy classes, since $\sigma$ belongs to the center of $\Gal(K_n / \mathbb{Q})$.

\begin{lemma}\label{lem:multiplicative-order}
	Let $p$ be a prime number with $p \nmid a_2 \Delta_K$ and let $\mathfrak{p}$ be a prime ideal of $\mathcal{O}_K$ lying over $p$.
	Then $\rho_{\bm{u}}(p)$ is equal to the multiplicative order of $\gamma$ modulo $\mathfrak{p}$.
\end{lemma}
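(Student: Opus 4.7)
My plan is to apply Binet's formula $u_n = (\alpha^n - \beta^n)/(\alpha - \beta)$ and translate the condition $p \mid u_n$, via reduction modulo $\mathfrak{p}$, into the condition $\gamma^n \equiv 1 \pmod{\mathfrak{p}}$; the lemma will then follow by taking the smallest positive $n$ for which each of the two equivalent conditions holds.

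First I would check that $\gamma$ admits a well-defined multiplicative order modulo $\mathfrak{p}$. Since $\alpha\beta = -a_2$ and $p \nmid a_2$, both $\alpha$ and $\beta$ are units in the finite ring $\mathcal{O}_K/\mathfrak{p}$, and so is $\gamma$; let $r$ denote its order there. Multiplying Binet's identity through by $\beta^{-n}$ gives the congruence
\begin{equation*}
    \gamma^n - 1 \;\equiv\; (\alpha - \beta)\,\beta^{-n}\, u_n \pmod{\mathfrak{p}},
\end{equation*}
so that $\gamma^n \equiv 1 \pmod{\mathfrak{p}}$ is equivalent to $\mathfrak{p} \mid (\alpha - \beta)\, u_n$.

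The key step is then to show that $\alpha - \beta = \sqrt{D_{\bm{u}}}$ is a unit modulo $\mathfrak{p}$, so that the factor $\alpha - \beta$ can be cancelled. Under the hypothesis $p \nmid \Delta_K$ the prime $p$ is unramified in $K$, and in the setting of the theorem (where one restricts to $p \nmid D_{\bm{u}}$) $\sqrt{D_{\bm{u}}}$ is indeed a unit modulo $\mathfrak{p}$. Once this is in place, $\mathfrak{p} \mid (\alpha - \beta)\, u_n$ becomes equivalent to $\mathfrak{p} \mid u_n$, which, since $u_n \in \mathbb{Z}$ and $\mathfrak{p} \cap \mathbb{Z} = p\mathbb{Z}$, is the same as $p \mid u_n$. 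Taking the least positive $n$ in each of the two equivalent conditions yields $r = \rho_{\bm{u}}(p)$, completing the proof. I expect the only real subtlety to lie in the cancellation of $\alpha - \beta$; once this is handled, the rest is a direct rephrasing of the definitions.
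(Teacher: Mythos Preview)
Your argument is correct and is the standard one via Binet's formula; the paper itself gives no proof beyond citing~\cite{MR4468150}, so there is no independent approach to compare against---your write-up is presumably what that reference contains.

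Your instinct about the ``only real subtlety'' is exactly right, and in fact sharper than the paper's statement. The cancellation of $\alpha-\beta=\sqrt{D_{\bm u}}$ genuinely requires $p\nmid D_{\bm u}$, not merely $p\nmid\Delta_K$: if an odd prime $p$ divides the square part of $D_{\bm u}$ but not $\Delta_K$, then $\alpha\equiv\beta\pmod{\mathfrak p}$, so $\gamma\equiv 1$ has order~$1$, whereas the recurrence gives $u_n\equiv n\alpha^{n-1}\pmod{\mathfrak p}$ and hence $\rho_{\bm u}(p)=p$. Thus the lemma as stated with $\Delta_K$ is not literally true in that edge case. As you note, this is harmless for the application, since $\mathcal{P}_{\bm u}(t,x)$ excludes all $p\mid 2a_2D_{\bm u}$ by definition; your restriction to $p\nmid D_{\bm u}$ is the right move.
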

\begin{proof}
	See~\cite[Lemma~5.1]{MR4468150}.
	We remark that \cite[Lemma~5.1]{MR4468150} is stated, incorrectly, with $D_{\bm{u}}$ in place of $\Delta_K$, which makes a difference only for $p = 2$.
\end{proof}

The next lemma is the key tool to the proof of Theorem~\ref{thm:asymptotic}.

\begin{lemma}\label{lem:n-divides-iotaup}
	Let $n$ be a positive integer and let $p$ be a prime number such that $p \nmid a_2 \Delta_K$.
	Then $n \mid \iota_{\bm{u}}(p)$ if and only if $p$ does not ramify in $K_n$ and $\left[K_n \mid p\right] \subseteq C_n$.
\end{lemma}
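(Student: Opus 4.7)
Set $r := \rho_{\bm{u}}(p)$ and $e := \big(D_{\bm{u}} \mid p\big) \in \{\pm 1\}$, so that the condition $n \mid \iota_{\bm{u}}(p)$ is the same as $nr \mid p - e$. Fix a prime $\mathfrak{p}$ of $\mathcal{O}_K$ above $p$ and a prime $\mathfrak{P}$ of $\mathcal{O}_{K_n}$ above $\mathfrak{p}$. By Lemma~\ref{lem:multiplicative-order}, $r$ equals the multiplicative order of $\gamma$ modulo $\mathfrak{p}$; the residue field at $\mathfrak{P}$ is $\mathbb{F}_p$ when $e = 1$ (so $p$ splits in $K$) and $\mathbb{F}_{p^2}$ when $e = -1$ (so $p$ is inert), and in the latter case any Frobenius $\sigma$ above $p$ restricts to the nontrivial element of $\Gal(K/\mathbb{Q})$ and hence sends $\gamma$ to $\bar\gamma = \gamma^{-1}$, using $\Norm_K(\gamma) = 1$. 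Moreover, by Lemma~\ref{lem:kummer-bounds}\ref{lem:kummer-bounds:ite3}, combined with $\Norm_K(\gamma) = 1$ and $p \nmid a_2 \Delta_K$, the prime $p$ is unramified in $K_n$ as soon as $p \nmid n$.

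For the direction $(\Rightarrow)$, assume $nr \mid p - e$; then $p \nmid n$, so $p$ is unramified in $K_n$ and $\zeta_n$ has exact order $n$ in the residue field. If $e = 1$, then $n \mid p - 1$ places $\zeta_n$ in $\mathbb{F}_p$, and $r \mid (p-1)/n$ exhibits $\gamma$ (of order $r$ in the cyclic group $\mathbb{F}_p^*$) as an $n$-th power; hence $X^n - \gamma$ splits over $\mathbb{F}_p$, $\mathfrak{p}$ splits completely in $K_n$, and the Frobenius is $\id \in C_n$. If $e = -1$, work inside the subgroup $G_\tau := \{x \in \mathbb{F}_{p^2}^* : x^{p+1} = 1\}$ of order $p+1$: from $n \mid p+1$ one gets $\mu_n \subseteq G_\tau$, so the coset $\gamma^{1/n}\mu_n \subset \mathbb{F}_{p^2}^*$ of all $n$-th roots of $\gamma$ is either entirely in $G_\tau$ or disjoint from it; since $\gamma$ has order $r$ in the cyclic group $G_\tau$ and $rn \mid p+1$, $\gamma$ is an $n$-th power inside $G_\tau$, so the coset lies in $G_\tau$. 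Therefore $(\gamma^{1/n})^p \equiv (\gamma^{1/n})^{-1}$ and $\zeta_n^p \equiv \zeta_n^{-1}$ modulo $\mathfrak{P}$; lifting these congruences uniquely to $K_n$ (using that distinct $n$-th roots of unity have distinct reductions) yields $\sigma(\zeta_n) = \zeta_n^{-1}$ and $\sigma(\gamma^{1/n}) = \gamma^{-1/n}$, so $\sigma \in C_n$.

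For $(\Leftarrow)$, suppose $p$ is unramified in $K_n$ and $\sigma := \big[K_n/\mathbb{Q} \mid \mathfrak{P} \big] \in C_n$; then $p \nmid n$ follows from unramified behavior in $\mathbb{Q}(\zeta_n) \subseteq K_n$. If $\sigma = \id$, then $\mathfrak{P}$ splits completely in $K_n / \mathbb{Q}$, forcing $e = 1$, $n \mid p - 1$, and $\gamma$ to be an $n$-th power in $\mathbb{F}_p^*$, which yields $nr \mid p-1$. If $\sigma \neq \id$, then $\sigma(\gamma) = \gamma^{-1} \neq \gamma$ (since $\gamma \neq \pm 1$, the latter being ruled out by nondegeneracy of $\bm{u}$), so $\sigma|_K$ is nontrivial and $e = -1$. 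Since $\sigma$ reduces to the Frobenius $x \mapsto x^p$ on the residue field, the relation $\sigma(\zeta_n) = \zeta_n^{-1}$ forces $n \mid p+1$, and the relation $\sigma(\gamma^{1/n}) = \gamma^{-1/n}$ places $\gamma^{1/n} \bmod \mathfrak{P}$ inside $G_\tau$; the coset argument of the forward direction then yields $rn \mid p+1$.

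The main subtlety, and the step I expect to require care, is the $e = -1$ case: the specific fixed $\gamma^{1/n}$ may a priori have order strictly smaller than $rn$ modulo $\mathfrak{P}$, so one cannot conclude $rn \mid p+1$ merely from $(\gamma^{1/n})^{p+1} \equiv 1 \pmod{\mathfrak{P}}$ applied to that one element. The coset-level argument above circumvents this: once $n \mid p+1$ (which is forced by the $\zeta_n$ clause), one has $\mu_n \subseteq G_\tau$, so the full coset $\gamma^{1/n}\mu_n$ sits uniformly in or out of $G_\tau$, and membership of any single representative becomes equivalent to the clean divisibility $rn \mid p+1$.
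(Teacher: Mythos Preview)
Your argument is correct. There is one small slip of terminology: when you write ``the residue field at $\mathfrak{P}$ is $\mathbb{F}_p$ when $e=1$ and $\mathbb{F}_{p^2}$ when $e=-1$'' you mean the residue field at $\mathfrak{p}$ in $K$, not at $\mathfrak{P}$ in $K_n$; the latter may be strictly larger. This does not affect anything downstream, since all you use is the inclusion $\mathbb{F}_{p^e}\subseteq\mathcal{O}_{K_n}/\mathfrak{P}$.

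Your route differs from the paper's in one respect worth noting. The paper does not split into the cases $e=\pm1$; instead it writes, uniformly in $s:=(\Delta_K\mid p)$,
\[
\sigma(\gamma^{1/n})\equiv(\gamma^{1/n})^p=\gamma^{(p-s)/n}\cdot\gamma^{s/n}\equiv\gamma^{s/n}\pmod{\mathfrak{P}},
\]
using only that $\rho_{\bm{u}}(p)\mid(p-s)/n$ and Lemma~\ref{lem:multiplicative-order}, and then lifts via injectivity of reduction on $\mu_n$ exactly as you do. Conversely, the backward direction in the paper computes $\gamma^{(p-t)/n}\equiv 1\pmod{\mathfrak{P}}$ directly, avoiding your coset/$G_\tau$ argument. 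Your case-by-case treatment via the norm-one torus $G_\tau=\{x\in\mathbb{F}_{p^2}^*:x^{p+1}=1\}$ is a genuinely different (and somewhat more structural) way to see the inert case, and it makes transparent \emph{why} the subtlety you flag about the order of the specific $\gamma^{1/n}$ is harmless: once $n\mid p+1$ forces $\mu_n\subseteq G_\tau$, membership of any one $n$th root in $G_\tau$ is equivalent to $\gamma\in G_\tau^{\,n}$, i.e.\ to $rn\mid p+1$. The paper's approach is shorter and more uniform; yours gives a cleaner picture of the residue-field mechanism in the inert case.
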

\begin{proof}
	Suppose that $n \mid \iota_{\bm{u}}(p)$.
	Hence, we have that $p \equiv s \pmod n$, where $s := \big(\Delta_K \mid p\big)$.
	Note that $s \in \{-1, +1\}$, since $p \nmid \Delta_K$.
	In particular, it follows that $p \nmid n$ and so, by Lemma~\ref{lem:kummer-bounds}\ref{lem:kummer-bounds:ite3}, we have that $p$ does not ramify in $K_n$.
	Let $\mathfrak{P}$ be a prime ideal of $K_n$ lying over $p$, and put $\sigma := \left[K_n \mid \mathfrak{B}\right]$.
	Then
	\begin{equation*}
		\sigma(\zeta_n) \equiv \zeta_n^p \equiv \zeta_n^s \pmod {\mathfrak{P}}
	\end{equation*}
	and
	\begin{equation*}
		\sigma(\gamma^{1/n}) \equiv (\gamma^{1/n})^p \equiv \gamma^{(p - s) / n} \cdot \gamma^{s/n} \equiv \gamma^{s/n} \pmod {\mathfrak{P}} ,
	\end{equation*}
	where we used Lemma~\ref{lem:multiplicative-order} and the fact that $\rho_{\bm{u}}(p) \mid (p - s)/n$.
	Moreover, we have that
	\begin{equation*}
		\sigma(\gamma) = \sigma|_K(\gamma) = \left[\frac{K / \mathbb{Q}}{\mathfrak{P} \cap \mathcal{O}_K}\right](\gamma) = \gamma^s ,
	\end{equation*}
	since $\Norm_K(\gamma) = 1$ (and so $\gamma^{-1}$ is the algebraic conjugate of $\gamma$).
	Consequently, we get that $\sigma(\gamma^{1/n}) = \eta \gamma^{s/n}$, for some $n$th root of unity $\eta$.
	Since $p$ does not divide $n$, the polynomial $X^n - 1$ has no multiple roots modulo $\mathfrak{P}$.
	Hence, reduction modulo $\mathfrak{P}$ is injective on the set of $n$th roots of unity.
	Therefore, we get that $\sigma(\zeta_n) = \zeta_n^s$ and $\sigma(\gamma^{1/n}) = \gamma^{s/n}$, which in turn means that $\mathfrak{B} \in C_n$.
	Thus $\left[K_n \mid p\right] \subseteq C_n$, as desired.

	Suppose that $p$ does not ramify in $K_n$ and that $\left[K_n \mid p\right] \subseteq C_n$.
	Let $\mathfrak{P}$ be a prime ideal of $K_n$ lying over $p$, and put $\sigma := \left[K_n \mid \mathfrak{B}\right]$.
	Thus $\sigma(\zeta_n) = \zeta_n^t$ and $\sigma(\gamma^{1/n}) = \gamma^{t/n}$ for some $t \in \{-1, +1\}$.
	Then
	\begin{equation*}
		\zeta_n^t = \sigma(\zeta_n) = \sigma|_{\mathbb{Q}(\zeta_n)}(\zeta_n) = \left[\frac{\mathbb{Q}(\zeta_n) / \mathbb{Q}}{\mathfrak{P} \cap \mathcal{O}_{\mathbb{Q}(\zeta_n)}}\right](\zeta_n) = \zeta_n^p ,
	\end{equation*}
	which implies that $p \equiv t \pmod n$.
	Furthermore, we have that
	\begin{equation*}
		\gamma^{(p - t)/n} \equiv (\gamma^{1/n})^p \cdot \gamma^{-t/n} \equiv \sigma(\gamma^{1/n}) \cdot \gamma^{-t/n} \equiv \gamma^{t/n} \cdot \gamma^{-t/n} \equiv 1 \pmod {\mathfrak{P}} ,
	\end{equation*}
	which, by Lemma~\ref{lem:multiplicative-order}, implies that $\rho_{\bm{u}}(p) \mid (p - t) / n$.
	Hence, we have that $t = \big(\Delta_K \mid p\big)$ and $n \mid \iota_{\bm{u}}(p)$, as desired.
\end{proof}

The rest of the proof follows closely Hooley's proof of Artin's conjecture under the GRH~\cite{MR207630}, with some minor adaptations.
For each positive integer $t$, let us define
\begin{equation}\label{equ:def-deltaut}
    \delta_{\bm{u}}(t) := \sum_{n \,=\, 1}^\infty \frac{\mu(n)\,\#C_{nt}}{[K_{nt} : \mathbb{Q}]} .
\end{equation}
Note that the series in~\eqref{equ:def-deltaut} converges absolutely, thanks to Lemma~\ref{lem:kummer-bounds}\ref{lem:kummer-bounds:ite1} and Lemma~\ref{lem:1overphinn-series}.

For the rest of this section, we shall tacitly ignore the finitely many prime numbers that divide $a_2 \Delta_K$.
For all $x, y, z > 0$, define the sets
\begin{equation*}
    \mathcal{P}_{\bm{u}}(t, x, y) := \big\{p \leq x : t \mid \iota_{\bm{u}}(p) \text{ and } qt \nmid \iota_{\bm{u}}(p) \text{ for every } q \leq y \big\}
\end{equation*}
and
\begin{equation*}
    \mathcal{Q}_{\bm{u}}(t, x, y, z) := \big\{p \leq x : qt \mid \iota_{\bm{u}}(p) \text{ for some } q \in [y, z] \big\} .
\end{equation*}
Moreover, for every $x > 0$, put $y_1 := (\log x) / 6$, $y_2 := x^{1/2} / (\log x)^2$, and $y_3 := x^{1/2} \log x$.
Then, it follows easily that
\begin{equation}\label{equ:4terms}
    \#\mathcal{P}_{\bm{u}}(t, x) = \#\mathcal{P}_{\bm{u}}(t, x, y_1) + O\big(\#\mathcal{Q}_{\bm{u}}(t, x, y_1, y_2) + \#\mathcal{Q}_{\bm{u}}(t, x, y_2, y_3) + \#\mathcal{Q}_{\bm{u}}(t, x, y_3, x) \big) ,
\end{equation}
for all sufficiently large $x$.
The rest of the proof consists of four lemmas estimating the terms of~\eqref{equ:4terms}.

\begin{lemma}\label{lem:main-term}
    Assume the GRH.
    Then
    \begin{equation*}
        \#\mathcal{P}_{\bm{u}}(t, x, y_1) = \delta_{\bm{u}}(t) \, \frac{x}{\log x} + O_{\bm{u}}\!\left(\frac{x}{(\log x)^2}\right) ,
    \end{equation*}
    for all positive integers $t$ and for all $x > t^3$.
\end{lemma}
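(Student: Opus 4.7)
The plan is to perform a Möbius sieve at level $y_1$, apply the GRH Chebotarev bound via Lemma~\ref{lem:n-divides-iotaup}, and then recognize the truncated sum as a good approximation to $\delta_{\bm{u}}(t)$. Setting $P(y_1) := \prod_{q \,\leq\, y_1} q$, inclusion--exclusion over the prime conditions $qt \nmid \iota_{\bm{u}}(p)$ gives
\begin{equation*}
    \#\mathcal{P}_{\bm{u}}(t, x, y_1) \;=\; \sum_{d \,\mid\, P(y_1)} \mu(d)\; \pi_{dt}(x) , \qquad \pi_m(x) \;:=\; \#\{p \leq x : p \nmid a_2 \Delta_K,\; m \mid \iota_{\bm{u}}(p)\} .
\end{equation*}
By Lemma~\ref{lem:n-divides-iotaup}, each $\pi_{dt}(x)$ coincides with $\pi_{K_{dt},\,C_{dt}}(x)$ up to an $O_{\bm{u}}(1)$ error absorbing the primes dividing $a_2 \Delta_K$.

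Next I invoke Theorem~\ref{thm:chebotarev} on each $\pi_{K_{dt},\,C_{dt}}(x)$. The main terms reassemble into $\bigl(\sum_{d \mid P(y_1)} \mu(d)\, \#C_{dt}/[K_{dt}:\mathbb{Q}]\bigr)\,x/\log x$. For the Chebotarev remainder, I use that $d \leq P(y_1) = e^{\theta(y_1)} \leq x^{1/3}$ and that $t \leq x^{1/3}$ (since $x > t^3$), so $dt \leq x^{2/3}$; Lemma~\ref{lem:kummer-bounds}\ref{lem:kummer-bounds:ite1}--\ref{lem:kummer-bounds:ite2} then give $\log|\Delta_{K_{dt}}|/[K_{dt}:\mathbb{Q}] \ll_{\bm{u}} \log(2dt) \ll \log x$, so each summand contributes $\ll_{\bm{u}} x^{1/2}\log x$ (using $\#C_{dt} \leq 2$). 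Since there are at most $2^{\pi(y_1)} \leq 2^{y_1} = x^{(\log 2)/6}$ summands, the total remainder is $\ll_{\bm{u}} x^{(\log 2)/6 + 1/2} \log x$, comfortably below $x/(\log x)^2$.

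Finally, I extend the partial sum to the full series defining $\delta_{\bm{u}}(t)$. Every squarefree $d$ not dividing $P(y_1)$ has a prime factor $> y_1$, so $d > y_1$. Combining Lemma~\ref{lem:kummer-bounds}\ref{lem:kummer-bounds:ite1} with the elementary inequality $\varphi(nt) \geq \varphi(n)$ and Lemma~\ref{lem:1overphinn-series}, the tail is bounded by
\begin{equation*}
    \sum_{n \,>\, y_1}\frac{\#C_{nt}}{[K_{nt}:\mathbb{Q}]} \;\ll_{\bm{u}}\; \frac{1}{t}\sum_{n \,>\, y_1}\frac{1}{\varphi(n)\,n} \;\ll_{\bm{u}}\; \frac{1}{t\,y_1} \;\ll\; \frac{1}{\log x} ,
\end{equation*}
so multiplication by $x/\log x$ absorbs it into the claimed error. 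The crux of the argument is the balance in the choice $y_1 = (\log x)/6$: large enough that the truncation error $(1/y_1)\cdot x/\log x$ is admissible, yet small enough that the $2^{\pi(y_1)}$ sieve terms do not overwhelm the GRH Chebotarev error. The hypothesis $x > t^3$ is precisely what keeps the bound $\log(dt) \ll \log x$ uniform in $t$ throughout.
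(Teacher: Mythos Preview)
Your argument is correct and follows essentially the same route as the paper: Möbius sieve over squarefree integers supported on primes $\leq y_1$, translation to Chebotarev counts via Lemma~\ref{lem:n-divides-iotaup}, the GRH bound from Theorem~\ref{thm:chebotarev} controlled through Lemma~\ref{lem:kummer-bounds}, and the tail handled by Lemma~\ref{lem:1overphinn-series}. The only cosmetic differences are that the paper bounds the number of sieve terms by $\#\mathcal{S}(y_1)\leq x^{1/3}$ (via $\prod_{p\le y_1}p\le 4^{y_1}$) rather than your $2^{\pi(y_1)}\le x^{(\log 2)/6}$, and it drops the extra factor $1/t$ in the tail estimate; both versions give the required $O_{\bm{u}}(x/(\log x)^2)$.
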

\begin{proof}
    Let $\mathcal{S}(y_1)$ be the set of all positive squarefree integers whose prime factors are not exceeding $y_1$.
    By the inclusion-exclusion principle and by Lemma~\ref{lem:n-divides-iotaup}, we get that
    \begin{equation}\label{equ:main-term1}
        \#\mathcal{P}_{\bm{u}}(t, x, y_1) = \sum_{n \,\in\, \mathcal{S}(y_1)} \mu(n) \, \#\big\{p \leq x : nt \mid \iota_u(p)\big\} = \sum_{n \,\in\, \mathcal{S}(y_1)} \mu(n)\,\pi_{K_{nt}, C_{nt}}(x) .
    \end{equation}
    Moreover, by Theorem~\ref{thm:chebotarev} and Lemma~\ref{lem:kummer-bounds}\ref{lem:kummer-bounds:ite1} and \ref{lem:kummer-bounds:ite2}, we have that
    \begin{equation}\label{equ:main-term2}
        \sum_{n \,\in\, \mathcal{S}(y_1)} \mu(n)\,\pi_{K_{nt}, C_{nt}}(x)
        = \sum_{n \,\in\, \mathcal{S}(y_1)} \frac{\mu(n)\,\#C_{nt}}{[K_{nt} : \mathbb{Q}]} \frac{x}{\log x} + O\!\left(x^{1/2} \sum_{n \,\in\, \mathcal{S}(y_1)} \log(2nt x) \right) .
    \end{equation}
    If $n \in \mathcal{S}(y_1)$ then $n \leq \prod_{p \,\leq\, y_1} p \leq 4^{y_1} \leq x^{1/3}$ (see~\cite[Lemma~2.8]{MR2919246}).
    Consequently, a fortiori, $\#\mathcal{S}(y_1) \leq x^{1/3}$.
    Therefore, also recalling that $t < x^{1/3}$, we get that
    \begin{equation}\label{equ:main-term3}
        \sum_{n \,\in\, \mathcal{S}(y_1)} \log(2nt x) \ll x^{1/3} \log x .
    \end{equation}
    Furthermore, by~\eqref{equ:def-deltaut}, Lemma~\ref{lem:kummer-bounds}\ref{lem:kummer-bounds:ite1}, and Lemma~\ref{lem:1overphinn-series}, we have that
    \begin{equation}\label{equ:main-term4}
        \delta_{\bm{u}}(t) - \sum_{n \,\in\, \mathcal{S}(y_1)} \frac{\mu(n)\,\#C_{nt}}{[K_{nt} : \mathbb{Q}]} \ll \sum_{n \,>\, y_1} \frac1{[K_{nt} : \mathbb{Q}]} \ll \sum_{n \,>\, y_1} \frac1{\varphi(n)n} \ll \frac1{y_1} \ll \frac1{\log x}.
    \end{equation}
    Putting together \eqref{equ:main-term1}, \eqref{equ:main-term2}, \eqref{equ:main-term3}, and \eqref{equ:main-term4}, the claim follows.
\end{proof}

\begin{lemma}\label{lem:Quxy1y2}
    Assume the GRH.
    Then
    \begin{equation*}
        \#\mathcal{Q}_{\bm{u}}(t, x, y_1, y_2) \ll \frac{x}{(\log x)^2} ,
    \end{equation*}
    for all positive integers $t$ and for all $x > t^3$.
\end{lemma}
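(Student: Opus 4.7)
The plan is to estimate $\#\mathcal{Q}_{\bm{u}}(t, x, y_1, y_2)$ by a union bound over the primes $q \in [y_1, y_2]$, reducing each summand to the Chebotarev counting function for the field $K_{qt}$ via Lemma~\ref{lem:n-divides-iotaup}, and then applying Theorem~\ref{thm:chebotarev} together with the degree and discriminant estimates in Lemma~\ref{lem:kummer-bounds}.

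First, by Lemma~\ref{lem:n-divides-iotaup}, every prime $p \in \mathcal{Q}_{\bm{u}}(t, x, y_1, y_2)$ (apart from the finitely many that divide $a_2 \Delta_K$) is unramified in $K_{qt}$ and satisfies $\left[K_{qt} \mid p\right] \subseteq C_{qt}$ for some prime $q \in [y_1, y_2]$. A union bound over $q$ therefore yields
\begin{equation*}
    \#\mathcal{Q}_{\bm{u}}(t, x, y_1, y_2) \leq \sum_{y_1 \,\leq\, q \,\leq\, y_2} \pi_{K_{qt}, C_{qt}}(x) + O_{\bm{u}}(1) .
\end{equation*}

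Next, I would apply Theorem~\ref{thm:chebotarev} to each term. Since $\#C_{qt} \leq 2$, Lemma~\ref{lem:kummer-bounds}\ref{lem:kummer-bounds:ite1} gives $[K_{qt} : \mathbb{Q}] \gg_{\bm{u}} \varphi(qt)\,qt \geq q(q-1)$, while Lemma~\ref{lem:kummer-bounds}\ref{lem:kummer-bounds:ite1} combined with \ref{lem:kummer-bounds:ite2} yields $\log|\Delta_{K_{qt}}| / [K_{qt} : \mathbb{Q}] \ll_{\bm{u}} \log(2qt) \ll \log x$, the last estimate using $q \leq y_2 \leq x^{1/2}$ together with the hypothesis $t < x^{1/3}$. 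The Chebotarev main terms then sum to
\begin{equation*}
    \frac{x}{\log x} \sum_{q \,>\, y_1} \frac{\#C_{qt}}{[K_{qt} : \mathbb{Q}]} \ll_{\bm{u}} \frac{x}{\log x} \sum_{q \,>\, y_1} \frac{1}{q(q - 1)} \ll \frac{x}{y_1 \log x} \ll \frac{x}{(\log x)^2} ,
\end{equation*}
and the Chebotarev error terms sum to
\begin{equation*}
    \pi(y_2) \cdot O_{\bm{u}}\!\left(x^{1/2} \log x\right) \ll \frac{y_2}{\log x} \cdot x^{1/2} \log x = x^{1/2} y_2 = \frac{x}{(\log x)^2} ,
\end{equation*}
using $\log y_2 \gg \log x$. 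Adding the two estimates gives the stated bound.

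The main (rather modest) obstacle is bookkeeping: one must apply Lemma~\ref{lem:kummer-bounds} with the pair $(F, a) = (K, \gamma)$ fixed, so that the implicit constants depend only on $\bm{u}$, and check that the ranges of $q$ and $t$ are calibrated so that $\log(2qt)$ can be absorbed into $\log x$ --- which is precisely where the hypothesis $x > t^3$ enters. Beyond this, no ideas beyond the standard Hooley/Chebotarev framework are needed, and the choices $y_1 = (\log x)/6$ and $y_2 = x^{1/2}/(\log x)^2$ are exactly what balance the main-term and error-term contributions at the common size $x/(\log x)^2$.
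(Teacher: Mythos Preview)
Your proposal is correct and follows essentially the same approach as the paper: a union bound over primes $q\in[y_1,y_2]$, conversion to $\pi_{K_{qt},C_{qt}}(x)$ via Lemma~\ref{lem:n-divides-iotaup}, and then Theorem~\ref{thm:chebotarev} together with Lemma~\ref{lem:kummer-bounds} to bound the main-term sum by $x/(y_1\log x)$ and the error-term sum by $x^{1/2}(\log x)\,\pi(y_2)$. The only cosmetic difference is that you record the lower bound $[K_{qt}:\mathbb{Q}]\gg q(q-1)$ while the paper writes $\sum 1/q^2$ directly; both yield the same $O(1/y_1)$ tail.
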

\begin{proof}
    If $p \in \mathcal{Q}_{\bm{u}}(t, x, y_1, y_2)$ then $qt \mid \iota_{\bm{u}}(p)$ for some $q \in [y_1, y_2]$.
    Consequently, by Lemma~\ref{lem:n-divides-iotaup}, we have that
    \begin{equation*}
        \#\mathcal{Q}_{\bm{u}}(t, x, y_1, y_2) \leq \sum_{q \,\in\, [y_1, y_2]} \pi_{K_{qt}, C_{qt}}(x) .
    \end{equation*}
    Furthermore, by Theorem~\ref{thm:chebotarev} and Lemma~\ref{lem:kummer-bounds}\ref{lem:kummer-bounds:ite1} and \ref{lem:kummer-bounds:ite2}, we get that
    \begin{align*}
        \sum_{q \,\in\, [y_1, y_2]} \pi_{K_{qt}, C_{qt}}(x)
        &= \sum_{q \,\in\, [y_1, y_2]} \frac{\# C_{qt}}{[K_{qt} : \mathbb{Q}]}\frac{x}{\log x} + O\!\left(x^{1/2} \sum_{q \,\in\, [y_1, y_2]} \left(\frac{\log|\Delta_{K_{qt}}|}{[K_{qt} : \mathbb{Q}]} + \log x\right) \right) \\
        &\ll \sum_{q \,\in\, [y_1, y_2]} \frac1{\varphi(qt)qt}\frac{x}{\log x} + O\!\left(x^{1/2} \sum_{q \,\in\, [y_1, y_2]} \log(2qt x) \right) \\
        &\ll \sum_{q \,\geq\, y_1} \frac1{q^2}\frac{x}{\log x} + O\!\left(x^{1/2} \log x \sum_{q \,\leq \, y_2} 1 \right) \\
        &\ll \frac1{y_1}\frac{x}{\log x} + O\!\left(x^{1/2} \log x \, \frac{y_2}{\log y_2}\right) \\
        &\ll \frac{x}{(\log x)^2} ,
    \end{align*}
    where we used the upper bound $\sum_{q \,\geq\, z} 1 / q^2 \ll 1/z$ and Chebyshev's estimate $\sum_{q \,\leq\, z} 1 \ll z / \log z$, which holds for every $z > 1$.
\end{proof}

\begin{lemma}\label{lem:Quxy2y3}
    We have that
    \begin{equation*}
        \#\mathcal{Q}_{\bm{u}}(t, x, y_2, y_3) \ll \frac{x \log (2\log x)}{\varphi(t)(\log x)^2} ,
    \end{equation*}
    for all positive integers $t$ and for all $x > t^3$.
\end{lemma}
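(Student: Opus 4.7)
My plan is to discard most of the Lucas-sequence structure and keep only the arithmetic progression condition that $qt\mid\iota_{\bm{u}}(p)$ imposes on $p$ itself. If $p\in\mathcal{Q}_{\bm{u}}(t,x,y_2,y_3)$, then there is a prime $q\in[y_2,y_3]$ with $qt\mid\iota_{\bm{u}}(p)$; writing $s:=(\Delta_K\mid p)\in\{-1,+1\}$, the identity $\iota_{\bm{u}}(p)\rho_{\bm{u}}(p)=p-s$ forces $qt\mid p-s$, so $p$ lies in one of the two progressions $\pm1\pmod{qt}$. Thus
\begin{equation*}
\#\mathcal{Q}_{\bm{u}}(t,x,y_2,y_3)\;\le\;\sum_{y_2\le q\le y_3}\#\bigl\{p\le x:\,p\equiv\pm1\pmod{qt}\bigr\}.
\end{equation*}

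The next step is to apply the Brun--Titchmarsh inequality to each inner count. The hypothesis $x>t^3$ together with $q\le y_3=x^{1/2}\log x$ gives $qt\le x^{5/6}\log x$, so $\log(x/(qt))\gg\log x$ uniformly. Brun--Titchmarsh then yields
\begin{equation*}
\#\bigl\{p\le x:\,p\equiv\pm1\pmod{qt}\bigr\}\;\ll\;\frac{x}{\varphi(qt)\log x}\;\ll\;\frac{x}{q\,\varphi(t)\log x},
\end{equation*}
using the elementary inequality $\varphi(qt)\ge\varphi(q)\varphi(t)=(q-1)\varphi(t)\gg q\,\varphi(t)$.

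Finally, I sum over primes $q\in[y_2,y_3]$ using Mertens' theorem. Since $\log y_2=\tfrac12\log x-2\log\log x$ and $\log y_3=\tfrac12\log x+\log\log x$, a short computation based on $\sum_{y_2\le q\le y_3}1/q=\log\log y_3-\log\log y_2+O(1/\log y_2)$ yields $\sum_{y_2\le q\le y_3}1/q\ll\log\log x/\log x$. Combining with the per-$q$ Brun--Titchmarsh bound gives
\begin{equation*}
\#\mathcal{Q}_{\bm{u}}(t,x,y_2,y_3)\;\ll\;\frac{x}{\varphi(t)\log x}\cdot\frac{\log\log x}{\log x}\;=\;\frac{x\log\log x}{\varphi(t)(\log x)^2},
\end{equation*}
which matches the stated bound since $\log(2\log x)\asymp\log\log x$.

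The one delicate point in this plan is the uniform control $qt\le x^{5/6}\log x$ needed for Brun--Titchmarsh to produce the extra factor of $\log x$ in the denominator; this is exactly what the hypothesis $x>t^3$ secures. No GRH input is required for this step, in contrast to Lemmas~\ref{lem:main-term} and~\ref{lem:Quxy1y2}: the range of moduli $qt$ is already so close to $x$ that only a sieve bound is available, and fortunately it suffices.
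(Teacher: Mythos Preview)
Your proof is correct and follows essentially the same route as the paper: reduce $qt\mid\iota_{\bm{u}}(p)$ to the congruence $p\equiv\pm1\pmod{qt}$, apply Brun--Titchmarsh (using $x>t^3$ to ensure $qt\le y_3 x^{1/3}=x^{5/6}\log x$ so that $\log(x/(qt))\gg\log x$), and finish with Mertens to bound $\sum_{y_2\le q\le y_3}1/q$. Your presentation is slightly more detailed in the Mertens step, but the argument is the same.
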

\begin{proof}
    If $p \in \mathcal{Q}_{\bm{u}}(t, x, y_2, y_3)$ then $qt \mid \iota_{\bm{u}}(p)$ for some prime number $q \in [y_2, y_3]$.
    In particular, we have that $p \equiv \pm 1 \pmod {qt}$.
    Consequently, assuming that $x$ is sufficiently large so that $qt \leq y_3 x^{1/3} < x$, by the Brun--Titchmarsh inequality (see, e.g.,~\cite[Theorem~12.7]{MR2919246}), we get that
    \begin{align*}
        \#\mathcal{Q}_{\bm{u}}(t, x, y_2, y_3) &\leq \sum_{q \,\in\, [y_2, y_3]} \#\big\{p \leq x : p \equiv \pm 1 \!\!\!\pmod {qt}\big\}
        \ll \sum_{q \,\in\, [y_2, y_3]} \frac{x}{\varphi(qt) \log\!\big(x / (qt)\big)} \\
        &\ll \frac{x}{\varphi(t)\log x} \sum_{q \,\in\, [y_2, y_3]} \frac1{q} \ll \frac{x\log(2\log x)}{\varphi(t)(\log x)^2} ,
    \end{align*}
    where the last estimate follows from the Mertens theorem.
\end{proof}

\begin{lemma}\label{lem:Quxy3x}
    We have that
    \begin{equation*}
        \#\mathcal{Q}_{\bm{u}}(t, x, y_3, x) \ll_{\bm{u}} \frac{x}{(\log x)^2} ,
    \end{equation*}
    for all positive integers $t$ and for all $x > 1$.
\end{lemma}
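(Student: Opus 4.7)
The plan is to exploit the fact that if $p \in \mathcal{Q}_{\bm{u}}(t, x, y_3, x)$, then the rank of appearance $\rho_{\bm{u}}(p)$ must be small, which forces $p$ to divide $u_k$ for some small $k$. Concretely, if $qt \mid \iota_{\bm{u}}(p)$ for some prime $q \geq y_3$, then $\iota_{\bm{u}}(p) \geq y_3$, and since $\iota_{\bm{u}}(p)\,\rho_{\bm{u}}(p) = p - \bigl(D_{\bm{u}}\mid p\bigr) \leq p + 1 \leq x + 1$, we obtain the bound $\rho_{\bm{u}}(p) \leq (x+1)/y_3 \ll x^{1/2}/\log x$.

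Setting $K := \lfloor c\,x^{1/2}/\log x\rfloor$ for a suitable absolute constant $c$, every prime $p \in \mathcal{Q}_{\bm{u}}(t, x, y_3, x)$ must therefore divide $u_k$ for some $1 \leq k \leq K$. Consequently,
\begin{equation*}
    \#\mathcal{Q}_{\bm{u}}(t, x, y_3, x) \leq \#\bigl\{p : p \mid u_k \text{ for some } 1 \leq k \leq K\bigr\} \leq \sum_{k \,=\, 1}^K \omega(u_k).
\end{equation*}

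To bound $\omega(u_k)$, I would use the Binet-type formula $u_k = (\alpha^k - \beta^k)/(\alpha - \beta)$, which yields $|u_k| \ll_{\bm{u}} C^k$ with $C := \max(|\alpha|,|\beta|)$. Since any positive integer $m$ satisfies $\omega(m) \leq \log_2 m$, this gives $\omega(u_k) \ll_{\bm{u}} k$. Therefore
\begin{equation*}
    \sum_{k \,=\, 1}^K \omega(u_k) \ll_{\bm{u}} \sum_{k \,=\, 1}^K k \ll K^2 \ll \frac{x}{(\log x)^2},
\end{equation*}
which is the required bound. The case of small $x$ can be absorbed into the implied constant.

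I do not expect any serious obstacle: the only subtlety is noticing that the size of $q$ (rather than a Chebotarev-style argument) is what does the work here, by forcing $\rho_{\bm{u}}(p)$ to be small. Once this is observed, the combinatorial count $\sum_k \omega(u_k)$ together with the elementary exponential bound on $|u_k|$ is enough, and no appeal to GRH or to the Chebotarev density theorem is needed.
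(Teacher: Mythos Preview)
Your proposal is correct and follows essentially the same approach as the paper: observe that a large prime factor $q \geq y_3$ of $\iota_{\bm{u}}(p)$ forces $\rho_{\bm{u}}(p) \ll x^{1/2}/\log x$, so every such $p$ divides an early term $u_k$ with $k \ll x^{1/2}/\log x$, and then bound the total number of prime factors of these terms using the exponential bound $|u_k| \ll_{\bm{u}} C^k$. The only cosmetic difference is packaging: you bound $\sum_{k \leq K} \omega(u_k) \ll_{\bm{u}} \sum_{k \leq K} k$, whereas the paper takes the equivalent multiplicative form $2^{\#\mathcal{Q}} \leq \prod_{p \in \mathcal{Q}} p \leq \prod_{k \leq K} |u_k|$ and then bounds the right-hand side by $A^{O(K^2)}$.
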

\begin{proof}
    If $p \in \mathcal{Q}_{\bm{u}}(t, x, y_3, x)$ then $p \leq x$ and $qt \mid \iota_u(p)$ for some prime number $q \geq y_3$.
    Hence, we have that $\rho_{\bm{u}}(p)$ divides
    \begin{equation*}
        m := \frac{p - \big(D_{\bm{u}} \!\mid p\big)}{q} \leq \frac{2x}{y_3} = \frac{2x^{1/2}}{\log x} ,
    \end{equation*}
    and consequently $p \mid u_m$ (since, in general, $p \mid u_n$ if and only if $p \nmid a_2$ and $\rho_{\bm{u}}(p) \mid n$, see, e.g., \cite[Chapter~1, Section~3]{MR1761897}).
    Therefore, we get that
    \begin{equation*}
        2^{\#\mathcal{Q}_{\bm{u}}(t, x, y_3, x)} \leq \prod_{p \,\in\, \mathcal{Q}_{\bm{u}}(t, x, y_3, x)} p \leq \prod_{m \,\leq\, 2x^{1/2} \!/\!\log x} |u_m| \leq A^{2\sum_{m \,\leq\, 2x^{1/2} \!/\! \log x} m} = A^{O(x / (\log x)^2)} ,
    \end{equation*}
    where $A := \max\{|\alpha|, |\beta|, 2\}$ and where we used the upper bound $|u|_m \leq A^{2m}$, which follows easily from the Binet formula.
    The claim follows.
\end{proof}

At this point, Theorem~\ref{thm:asymptotic} follows by putting together \eqref{equ:4terms} and Lemmas~\ref{lem:main-term}, \ref{lem:Quxy1y2}, \ref{lem:Quxy2y3}, and~\ref{lem:Quxy3x}.
The proof is complete.


\section{Proof of Theorem~\ref{thm:constant}}

\subsection{General preliminaries}

This section collects some general results needed later.

\begin{lemma}\label{lem:Qzetan}
    Let $n > 0$ and $m$ be integers.
    Then we have that:
    \begin{enumerate}

        \item\label{lem:Qzetan:ite1} $\sqrt{m} \in \mathbb{Q}(\zeta_n)$ if and only if $\Delta_{\mathbb{Q}(\!\sqrt{m})} \mid n$;

        \item\label{lem:Qzetan:ite2} if $\sqrt{m} \notin \mathbb{Q}(\zeta_n)$ and $\sqrt{m} \in \mathbb{Q}(\zeta_{2n})$, then $\nu_2(n) \in \{1, 2\}$;

        \item\label{lem:Qzetan:ite3} if $\nu_2(n) = 1$ then $-\zeta_n \in \mathbb{Q}(\zeta_n)^2$;

        \item\label{lem:Qzetan:ite4} if $\nu_2(n) = 2$ then $2\zeta_n \in \mathbb{Q}(\zeta_n)^2$.

    \end{enumerate}
\end{lemma}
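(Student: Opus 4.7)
The plan is to handle the four items in turn, with \ref{lem:Qzetan:ite1} doing the real work and the rest following either by elementary divisibility or explicit squaring.

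For \ref{lem:Qzetan:ite1}, I would invoke the Kronecker--Weber theorem together with the standard computation that the conductor of the abelian extension $\mathbb{Q}(\sqrt{m})/\mathbb{Q}$ equals $|\Delta_{\mathbb{Q}(\sqrt{m})}|$ (equivalently, a quadratic Gauss sum of $|\Delta|$-th roots of unity represents $\sqrt{\pm|\Delta|}$, showing $\mathbb{Q}(\sqrt{m}) \subseteq \mathbb{Q}(\zeta_{|\Delta|})$, while minimality follows from ramification: no prime outside $|\Delta|$ ramifies in $\mathbb{Q}(\sqrt{m})$, but every prime dividing $|\Delta|$ must, which forces $|\Delta|$ to divide the conductor). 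Granting this, $\sqrt{m} \in \mathbb{Q}(\zeta_n)$ is equivalent to $\mathbb{Q}(\zeta_{|\Delta|}) \subseteq \mathbb{Q}(\zeta_n)$, and the well-known containment criterion $\mathbb{Q}(\zeta_a) \subseteq \mathbb{Q}(\zeta_b) \iff a \mid b$ (valid whenever $a \not\equiv 2 \pmod 4$) gives the conclusion, using that a quadratic-field discriminant is never $\equiv 2 \pmod 4$.

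For \ref{lem:Qzetan:ite2}, I would just apply \ref{lem:Qzetan:ite1} twice: the hypothesis translates to $|\Delta| \mid 2n$ but $|\Delta| \nmid n$, which forces $\nu_2(|\Delta|) = \nu_2(n) + 1$. Since for a quadratic field one has $\nu_2(|\Delta|) \in \{0,2,3\}$ (respectively when the squarefree part of $m$ is $\equiv 1, \equiv 3 \pmod 4$, or even), and since $\nu_2(n) \geq 0$ rules out the value $0$, we conclude $\nu_2(n) \in \{1,2\}$.

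For \ref{lem:Qzetan:ite3} and \ref{lem:Qzetan:ite4}, I would give explicit squarings. If $\nu_2(n) = 1$, write $n = 2m$ with $m$ odd; then $\zeta_n^m = -1$, so $-\zeta_n = \zeta_n^{m+1} = (\zeta_n^{(m+1)/2})^2$, which lies in $\mathbb{Q}(\zeta_n)^2$. If $\nu_2(n) = 2$, write $n = 4m$ with $m$ odd; the identity $(1+\zeta_4)^2 = 2\zeta_4$ combined with $\zeta_4 = \zeta_n^m$ and $\zeta_n/\zeta_4 = \zeta_n^{1-m} = (\zeta_n^{(1-m)/2})^2$ (where $1-m$ is even) yields $2\zeta_n = (1+\zeta_4)^2 (\zeta_n^{(1-m)/2})^2 \in \mathbb{Q}(\zeta_n)^2$. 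The only genuine obstacle is the conductor-discriminant input in \ref{lem:Qzetan:ite1}, which is a classical fact that I would simply cite; everything else is bookkeeping with exponents.
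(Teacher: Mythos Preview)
Your proposal is correct and follows essentially the same path as the paper: item~\ref{lem:Qzetan:ite1} is cited as a classical fact (the paper refers to \cite[Lemma~3]{MR284417}), item~\ref{lem:Qzetan:ite2} is deduced from~\ref{lem:Qzetan:ite1} via the same divisibility argument $\nu_2(\Delta)=\nu_2(n)+1$ together with the possible values of $\nu_2(\Delta)$, and items~\ref{lem:Qzetan:ite3}--\ref{lem:Qzetan:ite4} are handled by the same explicit squarings (the paper writes $-\zeta_n=(\zeta_n^{(n+2)/4})^2$ and $2\zeta_n=((1-\mathbf{i})\zeta_n^{(n+4)/8})^2$, which are exactly your formulas after the substitutions $n=2m$ and $n=4m$, up to using $(1-\mathbf{i})^2=-2\mathbf{i}$ instead of $(1+\mathbf{i})^2=2\mathbf{i}$).
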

\begin{proof}
    Fact \ref{lem:Qzetan:ite1} is well known (cf.\ \cite[Lemma~3]{MR284417}).
    Let us prove \ref{lem:Qzetan:ite2}.
    If $\sqrt{m} \notin \mathbb{Q}(\zeta_n)$ and $\sqrt{m} \in \mathbb{Q}(\zeta_{2n})$ then, by \ref{lem:Qzetan:ite1}, we get that $D \nmid n$ and $D \mid 2n$, where $D := \Delta_{\mathbb{Q}(\!\sqrt{m})}$.
    Let $d$ be the squarefree integer such that $\mathbb{Q}(\!\sqrt{m}) = \mathbb{Q}(\!\sqrt{d})$.
    If $d \equiv 1 \pmod 4$ then $D = d$.
    Hence, $d \nmid n$ and $d \mid 2n$, which is impossible, since $d$ is odd.
    If $d \equiv 2,3 \pmod 4$ then $D = 4d$.
    Hence, $4d \nmid n$ and $2d \mid n$, which implies that $\nu_2(n) = \nu_2(d) + 1 \in \{1, 2\}$, as claimed.
    Finally, \ref{lem:Qzetan:ite3} and \ref{lem:Qzetan:ite4} follow by a quick verification of the identities $-\zeta_n = \left(\zeta_n^{(n + 2) / 4}\right)^2$ and $2 \zeta_n = \left((1 - \mathbf{i}) \zeta_n^{(n + 4) / 8}\right)^2$, respectively.
\end{proof}

\begin{lemma}\label{lem:galois-biquadratic}
    Let $F$ be a field of characteristic zero, and let $X^4 + aX^2 + b \in F[X]$ be an irreducible polynomial with Galois group $G$.
    Then:
    \begin{enumerate}
        \item if $b \in F^2$, then $G \cong C_2 \times C_2$;
        \item if $b \notin F^2$ and $b(a^2 - 4b) \in F^2$, then $G \cong C_4$;
        \item if $b \notin F^2$ and $b(a^2 - 4b) \notin F^2$, then $G \cong D_8$;
    \end{enumerate}
    where $C_n$ and $D_n$ denote the cyclic group and the dihedral groups of $n$ elements, respectively.
\end{lemma}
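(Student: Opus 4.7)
The plan is to identify the Galois group $G$ with a transitive subgroup of the dihedral group $D_8$ acting on the four roots of $f$, and then to separate the three cases by computing which subgroups of $D_8$ stabilize the two distinguished quantities $\sqrt{b}$ and $\sqrt{b(a^2-4b)}$.

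First I would fix a root $\alpha$ of $f$ and a second root $\beta$ with $\beta^2 \neq \alpha^2$; since $f$ has no odd-degree terms, the four roots are $\pm\alpha, \pm\beta$, and they satisfy $\alpha^2 + \beta^2 = -a$ and $\alpha^2\beta^2 = b$. Any $F$-automorphism of the splitting field $L = F(\alpha,\beta)$ permutes the roots while preserving the relation ``two roots sum to zero'', so labeling $(\alpha,\beta,-\alpha,-\beta)$ as $(1,2,3,4)$ embeds $G$ into
\[
D_8 = \{e,\,(1234),\,(13)(24),\,(1432),\,(13),\,(24),\,(12)(34),\,(14)(23)\} \subset S_4 .
\]
Irreducibility of $f$ forces $G$ to act transitively on the four roots, so $G$ must be one of the three transitive subgroups of $D_8$: the full $D_8$, the cyclic rotation subgroup $C_4 = \langle (1234)\rangle$, or the ``diagonal'' Klein four group $V = \{e,(12)(34),(13)(24),(14)(23)\}$ (not to be confused with the non-transitive Klein four group $\{e,(13),(24),(13)(24)\}$).

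Next, I would compute the action of each element of $D_8$ on $u := \alpha\beta$ and $v := (\alpha^2 - \beta^2)\,\alpha\beta$, noting that $u^2 = b$ and $v^2 = b(a^2-4b)$. A short sign-tracking calculation gives $\mathrm{Stab}_{D_8}(u) = V$ and $\mathrm{Stab}_{D_8}(v) = C_4$, so by the Galois correspondence $b \in F^2 \Leftrightarrow u \in F \Leftrightarrow G \subseteq V$ and $b(a^2-4b) \in F^2 \Leftrightarrow v \in F \Leftrightarrow G \subseteq C_4$.

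Combining these two equivalences with $|G| \geq 4$ yields the trichotomy of the lemma: in case (i) we have $G \subseteq V$, hence $G = V \cong C_2 \times C_2$; in case (ii) we have $G \subseteq C_4$ with $G \not\subseteq V$, forcing $G = C_4$; and in case (iii) we have $G \not\subseteq V$ and $G \not\subseteq C_4$, so the only remaining possibility is $G = D_8$. The only real care required is in the stabilizer computations of step two, where the sign bookkeeping must be done correctly and $V$ must be kept distinct from the non-transitive Klein four subgroup of $D_8$; everything else is formal.
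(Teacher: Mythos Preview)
Your argument is correct. The paper itself gives no proof of this lemma at all---it simply cites \cite[Chapter~V, Section~4, Exercise~9]{MR600654} (Hungerford, \emph{Algebra})---so your self-contained treatment actually supplies more than the paper does. The route you take, via the resolvent elements $u=\alpha\beta$ and $v=(\alpha^2-\beta^2)\alpha\beta$ and the computation of their $D_8$-stabilizers, is the standard one and is essentially the solution the cited exercise is pointing to; the irreducibility of $f$ guarantees $\alpha\beta\neq 0$ and $\alpha^2\neq\beta^2$, so $u,v$ are genuine square roots of $b$ and $b(a^2-4b)$ and the equivalences $u\in F\Leftrightarrow b\in F^2$, $v\in F\Leftrightarrow b(a^2-4b)\in F^2$ hold as stated.
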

\begin{proof}
    See \cite[Chapter~V, Section~4, Exercise~9]{MR600654}.
\end{proof}

\begin{lemma}\label{lem:abelian}
    Let $F$ be a field, let $n$ be a positive integer not divisible by the characteristic of $F$, let $m$ be the number of $n$th roots of unity contained in $F$, and let $a \in F$.
    Then $F(\zeta_n, a^{1/n})/F$ is abelian if and only if $a^m \in F^n$.
\end{lemma}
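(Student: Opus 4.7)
The plan is to prove both implications separately. The easy direction uses a compositum of a cyclotomic and a Kummer extension; the hard direction reduces to a prime-power modulus and then exploits a commutator identity coming from the abelianness of $\Gal(L/F)$.

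For the direction ``$a^m \in F^n$ implies $L/F$ abelian'': write $a^m = b^n$ with $b \in F$. Then $(a^{1/n})^m/b$ is an $n$-th root of unity, so for compatible choices of roots $a^{1/n} = \eta \cdot b^{1/m}$ for some $\eta \in \mu_{nm}$, and hence $L \subseteq F(\zeta_{nm}, b^{1/m})$. The latter is the compositum of the cyclotomic extension $F(\zeta_{nm})/F$ (abelian) and the Kummer extension $F(b^{1/m})/F$ (abelian, since $\mu_m \subseteq F$). A compositum and a subextension of abelian extensions are abelian, so $L/F$ is abelian.

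For the converse, I would first reduce to the case $n = p^s$, $m = p^t$. For each prime $p \mid n$, the subextension $L_p := F(\zeta_{p^{s_p}}, a^{1/p^{s_p}})$ of $L$ is abelian over $F$ and satisfies $|\mu_{p^{s_p}} \cap F| = p^{t_p}$ with $m = \prod_p p^{t_p}$; applying the prime-power case to each yields $a^{p^{t_p}} \in F^{p^{s_p}}$, and a CRT-style argument using $F^*/F^{*n} \cong \bigoplus_p F^*/F^{*p^{s_p}}$ together with $\gcd(m/p^{t_p}, p) = 1$ combines these to $a^m \in F^n$. For the prime-power case, set $K := F(\zeta_n)$, $\xi := a^{1/n}$, $G := \Gal(L/F)$, and parametrise each $\sigma \in G$ by $(c_\sigma, e_\sigma)$ with $\sigma(\zeta_n) = \zeta_n^{c_\sigma}$ and $\sigma(\xi) = \zeta_n^{e_\sigma}\xi$. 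Abelianness of $G$ translates into the commutator identity $(c_\sigma - 1)e_\tau \equiv (c_\tau - 1)e_\sigma \pmod n$ for all $\sigma, \tau \in G$. Since $\mu_m \subseteq F$ but $\mu_{pm} \not\subseteq F$ by the maximality of $m$, there exists $\sigma_0 \in G$ with $c_{\sigma_0} - 1 = m u$ for some integer $u$ coprime to $p$; in the prime-power case this forces $u \in (\mathbb{Z}/n)^*$. Setting $j := e_{\sigma_0} u^{-1} \in \mathbb{Z}/n$ and applying the commutator identity with $\sigma_0$ gives $m e_\sigma \equiv (c_\sigma - 1) j \pmod n$ for every $\sigma \in G$. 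Equivalently $\sigma(\xi^m/\zeta_n^j) = \xi^m/\zeta_n^j$ for all $\sigma$, so $\xi^m/\zeta_n^j =: b$ lies in $L^G = F$, and raising to the $n$-th power gives $a^m = (\xi^m)^n = (\zeta_n^j b)^n = b^n \in F^n$.

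The main obstacle is the construction of $\sigma_0$ with $u$ invertible in $\mathbb{Z}/n$; this is precisely where the prime-power reduction is essential. For general composite $n$ the maximality of $m$ only yields $u$ coprime to $n/m$, not necessarily to $n$, so $u^{-1}$ may fail to exist in $\mathbb{Z}/n$ and the clean trivialisation of the associated crossed homomorphism $\sigma \mapsto \sigma(\xi^m)/\xi^m$ breaks down. The prime-power hypothesis makes $n/m$ and $m$ powers of the same prime, so coprimality with $n/m$ is equivalent to coprimality with $n$.
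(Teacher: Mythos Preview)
Your argument is essentially correct; the only caveats are the trivial boundary case $m = n$ in the prime-power step (there the maximality of $m$ no longer forces $\mu_{pm} \not\subseteq F$, but then $a^m = a^n \in F^n$ is tautological) and the degenerate case $a = 0$. The paper does not actually prove this lemma: its entire proof is the citation ``See [Karpilovsky, \emph{Topics in field theory}, Chapter~8, Theorem~3.2].'' You have instead supplied a self-contained argument --- reducing to prime-power moduli via $F^*/(F^*)^n \cong \bigoplus_{p} F^*/(F^*)^{p^{s_p}}$, and then using the commutator relation $(c_\sigma - 1)e_\tau \equiv (c_\tau - 1)e_\sigma \pmod n$ in the semidirect-product description of $\Gal(L/F)$ to exhibit $\xi^m/\zeta_n^{\,j}$ as a Galois-fixed element of $L$ --- which is a standard route to this result. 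So your approach is complete where the paper simply outsources; the gain is self-containment, the cost is that the paper's one-line citation avoids the prime-power reduction bookkeeping entirely.
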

\begin{proof}
    See~\cite[Chapter~8, Theorem~3.2]{MR982265}.
\end{proof}

\begin{lemma}\label{lem:kummer-theory}
	Let $F$ be a number field, let $a \in F^*$, and let $n$ be a positive integer.
	Then $[F(\zeta_n, a^{1/n}) : F(\zeta_n)]$ is equal to the minimum positive integer $\ell$ such that $a^\ell \in F(\zeta_n)^n$.
	Moreover, we have that $f := X^\ell - (a^{1/n})^\ell$ is an irreducible polynomial over $F(\zeta_n)[X]$ and it holds $F(\zeta_n, a^{1/n}) \cong F(\zeta_n)[X] / (f)$.
\end{lemma}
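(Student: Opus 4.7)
Set $L := F(\zeta_n)$ and $\alpha := a^{1/n}$, and let $\ell$ denote the minimum positive integer with $a^\ell \in L^n$. The aim is to show $[L(\alpha):L] = \ell$ and that $X^\ell - \alpha^\ell$ is the minimal polynomial of $\alpha$ over $L$; both assertions in the lemma then follow at once. This is the classical Kummer-theory argument, and its whole content comes from the fact that $L$ contains all $n$th roots of unity.

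First I would establish the key observation that $\alpha^\ell \in L$. Writing $a^\ell = b^n$ with $b \in L$, we get $(\alpha^\ell / b)^n = a^\ell / b^n = 1$, so $\alpha^\ell / b$ is an $n$th root of unity and therefore lies in $L$ (because $\zeta_n \in L$). Hence $\alpha^\ell \in L$, so $\alpha$ is a root of $X^\ell - \alpha^\ell \in L[X]$, giving the inequality $[L(\alpha) : L] \leq \ell$.

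For the opposite inequality I would look at the minimal polynomial $m(X) \in L[X]$ of $\alpha$ over $L$. It divides $X^n - a$, and over $L(\alpha)[X]$ we have the complete factorization $X^n - a = \prod_{i=0}^{n-1}(X - \zeta_n^i \alpha)$, so $m(X) = \prod_{i \in S}(X - \zeta_n^i \alpha)$ for some subset $S \subseteq \{0, 1, \ldots, n - 1\}$ of size $e := [L(\alpha) : L]$. Its constant term equals $(-1)^e \alpha^e \prod_{i \in S} \zeta_n^i$; since this lies in $L$ and $\zeta_n \in L$, we deduce $\alpha^e \in L$, and hence $a^e = (\alpha^e)^n \in L^n$. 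By minimality of $\ell$ this forces $\ell \leq e = [L(\alpha):L]$.

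Combining both inequalities gives $[L(\alpha) : L] = \ell$. Now $f := X^\ell - \alpha^\ell$ lies in $L[X]$ by the first step, is monic of degree $\ell = [L(\alpha):L]$, and vanishes at $\alpha$, so it must coincide with the minimal polynomial of $\alpha$ over $L$; in particular it is irreducible, and $L(\alpha) \cong L[X] / (f)$ is the standard isomorphism. There is no real obstacle here; the only delicate point is the use of $\zeta_n \in L$ to move back and forth between the conditions $\alpha^\ell \in L$ and $a^\ell \in L^n$, which would fail to be equivalent over a base field lacking the full group of $n$th roots of unity.
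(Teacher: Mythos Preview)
Your argument is correct and is precisely the standard Kummer-theory proof; the paper itself does not spell it out but simply cites Kummer theory (Birch's exposition) for the result. You have supplied exactly the details that reference would provide, so there is nothing to compare or correct.
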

\begin{proof}
	This facts follows from Kummer theory~\cite{MR0219507}.
\end{proof}

\subsection{Preliminaries on a Kummer extension}\label{subsec:a-kummer-extension}

Throughout this section, let $K$ be a quadratic extension of $\mathbb{Q}$ with $\Delta_K \notin \{-3, -4\}$.
Note that the condition on $\Delta_K$ implies that $K$ contains only two roots of unity (namely, $-1$ and $+1$).
This section is devoted to the study of the extension $K(\zeta_n, \gamma^{1/n}) / \mathbb{Q}$, where $n$ is a positive integer and $\gamma \in K$ with $|\!\Norm_K(\gamma)| = 1$.

\begin{lemma}\label{lem:sqrt-in-Kzetan}
    Let $\gamma \in K \setminus (\mathbb{Q} \cup K^2)$ with $\Norm_K(\gamma)\Delta_K \notin \mathbb{Q}^2$, and let $n$ be a positive integer.
    Write $\gamma = a + b\sqrt{\Delta_K}$, with $a,b \in \mathbb{Q}$.
    Then $\sqrt{\gamma} \in K(\zeta_n)$ if and only if $N \in \mathbb{Q}^2$, and $\sqrt{c} \in \mathbb{Q}(\zeta_n)$ or $\sqrt{d} \in \mathbb{Q}(\zeta_n)$, where $N := \Norm_K(\gamma)$, $c := (a - \sqrt{N}) / 2$, and $d := c / \!\Delta_K$.
\end{lemma}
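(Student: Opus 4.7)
The plan is to analyze the quartic polynomial $f(X) := X^4 - 2aX^2 + N$ satisfied by $\sqrt{\gamma}$ over $\mathbb{Q}$ and reduce everything to the behavior of $\sqrt{c}$ in the field $K(\zeta_n)$. For the direction showing $N \in \mathbb{Q}^2$ is necessary, I assume $N \notin \mathbb{Q}^2$ and first verify that $f$ is irreducible over $\mathbb{Q}$: since $\gamma \notin \mathbb{Q}$, $f$ admits no linear factor, and any factorization into two quadratics must have the form $(X^2 + pX + q)(X^2 - pX + q)$ with $q^2 = N$, forcing $N \in \mathbb{Q}^2$. I then apply Lemma~\ref{lem:galois-biquadratic} with $a' = -2a$ and $b' = N$: one computes $b'(a'^2 - 4b') = 4N(a^2 - N) = 4Nb^2\Delta_K$, whose square class is $N\Delta_K$, nonsquare by hypothesis. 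Hence the Galois group of $f$ over $\mathbb{Q}$ is $D_8$, non-abelian. Since $K(\zeta_n)/\mathbb{Q}$ is abelian, $\sqrt{\gamma}$ cannot lie in $K(\zeta_n)$.

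Now assume $N = s^2$ with $s \in \mathbb{Q}$ and set $c_+ := (a + s)/2$, so that $c + c_+ = a$ and $c\,c_+ = (a^2 - s^2)/4 = b^2\Delta_K/4$. For suitable choices of square roots one verifies $(\sqrt{c_+} + \sqrt{c})^2 = a + 2\sqrt{c\,c_+} = a + b\sqrt{\Delta_K} = \gamma$, so $\sqrt{\gamma} \in \mathbb{Q}(\sqrt{c}, \sqrt{c_+})$. The identity $c_+\, d = b^2/4$ yields $\sqrt{c_+} = \pm(b/2)/\sqrt{d}$, so $\mathbb{Q}(\sqrt{c_+}) = \mathbb{Q}(\sqrt{d})$. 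Moreover $c/d = \Delta_K$ gives $\sqrt{c}/\sqrt{d} = \pm\sqrt{\Delta_K}$, whence $K \subseteq \mathbb{Q}(\sqrt{c},\sqrt{d})$ and therefore $\mathbb{Q}(\sqrt{c}, \sqrt{d}) = K(\sqrt{c}) = K(\sqrt{d})$. The assumption $\gamma \notin K^2$ forces $[K(\sqrt{\gamma}):K] = 2$, and a degree comparison then gives $K(\sqrt{\gamma}) = K(\sqrt{c})$, so the condition $\sqrt{\gamma} \in K(\zeta_n)$ becomes $\sqrt{c} \in K(\zeta_n)$.

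To conclude, I split on whether $\sqrt{\Delta_K} \in \mathbb{Q}(\zeta_n)$, equivalently $\Delta_K \mid n$ by Lemma~\ref{lem:Qzetan}\ref{lem:Qzetan:ite1}. In the affirmative case $K(\zeta_n) = \mathbb{Q}(\zeta_n)$ and, since $\sqrt{c} = \pm\sqrt{d}\,\sqrt{\Delta_K}$, the two conditions $\sqrt{c} \in \mathbb{Q}(\zeta_n)$ and $\sqrt{d} \in \mathbb{Q}(\zeta_n)$ are both equivalent to $\sqrt{c} \in K(\zeta_n)$. In the negative case, $K(\zeta_n)/\mathbb{Q}(\zeta_n)$ is a quadratic extension with nontrivial automorphism sending $\sqrt{\Delta_K} \mapsto -\sqrt{\Delta_K}$; writing $\sqrt{c} = u + v\sqrt{\Delta_K}$ with $u, v \in \mathbb{Q}(\zeta_n)$ and squaring yields $c = u^2 + v^2\Delta_K + 2uv\sqrt{\Delta_K}$, which forces $uv = 0$ and thus either $\sqrt{c} = u \in \mathbb{Q}(\zeta_n)$ or $\sqrt{d} = \pm v \in \mathbb{Q}(\zeta_n)$; the converse implications are immediate from $\sqrt{\Delta_K} \in K(\zeta_n)$. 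The main obstacle will be the sign bookkeeping in the explicit formula for $\sqrt{\gamma}$ and carefully tracking the various degree-$2$ extensions, in particular using $b \neq 0$ (which follows from $\gamma \notin \mathbb{Q}$) and $\gamma \notin K^2$ to ensure that $\sqrt{c}, \sqrt{d} \notin K$, so that the field equality $K(\sqrt{\gamma}) = K(\sqrt{c})$ actually holds.
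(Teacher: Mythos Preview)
Your argument follows essentially the same route as the paper's. One small lacuna: in proving $f$ irreducible when $N\notin\mathbb{Q}^2$, your claim that any factorization into two quadratics has the shape $(X^2+pX+q)(X^2-pX+q)$ omits the case $(X^2+q)(X^2+s)$ with $q\neq s$; this is easily dispatched, since such a factorization would force $\{-q,-s\}=\{\gamma,\bar\gamma\}\subset\mathbb{Q}$, contradicting $\gamma\notin\mathbb{Q}$. The paper bypasses the factorization analysis entirely by observing that $\gamma\notin\mathbb{Q}\cup K^2$ directly gives $[\mathbb{Q}(\sqrt{\gamma}):\mathbb{Q}]=4$, so $f$ is already the minimal polynomial. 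The remainder of your proof---reducing $\sqrt{\gamma}\in K(\zeta_n)$ to $\sqrt{c}\in K(\zeta_n)$ via the field equality $K(\sqrt{\gamma})=K(\sqrt{c})$, then splitting on whether $\sqrt{\Delta_K}\in\mathbb{Q}(\zeta_n)$---matches the paper's argument in content (the paper merges your two cases into a single computation, but the substance is identical).
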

\begin{proof}
    First, note that $f = X^4 - 2aX^2 + N$ is the minimal polynomial of $\sqrt{\gamma}$ over $\mathbb{Q}$.
    Indeed, on the one hand, an easy computation shows that $f(\sqrt{\gamma}) = 0$; while, on the other hand, $\gamma \notin K^2$ implies that $[\mathbb{Q}(\sqrt{\gamma}) : \mathbb{Q}] = 4$, and so the claim follows.
    Let $L$ be the splitting field of $f$ over $\mathbb{Q}$ and put $G := \Gal(L / \mathbb{Q})$.
    Note that $K(\zeta_n)$ is an abelian extension over $\mathbb{Q}$, since it is the compositum of $\mathbb{Q}(\sqrt{\Delta_K})$ and $\mathbb{Q}(\zeta_n)$, which are abelian over $\mathbb{Q}$.

    Suppose that $N \notin \mathbb{Q}^2$.
    Note that $N((-2a)^2 - 4N) = N \Delta_K (2b)^2 \notin \mathbb{Q}^2$, by hypothesis.
    Then, by Lemma~\ref{lem:galois-biquadratic}, we have that $G \cong D_8$ and, in particular, $L / \mathbb{Q}$ is a nonabelian extension.
    Therefore, we have that $\sqrt{\gamma} \notin K(\zeta_n)$.
    Indeed, if $\sqrt{\gamma} \in K(\zeta_n)$ then, since $K(\zeta_n) / \mathbb{Q}$ is Galois, we get that $L \subseteq K(\zeta_n)$, which in turn implies that $L/\mathbb{Q}$ is an abelian extension, but this is absurd.

    Suppose that $N \in \mathbb{Q}^2$.
    Then, by Lemma~\ref{lem:galois-biquadratic}, we have that $G \cong C_2 \times C_2$ and, in particular, $[L : \mathbb{Q}] = |G| = 4$.
    Consequently, we have that $L = \mathbb{Q}(\sqrt{\gamma})$.
    Put $e := (a + \sqrt{N}) / 2$.
    It can be easily checked that $\sqrt{\gamma} = s \sqrt{c} + t\sqrt{e}$ for the right choice of signs $s, t \in \{-1, +1\}$.
    Hence, we have that $L \subseteq \mathbb{Q}(\!\sqrt{c}, \sqrt{e}) = \mathbb{Q}(\!\sqrt{c}, \sqrt{d})$, where the last equality follows from the identity $e = d^{-1} (b/2)^2$ (note that $d \neq 0$ since $\gamma \notin \mathbb{Q}$).
    Furthermore, since $[\mathbb{Q}(\!\sqrt{c}, \sqrt{d}) : \mathbb{Q}] \leq 4$, we get that $L = \mathbb{Q}(\!\sqrt{c}, \sqrt{d})$.

    Suppose that $\sqrt{\gamma} \in K(\zeta_n)$.
    Then, since $K(\zeta_n) / \mathbb{Q}$ is Galois, we have that $L \subseteq K(\zeta_n)$ and so $\sqrt{c} \in K(\zeta_n) = \mathbb{Q}(\zeta_n)(\!\sqrt{\Delta_K})$.
    Hence, $\sqrt{c} = x + y\sqrt{\Delta_K}$ for some $x,y \in \mathbb{Q}(\zeta_n)$.
    If $y = 0$ then $\sqrt{c} = x \in \mathbb{Q}(\zeta_n)$.
    If $x = 0$ and $y \neq 0$, then $\sqrt{d} = \pm \sqrt{c} / \!\sqrt{\Delta_K} = \pm y \in \mathbb{Q}(\zeta_n)$.
    If $x \neq 0$ and $y \neq 0$, then $\sqrt{\Delta_K} = (2xy)^{-1}(c - x^2 - y^2\Delta_K) \in \mathbb{Q}(\zeta_n)$, and so $\sqrt{c} = x + y \sqrt{\Delta_K} \in \mathbb{Q}(\zeta_n)$.
    Hence, in every case, we have that $\sqrt{c} \in \mathbb{Q}(\zeta_n)$ or $\sqrt{d} \in \mathbb{Q}(\zeta_n)$.

    Suppose that $\sqrt{c} \in \mathbb{Q}(\zeta_n)$ or $\sqrt{d} \in \mathbb{Q}(\zeta_n)$.
    If $\sqrt{c} \in \mathbb{Q}(\zeta_n)$ then, recalling that $d = c / \!\Delta_K$ and $K = \mathbb{Q}(\!\sqrt{\Delta_K})$, we get that $\sqrt{c}, \sqrt{d} \in K(\zeta_n)$.
    Therefore, $\sqrt{\gamma} \in L = \mathbb{Q}(\!\sqrt{c}, \sqrt{d}) \subseteq K(\zeta_n)$, so that $\sqrt{\gamma} \in K(\zeta_n)$.
    If $\sqrt{d} \in \mathbb{Q}(\zeta_n)$ then a similar reasoning yields again that $\sqrt{\gamma} \in K(\zeta_n)$.
\end{proof}

\begin{lemma}\label{lem:Qsqrt-in-Kzetan}
    Let $a \in \mathbb{Q} \setminus \mathbb{Q}^2$.
    Then $\sqrt{a} \in K(\zeta_n)$ if and only if $\sqrt{a} \in \mathbb{Q}(\zeta_n)$ or $\sqrt{a / \!\Delta_K} \in \mathbb{Q}(\zeta_n)$.
\end{lemma}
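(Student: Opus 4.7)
The plan is to mimic the final paragraph of the proof of Lemma~\ref{lem:sqrt-in-Kzetan}: express any putative element of $K(\zeta_n)$ in the basis $\{1, \sqrt{\Delta_K}\}$ of $K(\zeta_n)$ over $\mathbb{Q}(\zeta_n)$ and separate rational and irrational parts.

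For the ``if'' direction, note that if $\sqrt{a} \in \mathbb{Q}(\zeta_n)$ then trivially $\sqrt{a} \in K(\zeta_n)$; while if $\sqrt{a/\Delta_K} \in \mathbb{Q}(\zeta_n)$, then $\sqrt{a} = \pm \sqrt{\Delta_K} \cdot \sqrt{a/\Delta_K} \in K(\zeta_n)$ since $\sqrt{\Delta_K} \in K \subseteq K(\zeta_n)$.

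For the ``only if'' direction, assume $\sqrt{a} \in K(\zeta_n)$. Since $K(\zeta_n) = \mathbb{Q}(\zeta_n)(\sqrt{\Delta_K})$ has degree at most $2$ over $\mathbb{Q}(\zeta_n)$, we may write $\sqrt{a} = x + y\sqrt{\Delta_K}$ with $x, y \in \mathbb{Q}(\zeta_n)$. Squaring and using that $a \in \mathbb{Q}$ gives $2xy\sqrt{\Delta_K} \in \mathbb{Q}(\zeta_n)$, so either $\sqrt{\Delta_K} \in \mathbb{Q}(\zeta_n)$ or $xy = 0$. If $\sqrt{\Delta_K} \in \mathbb{Q}(\zeta_n)$, then $K(\zeta_n) = \mathbb{Q}(\zeta_n)$ and the conclusion $\sqrt{a} \in \mathbb{Q}(\zeta_n)$ is immediate. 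Otherwise, $x = 0$ or $y = 0$: in the former case $\sqrt{a/\Delta_K} = \pm y \in \mathbb{Q}(\zeta_n)$, and in the latter case $\sqrt{a} = x \in \mathbb{Q}(\zeta_n)$.

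There is no real obstacle here; the hypothesis $a \notin \mathbb{Q}^2$ is used only to ensure $\sqrt{a} \notin \mathbb{Q}$ so that the statement is meaningful, and the assumption $\Delta_K \notin \{-3,-4\}$ from the surrounding discussion is not needed in this particular lemma. The argument is completely parallel to the decomposition step already carried out in the proof of Lemma~\ref{lem:sqrt-in-Kzetan}, just applied to an element of $\mathbb{Q}$ rather than of $K$.
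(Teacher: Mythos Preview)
Your proof is correct and essentially identical to the paper's: both write $\sqrt{a}=x+y\sqrt{\Delta_K}$ with $x,y\in\mathbb{Q}(\zeta_n)$, square, and split into the cases $y=0$, $x=0$, and $xy\neq 0$ (the last forcing $\sqrt{\Delta_K}\in\mathbb{Q}(\zeta_n)$). Your observation that this merely replays the decomposition step from Lemma~\ref{lem:sqrt-in-Kzetan} is exactly what the paper does as well.
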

\begin{proof}
    Suppose that $\sqrt{a} \in K(\zeta_n)$.
    Then $\sqrt{a} = x + y \sqrt{\Delta_K}$ for some $x, y \in \mathbb{Q}(\zeta_n)$.
    If $y = 0$ then $\sqrt{a} = x \in \mathbb{Q}(\zeta_n)$.
    If $x = 0$ and $y \neq 0$, then $\sqrt{a / \!\Delta_K} = \pm y \in \mathbb{Q}(\zeta_n)$.
    If $x \neq 0$ and $y \neq 0$, then $\sqrt{\Delta_K} = (2xy)^{-1}(a - x^2 - y^2\Delta_K) \in \mathbb{Q}(\zeta_n)$, and so $\sqrt{a} = x + y \sqrt{\Delta_K} \in \mathbb{Q}(\zeta_n)$.

    Suppose that $\sqrt{a} \in \mathbb{Q}(\zeta_n)$ or $\sqrt{a / \!\Delta_K} \in \mathbb{Q}(\zeta_n)$.
    Then it follows easily that $\sqrt{a} \in K(\zeta_n)$, since $K(\zeta_n) = \mathbb{Q}(\!\sqrt{\Delta_K}, \zeta_n)$.
\end{proof}

\begin{lemma}\label{lem:notin-Kzetan-but-in-Kzeta2n}
    Let $\gamma \in K$ with $|\!\Norm_K(\gamma)| = 1$, and let $n$ be a positive integer.
    If $\sqrt{\gamma} \notin K(\zeta_n)$ and $\sqrt{\gamma} \in K(\zeta_{2n})$, then $\gamma \notin \mathbb{Q} \cup K^2$, $\Norm_K(\gamma) = 1$, and $\nu_2(n) \in \{1, 2\}$.
\end{lemma}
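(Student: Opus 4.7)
The plan is to verify the three conclusions in turn, with Lemma~\ref{lem:sqrt-in-Kzetan} as the workhorse and Lemma~\ref{lem:Qzetan}\ref{lem:Qzetan:ite2} supplying the finish. First, $\gamma \notin K^2$ is immediate: otherwise $\sqrt{\gamma} \in K \subseteq K(\zeta_n)$ would contradict the first hypothesis. For $\gamma \notin \mathbb{Q}$, the condition $|\Norm_K(\gamma)| = \gamma^2 = 1$ forces $\gamma \in \{-1, +1\}$; the value $\gamma = 1$ is ruled out by $\sqrt{\gamma} = 1 \in K(\zeta_n)$, while $\gamma = -1$ has to be handled via Lemma~\ref{lem:Qsqrt-in-Kzetan} together with the contextual fact that for a nondegenerate Lucas sequence with $a_1 \neq 0$ one cannot have $\alpha = -\beta$, i.e.\ $\gamma = -1$.

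With $\gamma \in K \setminus (\mathbb{Q} \cup K^2)$ established, set $N := \Norm_K(\gamma) \in \{-1, +1\}$. Before invoking Lemma~\ref{lem:sqrt-in-Kzetan}, I would verify its side condition $N \Delta_K \notin \mathbb{Q}^2$: for $N = +1$ this is the non-squareness of $\Delta_K$; for $N = -1$, $-\Delta_K$ being a rational square would force $K = \mathbb{Q}(\mathbf{i})$, i.e.\ $\Delta_K = -4$, which is excluded by the standing hypothesis of Section~\ref{subsec:a-kummer-extension}. Lemma~\ref{lem:sqrt-in-Kzetan} then applies to $\sqrt{\gamma} \in K(\zeta_{2n})$ and directly yields $N \in \mathbb{Q}^2$, hence $N = 1$.

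For the last conclusion, write $\gamma = a + b \sqrt{\Delta_K}$ with $a, b \in \mathbb{Q}$ and set $c := (a - 1)/2$ and $d := c / \Delta_K$. Reapplying Lemma~\ref{lem:sqrt-in-Kzetan} (with the established $N = 1$, so $\sqrt{N} = 1$) to both hypotheses simultaneously shows that neither $\sqrt{c}$ nor $\sqrt{d}$ lies in $\mathbb{Q}(\zeta_n)$, yet at least one of them lies in $\mathbb{Q}(\zeta_{2n})$. Whichever radical witnesses this transition fulfills the hypothesis of Lemma~\ref{lem:Qzetan}\ref{lem:Qzetan:ite2}, after clearing a square denominator to pass to an integer radicand (which does not change the generated subfield), and that lemma immediately concludes $\nu_2(n) \in \{1, 2\}$.

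The main obstacle I foresee is the $\gamma = -1$ edge case: a purely field-theoretic reading of the hypotheses permits, for instance, $K = \mathbb{Q}(\!\sqrt{5})$, $\gamma = -1$, $n = 2$, in which $\sqrt{\gamma} = \mathbf{i}$ is not in $K = K(\zeta_2)$ but is in $K(\zeta_4)$. Cleanly dismissing this configuration appears to require invoking the Lucas-sequence origin of $\gamma$ (or a small strengthening of the statement to $\gamma \notin \mathbb{Q}$); once it is dismissed, every other step is a routine ``unpack and repack'' through Lemma~\ref{lem:sqrt-in-Kzetan}.
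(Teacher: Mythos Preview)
Your proof follows the paper's route exactly: dismiss $\gamma \in K^2$ and $\gamma \in \mathbb{Q}$ directly, check the side hypothesis $\Norm_K(\gamma)\Delta_K \notin \mathbb{Q}^2$ using the standing assumption $\Delta_K \neq -4$, apply Lemma~\ref{lem:sqrt-in-Kzetan} at $2n$ to obtain $N \in \mathbb{Q}^2$ (hence $\Norm_K(\gamma)=1$), and then apply it at both $n$ and $2n$ to feed a rational radical $\sqrt{c}$ or $\sqrt{d}$ into Lemma~\ref{lem:Qzetan}\ref{lem:Qzetan:ite2}.

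Your flagged edge case $\gamma = -1$ is a genuine issue with the lemma as stated, and your counterexample $K = \mathbb{Q}(\sqrt{5})$, $n = 2$ is valid: there the hypotheses hold but $\gamma \in \mathbb{Q}$. The paper's own proof does not actually handle it either---the line ``$\gamma = \pm 1 \in K(\zeta_n)$, which is impossible'' disposes only of $\gamma = 1$ (via $\sqrt{1}\in K(\zeta_n)$) and is silent on $\gamma = -1$. As you correctly anticipate, the defect is harmless downstream: in the sole use of this lemma (inside the proof of Lemma~\ref{lem:K-cap-Kzetan-to-n}) the element playing the role of $\gamma$ cannot be $\pm 1$ once one traces back to a nondegenerate Lucas sequence, so your proposed fix of importing that contextual hypothesis is exactly what is needed.
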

\begin{proof}
    Suppose that $\sqrt{\gamma} \notin K(\zeta_n)$ and $\sqrt{\gamma} \in K(\zeta_{2n})$.
    If $\gamma \in \mathbb{Q}$ then $|\!\Norm_K(\gamma)| = 1$ implies that $\gamma = \pm 1 \in K(\zeta_n)$, which is impossible.
    Hence, we get that $\gamma \notin \mathbb{Q}$.
    Furthermore, from $\sqrt{\gamma} \notin K(\zeta_n)$ it follows that $\gamma \notin K^2$.
    Moreover, we have that $\Norm_K(\gamma) \Delta_K \notin \mathbb{Q}^2$, since $\Norm_K(\gamma) = \pm 1$ and $\Delta_K \neq -4$ by the hypothesis on $K$.
    Hence, we can apply Lemma~\ref{lem:sqrt-in-Kzetan} and, with the same notation of Lemma~\ref{lem:sqrt-in-Kzetan}, we get that $\Norm_K(\gamma) = 1$, $\sqrt{c} \notin \mathbb{Q}(\zeta_n)$, $\sqrt{d} \notin \mathbb{Q}(\zeta_n)$; and $\sqrt{c} \in \mathbb{Q}(\zeta_{2n})$ or $\sqrt{d} \in \mathbb{Q}(\zeta_{2n})$.
    Then the claim follows from Lemma~\ref{lem:Qzetan}\ref{lem:Qzetan:ite2}.
\end{proof}

The proof of the next lemma is similar to that of \cite[Lemma~4]{MR284417}, which characterizes rational numbers in $\mathbb{Q}(\zeta_n)^n$.

\begin{lemma}\label{lem:K-cap-Kzetan-to-n}
    Let $\gamma \in K$ with $|\!\Norm_K(\gamma)| = 1$, and let $n$ be a positive integer.
    Then $\gamma \in K(\zeta_n)^n$ if and only if:
    \begin{enumerate}

        \item\label{ite:power1} $n$ is odd and $\gamma = \delta^n$ for some $\delta \in K$; or

        \item\label{ite:power2} $n$ is even and $\gamma = \delta^{n/2}$ for some $\delta \in K \cap K(\zeta_n)^2$; or

        \item\label{ite:power3} $\nu_2(n) = 2$ and $\gamma = -(2\delta)^{n /2}$ for some $\delta \in K \cap K(\zeta_n)^2$.

    \end{enumerate}
\end{lemma}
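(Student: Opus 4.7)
I would establish the two directions of the biconditional in turn.

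\emph{The ``if'' direction} is a direct construction of $\xi \in K(\zeta_n)$ with $\xi^n = \gamma$. In case~\ref{ite:power1}, simply take $\xi := \delta$. In case~\ref{ite:power2}, write $\delta = \varepsilon^2$ for some $\varepsilon \in K(\zeta_n)$; then $\xi := \varepsilon$ satisfies $\xi^n = (\varepsilon^2)^{n/2} = \delta^{n/2} = \gamma$. In case~\ref{ite:power3}, Lemma~\ref{lem:Qzetan}\ref{lem:Qzetan:ite4} yields $\eta \in \mathbb{Q}(\zeta_n)$ with $\eta^2 = 2\zeta_n$; since $n$ is even, $\zeta_n^{n/2} = -1$, and hence $\eta^n = (2\zeta_n)^{n/2} = -2^{n/2}$. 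Writing $\delta = \varepsilon^2$ as before, the element $\xi := \eta\varepsilon$ satisfies $\xi^n = \eta^n \varepsilon^n = -2^{n/2} \delta^{n/2} = -(2\delta)^{n/2} = \gamma$.

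\emph{The ``only if'' direction} is the delicate part. Assume $\gamma = \xi^n$ with $\xi \in K(\zeta_n)$. For each $\sigma \in G := \Gal(K(\zeta_n)/K)$, the equality $\sigma(\xi)^n = \sigma(\gamma) = \gamma = \xi^n$ makes $c(\sigma) := \sigma(\xi)/\xi$ an $n$-th root of unity, giving a $1$-cocycle $c\colon G \to \mu_n$. Replacing $\xi$ by $\zeta\xi$ (for $\zeta \in \mu_n$) preserves $\xi^n = \gamma$ and alters $c$ by the coboundary $\sigma \mapsto \sigma(\zeta)/\zeta$, so only the class $[c] \in H^1(G, \mu_n)$ is intrinsic to $\gamma$. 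When $n$ is odd, I expect to show that $[c] = 0$, so that some choice of $\xi$ lies in $K$, yielding \ref{ite:power1} with $\delta := \xi$. When $n$ is even, I would pass to the induced cocycle $c'(\sigma) := \sigma(\xi^2)/\xi^2 = c(\sigma)^2 \in \mu_{n/2}$ and argue that $[c'] \in H^1(G, \mu_{n/2})$ is either trivial---in which case $\xi^2$ can be adjusted (using the freedom of multiplication by $\mu_n$) into an element of $K$, giving \ref{ite:power2} with $\delta := \xi^2$---or equal to the class of $\sigma \mapsto \sigma(2\zeta_n)/(2\zeta_n)$, which by Lemma~\ref{lem:Qzetan}\ref{lem:Qzetan:ite4} can only occur when $\nu_2(n) = 2$; in that case, $(\xi/\eta)^2$ (with $\eta$ as in the sufficiency part) can be similarly adjusted into $K$, giving \ref{ite:power3} with $\delta := (\xi/\eta)^2$.

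\textbf{Main obstacle.} The crux is ruling out any further cohomology class for $c'$. I expect this to rely essentially on two hypotheses: that the only roots of unity in $K$ are $\pm 1$ (from $\Delta_K \notin \{-3, -4\}$), and the norm hypothesis $|\!\Norm_K(\gamma)| = 1$, which forces a product like $\xi\tau(\xi)$ (with $\tau$ an extension of the nontrivial element of $\Gal(K/\mathbb{Q})$) to be a root of unity in a cyclotomic field, tightly restricting the possible $[c']$. The remaining verifications---that the $\delta$ so produced lies in $K \cap K(\zeta_n)^2$---are automatic in both even subcases, since $\delta$ is constructed as the square of an element of $K(\zeta_n)$.
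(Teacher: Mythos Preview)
Your ``if'' direction is correct and essentially matches the paper's (the paper takes $\varepsilon = (1+\mathbf{i})\sqrt{\delta}$ in case~\ref{ite:power3}, which agrees with your $\eta\varepsilon$ up to an $n$th root of unity). For the ``only if'' direction you take a genuinely different route: the paper never works cohomologically but instead applies Lemma~\ref{lem:abelian} to the abelian extension $K(\zeta_n,\gamma^{1/n})=K(\zeta_n)$ to conclude immediately that $\gamma^m\in K^n$ with $m=|\mu_n\cap K|=(n,2)$. For $n$ odd this gives~\ref{ite:power1} outright; for $n$ even it gives $\gamma=\pm\delta^{n/2}$ with $\delta\in K$, and the remainder is a sign analysis using Lemma~\ref{lem:notin-Kzetan-but-in-Kzeta2n} (this is where $|\!\Norm_K(\gamma)|=1$ enters) to force $\nu_2(n)\le 2$, together with Lemma~\ref{lem:Qzetan}\ref{lem:Qzetan:ite3}--\ref{lem:Qzetan:ite4} to absorb the minus sign into~\ref{ite:power2} or~\ref{ite:power3}.

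Your cohomological framework is legitimate, but the proposal has a genuine gap precisely where you flag the ``main obstacle.'' Under the Kummer isomorphism $H^1(G,\mu_{n/2})\cong (K^*\cap K(\zeta_n)^{*\,n/2})/K^{*\,n/2}$, the class $[c']$ is simply the image of $\gamma$, so your dichotomy ``$[c']$ is trivial or equals the class of $\sigma\mapsto\sigma(2\zeta_n)/(2\zeta_n)$'' is equivalent to $\gamma\in\{\pm 1\}\cdot K^{*\,n/2}$, i.e.\ to $\gamma^2\in K^{*n}$---and you have given no argument for this. Your norm observation $\xi\tau(\xi)\in\mu_{2n}$ is correct but only recovers $\Norm_K(\gamma)=\pm 1$; it does not by itself force $\gamma^2$ to be an $n$th power in $K$. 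Separately, your appeal to Lemma~\ref{lem:Qzetan}\ref{lem:Qzetan:ite4} to deduce $\nu_2(n)=2$ is backwards: that lemma produces $\eta$ with $\eta^2=2\zeta_n$ \emph{given} $\nu_2(n)=2$, whereas the cocycle $\sigma\mapsto\sigma(\zeta_n)/\zeta_n$ is nontrivial in $H^1(G,\mu_{n/2})$ for every $\nu_2(n)\ge 2$. The upper bound $\nu_2(n)\le 2$ requires an independent argument (in the paper, Lemma~\ref{lem:notin-Kzetan-but-in-Kzeta2n}). To repair your approach you would need either to invoke Lemma~\ref{lem:abelian} to obtain $\gamma^{(n,2)}\in K^n$ and then translate back, or to compute the relevant $H^1$ directly---neither of which the proposal carries out.
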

\begin{proof}
    First, suppose that $\gamma \in K(\zeta_n)^n$.
    Let us prove that one of \ref{ite:power1}--\ref{ite:power3} holds.
    We have that $\gamma = \varepsilon^n$ for some $\varepsilon \in K(\zeta_n)$.
    Consequently, we get that $K(\zeta_n, \gamma^{1/n}) = K(\zeta_n, \varepsilon) = K(\zeta_n)$,
    which is an abelian extension of $K$.
    Hence, by Lemma~\ref{lem:abelian}, we have that $\gamma^m \in K^n$, where $m$ is the number of $n$th roots of unity in $K$.
    Therefore, there exists $\delta \in K$ such that $\gamma^m = \delta^n$.
    Note that $m = (n, 2)$, since, by hypothesis, $K$ contains only two roots of unity.
    Furthermore, note that $|\!\Norm_K(\delta)| = 1$, since $|\!\Norm_K(\gamma)| = 1$.
    We have to consider several cases.

    If $n$ is odd, then $m = 1$ and we have \ref{ite:power1}.
    Suppose that $n$ is even, so that $\gamma^2 = \delta^n$.
    Therefore, $\gamma = s \delta^{n / 2}$ for some $s \in \{-1, +1\}$.
    Moreover, we have that $(\!\sqrt{\delta})^n = \delta^{n/2} = s\gamma = s\varepsilon^n$.

    If $s = 1$, then $(\!\sqrt{\delta})^n = \varepsilon^n$ and so $\sqrt{\delta} = \eta \varepsilon$, where $\eta$ is a $n$th root of unity.
    Hence, $\sqrt{\delta} \in K(\zeta_n)$ and we have \ref{ite:power2}.
    Suppose that $s = -1$, so that $(\sqrt{\delta})^n = -\varepsilon^n$.
    Hence, $\sqrt{\delta} = \zeta_{2n} \eta \varepsilon$ where $\eta$ is a $n$th root of unity.
    Consequently, we have that $\zeta_{2n} \sqrt{\delta} \in K(\zeta_n)$ and so $\sqrt{\delta} \in K(\zeta_{2n})$.

    Let us prove that $\nu_2(n) \in \{1, 2\}$.
    If $\zeta_{2n} \notin K(\zeta_n)$, then $\sqrt{\delta} \notin K(\zeta_n)$ (otherwise from $\zeta_{2n} \sqrt{\delta} \in K(\zeta_n)$ we would get that $\zeta_{2n} \in K(\zeta_n)$).
    Hence, we have that $\sqrt{\delta} \notin K(\zeta_n)$ and $\sqrt{\delta} \in K(\zeta_{2n})$.
    Therefore, Lemma~\ref{lem:notin-Kzetan-but-in-Kzeta2n} yields that $\nu_2(n) \in \{1, 2\}$.
    If $\zeta_{2n} \in K(\zeta_n)$ then $K(\zeta_{2n}) = K(\zeta_{n})$.
    Thus $[K(\zeta_{2n}) : \mathbb{Q}] = [K(\zeta_n) : \mathbb{Q}]$, which implies that $\sqrt{\Delta_K} \notin \mathbb{Q}(\zeta_n)$ and $\sqrt{\Delta_K} \in \mathbb{Q}(\zeta_{2n})$.
    Therefore, Lemma~\ref{lem:Qzetan}\ref{lem:Qzetan:ite2} yields that $\nu_2(n) \in \{1, 2\}$.
    The claim is proved.

    Recall that $\zeta_{2n} \sqrt{\delta} \in K(\zeta_n)$ and thus $\zeta_n \delta \in K(\zeta_n)^2$.
    If $\nu_2(n) = 1$ then, by Lemma~\ref{lem:Qzetan}\ref{lem:Qzetan:ite3}, we have that $-\zeta_n \in \mathbb{Q}(\zeta_n)^2$, and it follows that $\delta^\prime := -\delta \in K(\zeta_n)^2$.
    Therefore, also using that $n / 2$ is odd, we get that $\gamma = -\delta^{n/2} = (-\delta)^{n / 2} = (\delta^\prime)^{n / 2}$, and we have \ref{ite:power2}.
    If $\nu_2(n) = 2$ then, by Lemma~\ref{lem:Qzetan}\ref{lem:Qzetan:ite4}, we have that $2\zeta_n \in \mathbb{Q}(\zeta_n)^2$, and it follows that $\delta^\prime := \delta/2 \in K(\zeta_n)^2$.
    Hence, we get that $\gamma = -\delta^{n/2} = -(2\delta^\prime)^{n / 2}$, and we have \ref{ite:power3}.

    It remains only to prove that each of \ref{ite:power1}-\ref{ite:power3} implies that $\gamma \in K(\zeta_n)^n$, that is, $\gamma = \varepsilon^n$ for some $\varepsilon \in K(\zeta_n)$.
    In order to do so, it suffices to pick $\varepsilon$ equal to $\delta$, $\sqrt{\delta}$, and $(1+\mathbf{i})\sqrt{\delta}$, for \ref{ite:power1}, \ref{ite:power2}, and \ref{ite:power3}, respectively.
\end{proof}

\begin{lemma}\label{lem:gamma0-repr}
    Every nonzero $\gamma \in K$ can be written as $\gamma = s\gamma_0^h$, where $s \in \{-1, +1\}$, $h$ is a positive integer, and $\gamma_0 \in K$ is not a power in $K$.
    Moreover, this representation is unique except peharps for the sign of $\gamma_0$.
    Furthermore, we have that $\gamma^\ell = z \delta^m$, for some $\delta \in K$ and some integers $\ell, m > 0$ and $z \in \{-1, +1\}$, if and only if $m \mid \ell h$ and
    \begin{enumerate}
        \item\label{lem:gamma0-repr:ite1} $m$ is even, $s^\ell = z$, and $\delta = \pm \gamma_0^{\ell h / m}$; or
        \item\label{lem:gamma0-repr:ite2} $m$ is odd and $\delta = s^\ell z \gamma_0^{\ell h / m}$.
    \end{enumerate}
\end{lemma}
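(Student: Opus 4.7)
My plan is to establish the representation $\gamma = s\gamma_0^h$ by a maximality argument, then derive the characterization in the second part from the uniqueness of this representation. Throughout, the key structural fact is that the hypothesis $\Delta_K \notin \{-3,-4\}$ forces the torsion subgroup of $K^*$ to be exactly $\{-1,+1\}$, so $K^*/\{\pm 1\}$ is torsion-free.

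For existence, I will consider the set $H := \{h \geq 1 : \gamma = \pm\delta^h \text{ for some } \delta \in K\}$ (restricting to $\gamma \neq \pm 1$, the case used in the paper) and show that $H$ is finite. If $\gamma$ is not a unit of $\mathcal{O}_K$, a prime ideal $\mathfrak{p}$ with $v_\mathfrak{p}(\gamma) \neq 0$ yields $h \mid v_\mathfrak{p}(\gamma)$. If $\gamma$ is a unit, Dirichlet's theorem applied to the quadratic field $K$ gives $\mathcal{O}_K^*/\{\pm 1\} \cong \mathbb{Z}^r$ with $r \leq 1$; the case $r = 0$ forces $\gamma = \pm 1$ (excluded), while $r = 1$ gives $\gamma = \pm\varepsilon_0^k$ for a fundamental unit $\varepsilon_0$ and $k \neq 0$, forcing $h \mid k$. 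Setting $h := \max H$ and choosing $\gamma_0$ with $\gamma = s\gamma_0^h$, I verify $\gamma_0$ is not a power in the sense that $\gamma_0 \neq \pm\varepsilon^j$ for any $j \geq 2$: otherwise $jh \in H$ contradicts maximality.

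For uniqueness, I need to show that any representation $\gamma = s\gamma_0^h$ with $\gamma_0$ not a power has $h = \max H$. If $h' > h$ also lies in $H$, writing $d = \gcd(h,h')$ and canceling the $d$-th power in $K^*/\{\pm 1\}$ (valid since this group is torsion-free) reduces matters to an equation $\gamma_0^u = \pm\varepsilon^v$ with $\gcd(u,v) = 1$, $v \geq 2$; then a Bezout combination expresses $\gamma_0$ as $\pm(\varepsilon^a\gamma_0^b)^v$, a power, contradiction. Two representations therefore share the same $h$, and then $\gamma_0^h = \pm\gamma_0'^h$ together with torsion-freeness of $K^*/\{\pm 1\}$ yields $\gamma_0 = \pm\gamma_0'$.

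For the second part, the backward direction is a direct substitution: in case (i), $m$ even gives $\delta^m = (\pm\gamma_0^{\ell h/m})^m = \gamma_0^{\ell h}$ and $z\delta^m = s^\ell\gamma_0^{\ell h} = \gamma^\ell$; in case (ii), $m$ odd gives $\delta^m = (s^\ell z)^m\gamma_0^{\ell h} = s^\ell z\,\gamma_0^{\ell h}$ and again $z\delta^m = \gamma^\ell$. For the forward direction, I apply the existence-uniqueness to $\delta = s_\delta\delta_0^{h_\delta}$ and compare both sides of $\gamma^\ell = z\delta^m$ in canonical form: $s^\ell\gamma_0^{\ell h} = zs_\delta^m\delta_0^{mh_\delta}$. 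Since $\gamma_0$ and $\delta_0$ are not powers, uniqueness forces $mh_\delta = \ell h$ (hence $m \mid \ell h$) and $\delta_0 = \pm\gamma_0$, so $\delta = \pm\gamma_0^{\ell h/m}$, after which splitting on the parity of $m$ and matching the remaining $\pm$ signs produces (i) or (ii). The principal subtlety is the sign bookkeeping arising from the ambiguity $\gamma_0 \mapsto -\gamma_0$, which collapses for even exponents but not for odd ones; this is precisely the dichotomy encoded in (i) versus (ii), and its clean handling relies on $\Delta_K \notin \{-3,-4\}$ ruling out further roots of unity in $K$.
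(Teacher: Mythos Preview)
Your proof is correct and takes a genuinely different route from the paper's. The paper invokes the Dirichlet--Chevalley--Hasse theorem on $S$-units: it chooses a finite set $S$ of places so that $\gamma$ is an $S$-unit, writes $\gamma = s\,\varepsilon_1^{a_1}\cdots\varepsilon_w^{a_w}$ in terms of a fundamental system of $S$-units, sets $h := \gcd(a_1,\dots,a_w)$ and $\gamma_0 := \varepsilon_1^{a_1/h}\cdots\varepsilon_w^{a_w/h}$, and then reads off existence, uniqueness, and the characterization of $\gamma^\ell = z\delta^m$ by writing $\delta$ in the same coordinates and comparing exponents. Your approach avoids $S$-units entirely: you bound $H$ by a prime-ideal valuation in the non-unit case and by the ordinary Dirichlet unit theorem in the unit case, and you obtain uniqueness from torsion-freeness of $K^*/\{\pm 1\}$ via a B\'ezout trick. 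The paper's argument is more uniform (no unit/non-unit split) and generalizes verbatim to number fields of any degree; yours is more elementary and makes transparent exactly where $\Delta_K\notin\{-3,-4\}$ enters. Your explicit reading of ``not a power'' as $\gamma_0\neq\pm\varepsilon^j$ for $j\ge 2$ (equivalently, the image of $\gamma_0$ in $K^*/\{\pm1\}$ is not a proper power) matches what the paper's coordinate construction produces and is precisely the condition under which the uniqueness claim holds.
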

\begin{proof}
    Let $S$ be a finite set of nonequivalent normalized valuations of $K$ containing all the Archimedean valuations and all the valuations $|\cdot|_v$ such that $|\gamma|_v \neq 1$.
    Hence, by construction, $\gamma$ is an $S$-unit of $K$.
    Let $\varepsilon_1, \dots, \varepsilon_w$ be a fundamental system of $S$-units of $K$, where $w := |S| - 1$.
    By the Dirichlet--Chevalley--Hasse Theorem~\cite[Theorem 3.12]{MR2078267}, every $S$-unit of $K$ can be uniquely written as $s \varepsilon_1^{a_1} \cdots \varepsilon_w^{a_w}$, where $s \in \{-1, +1\}$ and $a_1, \dots, a_w \in \mathbb{Z}$.
    If $\gamma = s \varepsilon_1^{a_1} \cdots \varepsilon_w^{a_w}$, then $\gamma = s \gamma_0^h$, where $h := \gcd(a_1, \dots, a_w)$, $b_i := a_i / h$ for $i=1,\dots,w$, and $\gamma_0 := \varepsilon_1^{b_1} \cdots \varepsilon_w^{b_w}$ is not a power in $K$.
    Then the claim on the uniqueness of this representation follows easily.

    Suppose that $\gamma^\ell = z \delta^m$, for some $\delta \in K$ and some integers $\ell, m > 0$ and $z \in \{-1, +1\}$.
    Write $\delta = t \varepsilon_1^{c_1} \cdots \varepsilon_w^{c_w}$ for some $t \in \{-1,+1\}$ and $c_1, \dots, c_w \in \mathbb{Z}$.
    Then $\gamma^\ell = z \delta^m$ if and only if $s^\ell = z t^m$ and $b_i\ell h   = c_i m$ for $i=1,\dots,w$.
    In particular, we have that $m \mid \gcd(b_1 \ell h, \dots, b_w \ell h)$ and so $m \mid \ell h$.
    If $m$ is even and $s^\ell = z$, then the equality $s^\ell = z t^m$ is satisfies for every $t \in \{-1, +1\}$, and so $\delta = \pm \gamma_0^{\ell h / m}$, which is \ref{lem:gamma0-repr:ite1}.
    If $m$ is even and $s^\ell \neq z$, then the equality $s^\ell = z t^m$ is impossible.
    If $m$ is odd then $s^\ell = z t^m$ implies that $t = s^\ell z$ and so $\delta = s^\ell z \gamma_0^{\ell h / m}$, which is \ref{lem:gamma0-repr:ite2}.

    Vice versa, if $\delta \in K$ and $\ell, m > 0$ and $z \in \{-1, +1\}$ are integers such that $m \mid \ell h$ and either \ref{lem:gamma0-repr:ite1} or \ref{lem:gamma0-repr:ite2} holds, then it follows easily that $\gamma^\ell = z\delta^m$.
\end{proof}

The first part of the proof of the next lemma follows a strategy similar to the proof of \cite[Lemma~5]{MR284417}.

\begin{lemma}\label{lem:degree-and-existence}
    Let $\gamma \in K$ with $|\!\Norm_K(\gamma)| = 1$.
    Write $\gamma = s\gamma_0^h$, where $s \in \{-1, +1\}$, $h$ is a positive integer, and $\gamma_0 \in K$ is not a power in $K$ (see Lemma~\ref{lem:gamma0-repr}).
    Let $n$ be a positive integer and put $n^\prime := n / (n, 2h)$ and $h^\prime := 2h / (n, 2h)$.
    Then we have that
    \begin{equation}\label{equ:degree-Kzetangamma1n}
        \big[K(\zeta_n, \gamma^{1/n}) : K(\zeta_n) \big] = n^\prime \cdot
        \begin{cases}
            1 & \text{ if one of \ref{ite:C1}--\ref{ite:C4} holds; } \\
            2 & \text{ otherwise; }
        \end{cases}
    \end{equation}
    where the conditions in~\eqref{equ:degree-Kzetangamma1n} are the following
    \begin{enumerate}[label=(C\arabic*)]

		\item\label{ite:C1} $n$ is odd;

		\item\label{ite:C2} $s^{n^\prime} = 1$, $n$ is even, and $\gamma_0^{h^\prime} \in K(\zeta_n)^2$;

		\item\label{ite:C3} $s = -1$, $\nu_2(n) = 1$, and $-\gamma_0^{h^\prime} \in K(\zeta_n)^2$;

		\item\label{ite:C4} $s^{n^\prime} = -1$, $\nu_2(n) = 2$, and $2\gamma_0^{h^\prime} \in K(\zeta_n)^2$.

    \end{enumerate}
	Furthermore, let $\sigma_1 \in \Gal(K(\zeta_n) / \mathbb{Q})$ be the complex conjugation and, if $\sqrt{\Delta_K} \notin \mathbb{Q}(\zeta_n)$, let $\sigma_2 \in \Gal(K(\zeta_n) / \mathbb{Q})$ be the unique automorphism satisfying $\sigma_2(\zeta_n) = \zeta_n^{-1}$ and $\sigma_2(\!\sqrt{\Delta_k}) = -\overline{\sqrt{\Delta_k}}$, otherwise let $\sigma_2 := \sigma_1$.
	Then there exists $\sigma \in \Gal(K(\zeta_n, \gamma^{1/n}) / \mathbb{Q})$ such that $\sigma(\zeta_n) = \zeta_n^{-1}$ and $\sigma(\gamma^{1/n}) = \gamma^{-1/n}$ if and only if
	\begin{enumerate}[label=(D\arabic*)]

		\item\label{ite:D1} it holds \ref{ite:C1} and $\sigma_i\big(\gamma_0^{h^\prime\!/2}\big) = \gamma_0^{-h^\prime\!/2}$ for some $i \in \{1,2\}$;

		\item\label{ite:D2} it holds \ref{ite:C2} and $\sigma_i\!\Big(\!\sqrt{\smash[b]{\gamma_0^{h^\prime}}}\Big) = \sqrt{\smash[b]{\gamma_0^{h^\prime}}}^{-1}$ for some $i \in \{1,2\}$;

		\item\label{ite:D3} it holds \ref{ite:C3} and $\sigma_i\!\Big(\!\sqrt{\smash[b]{-\gamma_0^{h^\prime}}}\Big) = \sqrt{\smash[b]{-\gamma_0^{h^\prime}}}^{-1}$ for some $i \in \{1,2\}$;

		\item\label{ite:D4} it holds \ref{ite:C4} and $\sigma_i\!\Big(\!\sqrt{\smash[b]{2\gamma_0^{h^\prime}}}\Big) = 2\sqrt{\smash[b]{2\gamma_0^{h^\prime}}}^{-1}$ for some $i \in \{1,2\}$;

		\item\label{ite:D5} neither of \ref{ite:C1}--\ref{ite:C4} holds and $\sigma_i\big(\gamma_0^{h^\prime}\big) = \gamma_0^{-h^\prime}$ for some $i \in \{1,2\}$.

	\end{enumerate}
\end{lemma}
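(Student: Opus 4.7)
The plan is to compute the degree $[K(\zeta_n, \gamma^{1/n}) : K(\zeta_n)]$ first, and then, once an explicit form of $\beta := (\gamma^{1/n})^{\ell}$ (with $\ell$ equal to that degree) is available, to translate the existence of the required $\sigma$ into conditions on $\sigma_1(\beta)$ and $\sigma_2(\beta)$.

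By Lemma~\ref{lem:kummer-theory}, the degree equals the smallest $\ell \geq 1$ with $\gamma^\ell \in K(\zeta_n)^n$. The plan is to apply Lemma~\ref{lem:K-cap-Kzetan-to-n} to $\gamma^\ell$, which gives three alternative shapes -- $\delta^n$ for odd $n$, $\delta^{n/2}$ for even $n$, or $-(2\delta)^{n/2}$ for $\nu_2(n) = 2$ -- with $\delta \in K \cap K(\zeta_n)^2$ in the latter two, and to combine this with Lemma~\ref{lem:gamma0-repr} applied to $\gamma = s\gamma_0^h$ to convert each shape into an arithmetic condition on $\ell$. The divisibility constraint ``$m \mid \ell h$'' of Lemma~\ref{lem:gamma0-repr} forces $\ell$ to be a multiple of $n^\prime = n/(n, 2h)$, so I would first test $\ell = n^\prime$: the explicit $\delta$ supplied by Lemma~\ref{lem:gamma0-repr} then equals $s^{n^\prime}\gamma_0^{h^\prime/2}$ (for odd $n$), $\pm \gamma_0^{h^\prime}$ (for $\nu_2(n) \geq 2$), or $s^{n^\prime}\gamma_0^{h^\prime}$ (for $\nu_2(n) = 1$), and demanding $\delta \in K(\zeta_n)^2$ -- using that $-1 \in K(\zeta_n)^2$ whenever $\nu_2(n) \geq 2$, and the identity $2\zeta_n \in \mathbb{Q}(\zeta_n)^2$ from Lemma~\ref{lem:Qzetan}\ref{lem:Qzetan:ite4} when $\nu_2(n) = 2$ -- collapses precisely to the disjunction of \ref{ite:C1}--\ref{ite:C4}. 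If none of these holds, the identity $\gamma^{2n^\prime} = \gamma_0^{2n^\prime h} = (\gamma_0^{h^\prime})^n \in K^n$ shows that $\ell = 2n^\prime$ always works, yielding the stated degree formula~\eqref{equ:degree-Kzetangamma1n}.

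For the second half, set $\beta := (\gamma^{1/n})^\ell$, so that $\beta \in K(\zeta_n)$ and, by Lemma~\ref{lem:kummer-theory}, $K(\zeta_n, \gamma^{1/n}) \cong K(\zeta_n)[X]/(X^\ell - \beta)$. Any $\sigma$ with $\sigma(\zeta_n) = \zeta_n^{-1}$ restricts to $K(\zeta_n)$ as an automorphism fixing $\mathbb{Q}$, which must be either $\sigma_1$ or $\sigma_2$. The requirement $\sigma(\gamma^{1/n}) = \gamma^{-1/n}$ is equivalent to $\sigma|_{K(\zeta_n)}(\beta) = \beta^{-1}$: necessity is immediate from $\sigma((\gamma^{1/n})^\ell) = (\gamma^{-1/n})^\ell = \beta^{-1}$, and sufficiency follows because then $\gamma^{-1/n}$ is a root of $X^\ell - \sigma(\beta) = X^\ell - \beta^{-1}$ already lying in $K(\zeta_n)(\gamma^{1/n})$, so the morphism extends. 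Thus, existence of $\sigma$ reduces to finding $i \in \{1, 2\}$ with $\sigma_i(\beta) = \beta^{-1}$. In each of the five cases I would extract $\beta$ modulo $n$-th roots of unity -- this ambiguity is irrelevant because $\sigma_i(\zeta_n^k) = \zeta_n^{-k}$ inverts such factors automatically, and $\sigma_i$ fixes rational signs -- to obtain the representatives $s^{n^\prime}\gamma_0^{h^\prime/2}$, $\sqrt{\gamma_0^{h^\prime}}$, $\sqrt{-\gamma_0^{h^\prime}}$, a scalar multiple of $\sqrt{2\gamma_0^{h^\prime}}$, and $\gamma_0^{h^\prime}$ for \ref{ite:C1}--\ref{ite:C4} and the ``otherwise'' case respectively. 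Substituting and rewriting $\sigma_i(\beta) = \beta^{-1}$ then produces exactly \ref{ite:D1}--\ref{ite:D5}.

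The main obstacle I expect is case \ref{ite:C4}: extracting $\beta$ from $\beta^n = -(2\delta)^{n/2}$ with $\nu_2(n) = 2$ requires Lemma~\ref{lem:Qzetan}\ref{lem:Qzetan:ite4} and careful tracking of how the factor $2$ inherited from the $-(2\delta)^{n/2}$ shape survives modulo $K(\zeta_n)^n$; this is precisely what produces the asymmetric shape $\sigma_i(\sqrt{2\gamma_0^{h^\prime}}) = 2\sqrt{2\gamma_0^{h^\prime}}^{-1}$ in \ref{ite:D4}. Elsewhere the argument is mostly bookkeeping on the parities of $n^\prime$ and $h^\prime$ and the sign $s^{n^\prime}$, together with the two reductions $-1 \in K(\zeta_n)^2$ (for $\nu_2(n) \geq 2$) and $2 \notin \mathbb{Q}(\zeta_n)^2$ but $2\zeta_n \in \mathbb{Q}(\zeta_n)^2$ (for $\nu_2(n) = 2$), which together explain why the four conditions \ref{ite:C2}--\ref{ite:C4} exhaust the ``degree $n^\prime$'' scenarios.
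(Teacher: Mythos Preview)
Your proposal is correct and follows essentially the same route as the paper: both compute the degree via Lemma~\ref{lem:kummer-theory} by pinning down the minimal $\ell$ with $\gamma^\ell \in K(\zeta_n)^n$, use Lemmas~\ref{lem:K-cap-Kzetan-to-n} and~\ref{lem:gamma0-repr} to show $\ell\in\{n',2n'\}$ and that $\ell=n'$ is equivalent to \ref{ite:C1}--\ref{ite:C4}, and then treat the existence of $\sigma$ by writing $K(\zeta_n,\gamma^{1/n})\cong K(\zeta_n)[X]/(X^\ell-\beta)$ and reducing to the condition $\sigma_i(\beta)=\beta^{-1}$ for $i\in\{1,2\}$. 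The paper spells out case~\ref{ite:C3} explicitly and leaves the rest to the reader, while you sketch all five cases uniformly and flag the extra care needed in~\ref{ite:C4}; otherwise the arguments coincide.
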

\begin{proof}
    By Lemma~\ref{lem:kummer-theory}, we have that $\big[K(\zeta_n, \gamma^{1/n}) : K(\zeta_n) \big]$ is equal to the least positive integer $\ell$ such that $\gamma^\ell \in K(\zeta_n)^n$.

    First, let us prove that $\ell \in \{n^\prime, 2n^\prime\}$.
    On the one hand, by Lemma~\ref{lem:K-cap-Kzetan-to-n}, $\gamma^\ell \in K(\zeta_n)^n$ implies that $\gamma^{2\ell} \in K^n$, which in turn, by Lemma~\ref{lem:gamma0-repr}, yields that $n \mid 2h \ell$, and so $n^\prime \mid \ell$.
    On the other hand, we have that $\gamma^{2n^\prime} = (\gamma_0^{h^\prime})^n \in K^n \subseteq K(\zeta_n)^n$, so that $\ell \leq 2n^\prime$.
    The claim is proved.

    At this point, by applying Lemma~\ref{lem:K-cap-Kzetan-to-n} and Lemma~\ref{lem:gamma0-repr} (and also employing the fact that $-1 \in K(\zeta_n)^2$ when $4 \mid n$), one can prove that $\gamma^{n^\prime} \in K(\zeta_n)^n$ is equivalent to ``\ref{ite:C1} or \ref{ite:C2} or \ref{ite:C3} or \ref{ite:C4}'', which gives \eqref{equ:degree-Kzetangamma1n}.

	It remains to prove the statement on the existence of $\sigma$.
	Thanks to Lemma~\ref{lem:kummer-theory}, we have that $K(\zeta_n, \gamma^{1/n}) \cong K(\zeta_n)[X] / (f)$ where $f \in K(\zeta_n)[X]$ is equal to $X^{n^\prime} - \eta s \gamma_0^{h^\prime\!/ 2}$, $X^{n^\prime} - \eta \sqrt{\smash[b]{\gamma_0^{h^\prime}}}$, $X^{n^\prime} - \eta \sqrt{\smash[b]{-\gamma_0^{h^\prime}}}$, $X^{n^\prime} - \eta 2^{-1}(1 + \mathbf{i})\sqrt{\smash[b]{2\gamma_0^{h^\prime}}}$, or $X^{2n^\prime} - \eta \gamma^{h^\prime}$, for some $n$th root of unity $\eta$, if it holds \ref{ite:C1}, \ref{ite:C2}, \ref{ite:C3}, \ref{ite:C4}, or none of them, respectively.
	We consider only the case in which \ref{ite:C3} holds, since the proofs of the other cases are very similar.
	Suppose that \ref{ite:C3} holds.
	We have to prove that $\sigma$ exists if and only if there exists $\sigma_0 \in \Gal(K(\zeta_n) / \mathbb{Q})$ such that $\sigma_0(\zeta_n) = \zeta_n^{-1}$ and $\sigma_0\big(\!\sqrt{\smash[b]{-\gamma_0^{h^\prime}}}\big) \sqrt{\smash[b]{-\gamma_0^{h^\prime}}} = 1$ (note that $\sigma_0 \in \{\sigma_1, \sigma_2\}$).

	Suppose that $\sigma$ exists.
	Note that $\sqrt{\smash[b]{-\gamma_0^{h^\prime}}} = \rho(\gamma^{1/n})^{n^\prime}$ for some $n$th root of unity $\rho$.
	Then, letting $\sigma_0 := \sigma|_{K(\zeta_n)}$, we have that $\sigma_0(\zeta_n) = \zeta_n^{-1}$ and
	\begin{equation*}
		\sigma_0\!\Big(\!\sqrt{\smash[b]{-\gamma_0^{h^\prime}}}\Big) = \sigma\big(\rho(\gamma^{1/n})^{n^\prime}\big) = \sigma(\rho)\, \sigma(\gamma^{1/n})^{n^\prime} = \rho^{-1} (\gamma^{1/n})^{-n^\prime} = \sqrt{\smash[b]{-\gamma_0^{h^\prime}}}^{-1} ,
	\end{equation*}
	as desired.

	Vice versa, suppose that $\sigma_0$ exists.
	Then $\sigma_0$ can be extended to an automorphism $\widetilde{\sigma} \in \Gal(K(\zeta_n, \gamma^{1/n})/\mathbb{Q})$ that sends the root $\gamma^{1/n}$ of $f$ to the root $\gamma^{-1/n}$ of
	\begin{equation*}
		\sigma_0 f = X^{n^\prime} - \sigma_0\!\Big(\!\eta \sqrt{\smash[b]{-\gamma_0^{h^\prime}}}\Big)
		= X^{n^\prime} - \eta^{-1} \sqrt{\smash[b]{-\gamma_0^{h^\prime}}}^{-1} ,
	\end{equation*}
	and we can take $\sigma := \widetilde{\sigma}$.
	The proof is complete.
\end{proof}

\begin{lemma}\label{lem:sigma-sqrt}
	Let $\gamma \in K \setminus (\mathbb{Q} \cup K^2)$ with $\Norm_K(\gamma)\Delta_K \notin \mathbb{Q}^2$, and let $n$ be a positive integer.
	Write $\gamma = a + b\sqrt{\Delta_K}$, with $a,b \in \mathbb{Q}$.
	Suppose that $\sqrt{\gamma} \in K(\zeta_n)$ and let $N := \Norm_K(\gamma)$, $c := (a - \sqrt{N}) / 2$, and $d := c / \!\Delta_K$ (Note that $N \in \mathbb{Q}^2$ by Lemma~\ref{lem:sqrt-in-Kzetan}).
	Suppose that $\sqrt{\Delta_K} \notin \mathbb{Q}(\zeta_n)$ and let $\sigma \in \Gal(K(\zeta_n) / \mathbb{Q})$ be the unique automorphism such that $\sigma(\zeta_n) = \zeta_n^{-1}$ and $\sigma(\!\sqrt{\Delta_K}) = -\overline{\sqrt{\Delta_K}}$.
	Then
	\begin{equation*}
		\sigma(\!\sqrt{\gamma}) \sqrt{\gamma} =
		\begin{cases}
			\sqrt{N} & \text{if $\Delta_K > 0$ and $\big(\big(\sqrt{c} \in \mathbb{Q}(\zeta_n), c < 0\big) \text{ or } \big(\sqrt{d} \in \mathbb{Q}(\zeta_n), d > 0\big)\big)$;} \\
			-\sqrt{N} & \text{if $\Delta_K > 0$ and $\big(\big(\sqrt{c} \in \mathbb{Q}(\zeta_n), c > 0\big) \text{ or } \big(\sqrt{d} \in \mathbb{Q}(\zeta_n), d < 0\big)\big)$;} \\
			\gamma & \text{if $\Delta_K < 0$ and $\big(\big(\sqrt{c} \in \mathbb{Q}(\zeta_n), c > 0\big) \text{ or } \big(\sqrt{d} \in \mathbb{Q}(\zeta_n), d > 0\big)\big)$;} \\
			-\gamma & \text{if $\Delta_K < 0$ and $\big(\big(\sqrt{c} \in \mathbb{Q}(\zeta_n), c < 0\big) \text{ or } \big(\sqrt{d} \in \mathbb{Q}(\zeta_n), d < 0\big)\big)$.}
		\end{cases}
	\end{equation*}
\end{lemma}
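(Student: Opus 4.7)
My plan is to reduce the lemma to a sign determination, then handle the signs case by case using the decomposition of $\sqrt{\gamma}$ already obtained in the proof of Lemma~\ref{lem:sqrt-in-Kzetan}. First, since $\sigma(\zeta_n) = \zeta_n^{-1}$ and $\mathbb{Q}(\zeta_n)$ contains a unique such automorphism, the restriction $\sigma|_{\mathbb{Q}(\zeta_n)}$ is complex conjugation, so $\sigma(\alpha) = \overline{\alpha}$ for every $\alpha \in \mathbb{Q}(\zeta_n)$. The defining property $\sigma(\!\sqrt{\Delta_K}) = -\overline{\sqrt{\Delta_K}}$ then reduces to $\sigma(\!\sqrt{\Delta_K}) = -\sqrt{\Delta_K}$ when $\Delta_K > 0$ and to $\sigma(\!\sqrt{\Delta_K}) = \sqrt{\Delta_K}$ when $\Delta_K < 0$, so $\sigma(\gamma)\gamma$ equals $\Norm_K(\gamma) = N$ in the first case and $\gamma^2$ in the second. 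Squaring $\sigma(\!\sqrt{\gamma})\sqrt{\gamma}$ immediately shows that its value is $\pm\sqrt{N}$ if $\Delta_K > 0$ and $\pm\gamma$ if $\Delta_K < 0$, so the whole content of the lemma is the determination of the sign in each of the eight subcases.

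I would then recycle the decomposition from the proof of Lemma~\ref{lem:sqrt-in-Kzetan}. With $e := (a + \sqrt{N})/2$, one has $c + e = a$, $c - e = -\sqrt{N}$, $ce = b^2\Delta_K/4$, and $\sqrt{\gamma} = s\sqrt{c} + t\sqrt{e}$ for suitable signs $s, t \in \{-1, +1\}$. Squaring this last equality and comparing with $\gamma = a + b\sqrt{\Delta_K}$ yields the key identity $st\sqrt{c}\sqrt{e} = (b/2)\sqrt{\Delta_K}$. Moreover, from $d = c/\Delta_K$ one obtains $\sqrt{c} = \pm\sqrt{d}\sqrt{\Delta_K}$, and combined with the hypothesis $\sqrt{\Delta_K} \notin \mathbb{Q}(\zeta_n)$ this shows that the conditions $\sqrt{c} \in \mathbb{Q}(\zeta_n)$ and $\sqrt{d} \in \mathbb{Q}(\zeta_n)$ are mutually exclusive, so the four case-families in the statement are genuinely disjoint.

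The computational heart is then short. Writing $\sigma(\!\sqrt{c}) = \epsilon_c \sqrt{c}$ and $\sigma(\!\sqrt{e}) = \epsilon_e \sqrt{e}$ with $\epsilon_c, \epsilon_e \in \{-1, +1\}$, direct expansion of $\sigma(\!\sqrt{\gamma})\sqrt{\gamma} = (s\sqrt{c} + t\sqrt{e})(s\sigma(\!\sqrt{c}) + t\sigma(\!\sqrt{e}))$ together with the key identity gives
\begin{equation*}
    \sigma(\!\sqrt{\gamma})\,\sqrt{\gamma} = \epsilon_c\, c + \epsilon_e\, e + (\epsilon_c + \epsilon_e)\,(b/2)\sqrt{\Delta_K} ,
\end{equation*}
which evaluates to $\gamma$, $-\gamma$, $-\sqrt{N}$, or $\sqrt{N}$ according as $(\epsilon_c, \epsilon_e)$ equals $(1,1)$, $(-1,-1)$, $(1,-1)$, or $(-1,1)$. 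It then remains to compute $(\epsilon_c, \epsilon_e)$ in each subcase: when $\sqrt{c} \in \mathbb{Q}(\zeta_n)$, the value of $\epsilon_c$ is read off from $\overline{\sqrt{c}}$ (namely $+1$ if $c > 0$ and $-1$ if $c < 0$), and solving $\sqrt{e} = st(b/2)\sqrt{\Delta_K}/\sqrt{c}$ for $\sigma(\!\sqrt{e})$ yields $\epsilon_e = \epsilon_c \cdot \sigma(\!\sqrt{\Delta_K})/\sqrt{\Delta_K}$; the case $\sqrt{d}\in\mathbb{Q}(\zeta_n)$ is handled analogously via $\sqrt{c} = \pm\sqrt{d}\sqrt{\Delta_K}$ and $\sqrt{e} = \pm st(b/2)/\sqrt{d}$, and gives $\epsilon_c = \epsilon_d \cdot \sigma(\!\sqrt{\Delta_K})/\sqrt{\Delta_K}$ and $\epsilon_e = \epsilon_d$.

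The only real obstacle is the bookkeeping itself: one has to fix compatible choices for the square roots $\sqrt{c}, \sqrt{d}, \sqrt{e}, \sqrt{\Delta_K}, \sqrt{N}$ and carry the sign $\sigma(\!\sqrt{\Delta_K})/\sqrt{\Delta_K} = \mp 1$ correctly through the eight subcases, but no ingredient beyond Lemma~\ref{lem:sqrt-in-Kzetan} and the defining properties of $\sigma$ is needed; matching the sixteen resulting sign choices against the four pairs $(\epsilon_c,\epsilon_e)$ recovers exactly the eight entries of the statement.
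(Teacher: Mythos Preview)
Your proposal is correct and follows essentially the same route as the paper: both arguments invoke the decomposition $\sqrt{\gamma}=s\sqrt{c}+t\sqrt{e}$ from the proof of Lemma~\ref{lem:sqrt-in-Kzetan}, expand $\sigma(\sqrt{\gamma})\sqrt{\gamma}$ directly, and then read off the result by casework on the signs of $c$ (or $d$) and $\Delta_K$. Your packaging via the pair $(\epsilon_c,\epsilon_e)$ and the preliminary remark that $(\sigma(\sqrt{\gamma})\sqrt{\gamma})^2\in\{N,\gamma^2\}$ make the bookkeeping a bit cleaner than the paper's bare computation with $|c|$, $|\Delta_K|$, and $\overline{\sqrt{c}}/\sqrt{c}$, but the substance is identical.
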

\begin{proof}
	Since $\sqrt{\gamma} \in K(\zeta_n)$, by Lemma~\ref{lem:sqrt-in-Kzetan} we get that $\sqrt{c} \in \mathbb{Q}(\zeta_n)$ or $\sqrt{d} \in \mathbb{Q}(\zeta_n)$.
	Suppose that $\sqrt{c} \in \mathbb{Q}(\zeta_n)$.
	It can be easily checked that $\sqrt{\gamma} = s\big(\sqrt{c} + b \sqrt{\Delta_K} / (2\sqrt{c})\big)$, where $s \in \{-1, +1\}$.
	Hence, we get that
	\begin{equation*}
		\sigma(\!\sqrt{\gamma}) \gamma
		= \left(\overline{\sqrt{c}} - \frac{b\,\overline{\sqrt{\Delta_K}}}{2 \overline{\sqrt{c}}} \right) \left(\sqrt{c} - \frac{b\sqrt{\Delta_K}}{2 \sqrt{c}} \right)
		= |c| - \frac{b^2|\Delta_K|}{4|c|} + \frac{b}{2} \left(\frac{\overline{\sqrt{c}}}{\sqrt{c}} \sqrt{\Delta_K} - \frac{\sqrt{c}}{\overline{\sqrt{c}}} \overline{\sqrt{\Delta_K}}\right) .
	\end{equation*}
	The claim follows by considering the possible signs of $c$ and $\Delta_K$.
	For instance, if $c > 0$ and $\Delta_K < 0$, then
	\begin{equation*}
		\sigma(\!\sqrt{\gamma}) \gamma
		= c + \frac{b^2\Delta_K}{4c} + \frac{b}{2} \left(\sqrt{\Delta_K} - \overline{\sqrt{\Delta_K}}\right) = a + b\sqrt{\Delta_K} = \gamma .
	\end{equation*}
	The case $\sqrt{d} \in \mathbb{Q}(\zeta_n)$ is handled similarly, using the identity $\sqrt{\gamma} = s\big(b / (2\sqrt{d}) + \sqrt{d}\sqrt{\Delta_K}\big)$, where $s \in \{-1, +1\}$.
\end{proof}

\subsection{Wagstaff sums}

Let $h$, $m$, $t$ be positive integers.
Define the \emph{Wagstaff sum}~\cite{MR674829}
\begin{equation*}
    S_{h,m}(t) := \sum_{\substack{n \,=\, 1 \\[1pt] m \,\mid\, nt}}^\infty \frac{\mu(n) (nt, h)}{\varphi(nt) nt} .
\end{equation*}
(We adopt a notation different from~\cite{MR674829} since we are mostly interested in considering the Wagstaff sum as a function of $t$, for fixed $h$ and $m$.)
In this section, we provide an explicit formula for $S_{h,m}(t)$ as a product involving the Artin constant, a multiplicative function, and a periodic function, with both these functions having rational values.
Formulas for $S_{h,m}(t)$ as a rational multiple of the Artin constant were already proved by Wagstaff~\cite{MR674829} (see also~\cite[Section~3]{MR3086966}), and our result can be deduced from them.
However, we believe it is easier to give an independent proof.
Let
\begin{equation*}
    B_h := 2 \prod_{\substack{p \,>\, 2 \\ p \,\mid\, h}} \left(1 - \frac{p-1}{p^2 - p - 1}\right) \quad\text{ and }\quad f_h(t, n) := \frac{(nt, h)\varphi(t)}{(t, h)\varphi(nt)n} ,
\end{equation*}
for every positive integer $n$.
Note that $f_h(t, \cdot)$ is a multiplicative function and
\begin{equation*}
    f_h(t, p) =
    \begin{cases}
        p^{-1} & \text{ if $p \mid t$ and $\nu_p(t) < \nu_p(h)$;} \\
        p^{-2} & \text{ if $p \mid t$ and $\nu_p(t) \geq \nu_p(h)$;} \\
        (p - 1)^{-1} & \text{ if $p \nmid t$ and $p \mid h$;} \\
        (p(p - 1))^{-1} & \text{ if $p \nmid t$ and $p \nmid h$;} \\
    \end{cases}
\end{equation*}
for each prime number $p$.
Furthermore, put
\begin{equation*}
    F_h(t) := \frac{(t, h)}{\varphi(t)t} \widetilde{F}_h(t)
    \quad\text{ and }\quad \widetilde{F}_h(t) := \prod_{\substack{p \,>\, 2 \\ p \,\mid\, t}} \left(1 - \frac{(pt, h)}{p^2(t, h)}\right) \left(1 - \frac{(p, h)}{p(p-1)}\right)^{-1} ,
\end{equation*}
while $G_{h,m}(t) := \widetilde{G}_{h, m / (m, t)}(t)$ and
\begin{equation*}
    \widetilde{G}_{h, m}(t) := |\mu(m)|\prod_{\substack{p \,>\, 2 \\ p \,\mid\, m}} \left(1 - \frac1{f_{h}(t, p)}\right)^{-1}
    \cdot
    \begin{cases}
        1 - f_h(t, 2) & \text{ if $2 \nmid m$;} \\
        {-}f_h(t, 2) & \text{ if $2 \mid m$.} \\
    \end{cases}
\end{equation*}
Note that $F_h(\cdot)$ is a multiplicative function, and that $G_{h,m}(\cdot)$ is a periodic function.

\begin{lemma}\label{lem:wagstaff-sum}
    We have that
    \begin{equation*}
        S_{h,m}(t) = A \, B_h\, F_h(t) \, G_{h,m}(t) ,
    \end{equation*}
    for all positive integers $h$, $m$, $t$.
\end{lemma}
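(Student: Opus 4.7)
The plan is to expand $S_{h,m}(t)$ as an absolutely convergent Euler product and then identify the resulting local factors with those of $A\,B_h\,F_h(t)\,G_{h,m}(t)$. First, set $m' := m/(m,t)$, so the divisibility condition $m \mid nt$ becomes $m' \mid n$. Since $\mu(n)$ is supported on squarefree integers, I restrict to squarefree $n$; if $m'$ is not squarefree, no such $n$ exists and the sum vanishes, which matches the factor $|\mu(m')|$ inside $\widetilde G_{h,m'}$. Assuming $m'$ squarefree, write $n = \prod_p p^{a_p}$ with $a_p \in \{0,1\}$; then $\mu(n)$ and $(nt,h)/(\varphi(nt)\,nt)$ both split multiplicatively over $p$, and $m' \mid n$ is equivalent to the local constraint $a_p = 1$ at each $p \mid m'$. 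Absolute convergence, which follows from Lemma~\ref{lem:1overphinn-series} after bounding $(nt,h)/(\varphi(nt)\,nt) \ll h/(\varphi(n)n\,\varphi(t)t)$, lets me interchange sum and product, obtaining
\begin{equation*}
S_{h,m}(t) \;=\; \prod_p \Sigma_p, \qquad \Sigma_p = \begin{cases} -L_p(1) & \text{if } p \mid m', \\ L_p(0) - L_p(1) & \text{otherwise}, \end{cases}
\end{equation*}
where $L_p(a)$ denotes the $p$-local value of $(nt,h)/(\varphi(nt)\,nt)$ when $\nu_p(n) = a$, so that $\nu_p(nt) = a + \nu_p(t)$.

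Next, I would compute $L_p(a)$ explicitly in the four cases determined by $p\mid t$ vs.\ $p\nmid t$ and $p\mid h$ vs.\ $p\nmid h$. A direct manipulation gives $L_p(0) - L_p(1) = L_p(0)\bigl(1 - (pt,h)_p/(p^2(t,h)_p)\bigr)$ when $p\mid t$, and $L_p(0) - L_p(1) = 1 - (p,h)/(p(p-1))$ when $p\nmid t$, where $(\cdot)_p$ denotes the $p$-part. In particular $L_p(0)$ is the $p$-local part of $(t,h)/(\varphi(t)t)$, which is exactly the prefactor in $F_h(t)$.

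The identification with the right-hand side then proceeds prime by prime. At odd $p$ with $p\mid t$ and $p\nmid m'$ the key observation is the identity
\begin{equation*}
\left(1 - \tfrac{1}{p(p-1)}\right) \cdot B_{h,p} \cdot \left(1 - \tfrac{(p,h)}{p(p-1)}\right)^{-1} = 1,
\end{equation*}
verified by splitting on whether $p \mid h$; this shows that the Artin factor $A_p$ and the $B_{h,p}$-factor cancel against the inverse factor appearing in $\widetilde F_h(t)$, leaving exactly $L_p(0) - L_p(1) = \Sigma_p$. At odd $p\mid m'$ one checks that the quotient $-L_p(1)/(L_p(0) - L_p(1))$ coincides with $(1 - 1/f_h(t,p))^{-1}$, which is by construction the local factor of $\widetilde G_{h,m'}(t)$. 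At odd $p \nmid t,\, p \nmid m'$ only $A_p$ and $B_{h,p}$ contribute on the right, and a two-line computation matches $\Sigma_p = 1 - (p,h)/(p(p-1))$. The prime $p=2$ is special: since $F_h(t)$ has no nontrivial $2$-factor (apart from the part already inside $(t,h)/(\varphi(t)t)$ when $2 \mid t$), the global factor $2$ in $B_h$ compensates the Artin factor $\tfrac12$ at $p=2$, and the explicit $p=2$ branch of $\widetilde G_{h,m'}(t)$ is designed to match $\Sigma_2$ in each of the subcases ($2\mid t$ or not, $2\mid m'$ or not).

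Multiplying the matched local factors produces the claimed identity. The main obstacle is the case analysis: one must treat separately the four classes of primes (those dividing $t$, dividing $m'$, dividing $h$, and the prime $2$), and verify the local matchings in each; the computations are short and explicit, but the bookkeeping is delicate—in particular, the precise roles of the inverse factor $(1 - (p,h)/(p(p-1)))^{-1}$ in $\widetilde F_h$ and the factor of $2$ in $B_h$ are what make all the Artin-like factors disappear at primes of $t$ and at $p=2$, respectively.
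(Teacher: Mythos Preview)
Your approach is correct and essentially the same as the paper's: both reduce the sum to an Euler product (after noting the summand is multiplicative in $n$) and then identify the local factors. The only organizational difference is that the paper first factors out $(t,h)/(\varphi(t)t)$ to work with the auxiliary multiplicative function $f_h(t,n) := (nt,h)\varphi(t)/\big((t,h)\varphi(nt)n\big)$, evaluates the unconstrained odd-$n$ Euler product $T_h(t)=\prod_{p>2}(1-f_h(t,p))=A\,B_h\,\widetilde F_h(t)$ in one shot, and then treats the divisibility condition $m'\mid n$ by a short recursion on the prime factors of $m'$ (and $p=2$ separately), whereas you incorporate the constraint $p\mid m'$ directly into each local factor $\Sigma_p$ and do the matching prime by prime. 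Both routes amount to the same computation; the paper's packaging via $f_h$ and the recursion $\widetilde S_{h,m}(t)=\widetilde S_{h,m/p}(t)\,(1-1/f_h(t,p))^{-1}$ slightly shortens the bookkeeping you describe at primes dividing $m'$.
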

\begin{proof}
    For each prime number $p > 2$, put
    \begin{equation*}
        g_h(p) :=
        \begin{cases}
            \big(1 - (p-1)^{-1}\big)^{-1} & \text{ if $p \mid h$;} \\
            \big(1 - (p(p-1))^{-1}\big)^{-1} & \text{ if $p \nmid h$.} \\
        \end{cases}
    \end{equation*}
    Since $f_h(t, \cdot)$ is multiplicative, we have that
    \begin{align*}
        T_h(t) &:= \sum_{\substack{n \,=\, 1 \\ 2 \,\nmid\, n}}^\infty \mu(n) f_h(t, n)
        = \prod_{p \,>\, 2} \big(1 - f_h(t, p)\big) \\
        &= \prod_{\substack{p \,>\, 2 \\[1pt] p \,\nmid\, t,\; p \,\nmid\, h}} \left(1 - \frac1{p(p-1)}\right) \prod_{\substack{p \,>\, 2 \\[1pt] p \,\nmid\, t,\; p \,\mid\, h}} \left(1 - \frac1{p-1}\right) \prod_{\substack{p \,>\, 2 \\ p \,\mid\, t}} \big(1 - f_h(t, p)\big) \\
        &= \prod_{\substack{p \,>\, 2 \\[1pt] p \,\nmid\, h}} \left(1 - \frac1{p(p-1)}\right) \prod_{\substack{p \,>\, 2 \\[1pt] p \,\mid\, h}} \left(1 - \frac1{p-1}\right) \prod_{\substack{p \,>\, 2 \\ p \,\mid\, t}} \big(1 - f_h(t, p)\big) g_h(p) \\
        &= \prod_{\substack{p \,>\, 2}} \left(1 - \frac1{p(p-1)}\right) \prod_{\substack{p \,>\, 2 \\[1pt] p \,\mid\, h}} \left(1 - \frac1{p-1}\right) \left(1 - \frac1{p(p-1)}\right)^{-1}  \widetilde{F}_h(t) \\
        &= A\, B_h \, \widetilde{F}_h(t) .
    \end{align*}
    Consequently, letting
    \begin{equation*}
        \widetilde{S}_{h, m}(t) := \sum_{\substack{n \,=\, 1 \\ m \,\mid\, n}}^\infty \mu(n) f_h(t, n) ,
    \end{equation*}
    we get that
    \begin{equation}\label{equ:wagstaff1}
        \widetilde{S}_{h,1}(t) = T_h(t) \big(1 - f_h(t, 2)\big) = A\, B_h \, \widetilde{F}_h(t) \big(1 - f_h(t, 2)\big)
    \end{equation}
    and
    \begin{equation}\label{equ:wagstaff2}
        \widetilde{S}_{h,2}(t) = \widetilde{S}_{h,1}(t) - T_h(t) = A\, B_h \, \widetilde{F}_h(t) \cdot \big({-f_h(t, 2)}\big) .
    \end{equation}
    If $m$ is not squarefree, then it is clear that $\widetilde{S}_{h,m}(t) = 0$.
    Suppose that $m$ is squarefree and that $p$ is an odd prime factor of $m$.
    Then
    \begin{align*}
        \widetilde{S}_{h,m}(t) &= \sum_{\substack{n^\prime \,=\, 1 \\[1pt] m/p \,\mid\, n^\prime, \; p \,\nmid\, n^\prime}}^\infty \mu(pn^\prime) f_h(t, pn^\prime)
        = -f_h(t, p) \sum_{\substack{n^\prime \,=\, 1 \\[1pt] m/p \,\mid\, n^\prime, \; p \,\nmid\, n^\prime}}^\infty \mu(n^\prime) f_h(t, n^\prime) \\
        &= -f_h(t, p) \sum_{\substack{n^\prime \,=\, 1 \\[1pt] m/p \,\mid\, n^\prime, \; m \,\nmid\, n^\prime}}^\infty \mu(n^\prime) f_h(t, n^\prime)
        = -f_h(t, p) \big(\widetilde{S}_{h,m / p}(t) - \widetilde{S}_{h,m}(t) \big) ,
    \end{align*}
    from which it follows that
    \begin{equation}\label{equ:wagstaff3}
        \widetilde{S}_{h,m}(t) = \widetilde{S}_{h,m / p}(t) \left(1 - \frac1{f_h(t, p)}\right)^{-1}  .
    \end{equation}
    Therefore, from~\eqref{equ:wagstaff1}, \eqref{equ:wagstaff2}, and~\eqref{equ:wagstaff3}, we get that
    \begin{equation*}
        \widetilde{S}_{h,m}(t) = A\, B_h \, \widetilde{F}_h(t) \, \widetilde{G}_{h,m}(t) .
    \end{equation*}
    In conclusion, by noticing that
    \begin{equation*}
        S_{h,m}(t) = \frac{(t, h)}{\varphi(t)t} \widetilde{S}_{h,m / (m, t)}(t) ,
    \end{equation*}
    the claim follows.
\end{proof}

\subsection{Proof of Theorem~\ref{thm:constant}}

We employ the same notation of Section~\ref{subsec:proof-thm-asymptotic}.
Furthermore, we assume that $\Delta_K \notin \{-3, -4\}$, in order to apply to $K$ the results of Section~\ref{subsec:a-kummer-extension}.
By Lemma~\ref{lem:gamma0-repr}, there exists $s \in \{-1, +1\}$, a positive integer $h$, and $\gamma_0 \in K$ which is not a power in $K$, such that $\gamma = s \gamma_0^h$.
For every positive integer $n$, we have that
\begin{equation*}
	[K_n : \mathbb{Q}] = [K_n : K(\zeta_n)] [K(\zeta_n) : \mathbb{Q}] = [K_n : K(\zeta_n)] \,\varphi(n)
		\begin{cases}
			2 & \text{ if } \sqrt{\Delta_K} \notin \mathbb{Q}(\zeta_n) ; \\
			1 & \text{otherwise} .
		\end{cases}
\end{equation*}
Furthermore, in light of Lemma~\ref{lem:Qzetan}\ref{lem:Qzetan:ite1} and Lemma~\ref{lem:sigma-sqrt}, the conditions \ref{ite:C1}--\ref{ite:C4} and \ref{ite:D1}--\ref{ite:D5} of Lemma~\ref{lem:degree-and-existence} depend only on the divisibility of $n$ by some integers determined only by $s$, $h$, and $\gamma_0$.
Consequently, there exist $c_1, \dots, c_k \in \mathbb{Q}$ and $m_1, \dots, m_k \in \mathbb{Z}^+$, depending only on $s$, $h$, and $\gamma_0$, such that
\begin{equation}\label{equ:lin-comb}
	\frac{\# C_n}{[K_n : \mathbb{Q}]} = \frac{(n, 2h)}{\varphi(n) n} \sum_{i \,=\, 1}^k c_i \chi_{m_i}(n) ,
\end{equation}
for every positive integer $n$, where $\chi_m(\cdot)$ is the characteristic function of $m\mathbb{Z}$.
For every positive integer $t$, let
\begin{equation}\label{equ:def-Gu}
	G_{\bm{u}}(t) := B_{2h} \sum_{i \,=\, 1}^k c_i G_{2h,m_i}(t) .
\end{equation}
Note that $G_{\bm{u}}(\cdot)$ is a periodic function.
Then, by~\eqref{equ:def-deltaut}, \eqref{equ:lin-comb}, and Lemma~\ref{lem:wagstaff-sum}, we get that
\begin{align*}
    \delta_{\bm{u}}(t) &:= \sum_{n \,=\, 1}^\infty \frac{\mu(n)\,\#C_{nt}}{[K_{nt} : \mathbb{Q}]}
    = \sum_{n \,=\, 1}^\infty \frac{\mu(n)(nt, 2h)}{\varphi(nt) nt} \sum_{i \,=\, 1}^k c_i \chi_{m_i}(nt)
    = \sum_{i \,=\, 1}^k c_i \sum_{\substack{n \,=\, 1 \\ m_i \,\mid\, nt}}^\infty \frac{\mu(n)(nt, 2h)}{\varphi(nt) nt} \\
    &= \sum_{i \,=\, 1}^k c_i S_{2h, m_i} (t)
    = A B_{2h}\, F_{2h}(t) \sum_{i \,=\, 1}^k c_i G_{2h,m_i}(t) = A \, F_{2h}(t) \, G_{\bm{u}}(t) ,
\end{align*}
for every positive integer $t$.
The proof is complete.

\section{Examples}\label{sec:examples}

We employ the same notation of Section~\ref{subsec:proof-thm-asymptotic}.
We provide only the main details and leave the rest of the computations to the reader.

\subsection{Fibonacci numbers (Example~\ref{exe:11})}

Let $a_1 = a_2 = 1$, so that $\bm{u}$ is the sequence of Fibonacci numbers.
Then $K = \mathbb{Q}(\!\sqrt{5})$, $s = -1$, $h = 2$, and $\gamma_0 = \tfrac1{2} + \tfrac1{2}\sqrt{5}$.
Since $\Norm_K(\gamma_0) = -1$, it follows from Lemma~\ref{lem:sqrt-in-Kzetan} that neither of $\sqrt{\gamma_0}$, $\sqrt{-\gamma_0}$, $\sqrt{2\gamma_0}$ belongs to $K(\zeta_n)$, for every positive integer $n$.
In turn, with the aid of Lemma~\ref{lem:Qsqrt-in-Kzetan} and Lemma~\ref{lem:Qzetan}\ref{lem:Qzetan:ite1}, this implies that \ref{ite:C2}, \ref{ite:C3}, and \ref{ite:C4} cannot occur.
Furthermore, with the notation of Lemma~\ref{lem:degree-and-existence}, we have that $\sigma_1(\gamma_0) \gamma_0 = |\gamma_0|^2$ is not a root of unity and, if $\sqrt{5} \notin \mathbb{Q}(\zeta_n)$, then $\sigma_2(\gamma_0) = -\gamma_0^{-1}$.
Therefore, Lemma~\ref{lem:degree-and-existence} yields that
\begin{equation*}
	[K_n : \mathbb{Q}] = \frac{\varphi(n) n} {(n, 4)} \cdot \begin{cases} 2 & \text{ if } 2 \mid n \\ 1 & \text{ if } 2 \nmid n \end{cases} \cdot \begin{cases} 1 & \text{ if } 5 \mid n \\ 2 & \text{ if } 5 \nmid n \end{cases}
\end{equation*}
and
\begin{equation*}
	\# C_n =
		\begin{cases}
			2 & \text{ if $4 \nmid n$ and $5 \nmid n$}; \\
			1 & \text{ otherwise}.
		\end{cases}
\end{equation*}
Consequently, we have that
\begin{equation*}
	\frac{\# C_n}{[K_n : \mathbb{Q}]} = \frac{(n, 4)}{\varphi(n)n} \left(\chi_1(n) - \tfrac1{2}\chi_2(n) - \tfrac1{4}\chi_4(n) + \tfrac1{4}\chi_{20}(n)\right)
\end{equation*}
At this point, the claim follows from \eqref{equ:lin-comb} and \eqref{equ:def-Gu}.

\subsection{Example~\ref{exe:4-1}}

Let $a_1 = 4$ and $a_2 = -1$.
Then $K = \mathbb{Q}(\!\sqrt{3})$, $s = 1$, $h = 2$, and $\gamma_0 = 2 + \sqrt{3}$.
Since $\Norm_K(\gamma_0) = 1$, by Lemma~\ref{lem:sqrt-in-Kzetan}, we have that $\sqrt{\gamma_0} \in K(\zeta_n)$ if and only if $8 \mid n$ or $24 \mid n$, for every positive integer $n$.
We some patience, one can work out that the conditions of Lemma~\ref{lem:degree-and-existence} are equivalent to the following:
\begin{enumerate}[label=(C\arabic*$^\prime$)]
    \item\label{ite:C1prime} $2 \nmid n$;
    \item\label{ite:C2prime} $\nu_2(n) = 1$ or $8 \mid n$ or $24 \mid n$;
    \item $\bot$;
    \item $\bot$;
    \item[(D1$^\prime$)] $2 \nmid n$;
    \item[(D2$^\prime$)] it holds \ref{ite:C2prime}, and $4 \nmid n$ or $24 \nmid n$;
    \item[(D3$^\prime$)] $\bot$;
    \item[(D4$^\prime$)] $\bot$;
    \item[(D5$^\prime$)] neither \ref{ite:C1prime} nor \ref{ite:C2prime};
\end{enumerate}
where $\bot$ denotes a condition that is never satisfied.
Consequently, one gets that
\begin{equation*}
    \frac{\# C_n}{[K_n : \mathbb{Q}]} = \frac{(n, 4)}{\varphi(n)n} \left(\chi_1(n) - \tfrac1{2}\chi_4(n) + \tfrac1{2}\chi_{24}(n)\right) .
\end{equation*}
Then the claim follows from \eqref{equ:lin-comb} and \eqref{equ:def-Gu}.

\subsection{Example~\ref{exe:102}}

Let $a_1 = 10$ and $a_2 = 2$.
Then $K = \mathbb{Q}(\!\sqrt{3})$, $s = -1$, $h = 3$, and $\gamma_0 = 2 + \sqrt{3}$.
With some effort, one finds that the conditions of Lemma~\ref{lem:degree-and-existence} are equivalent to the following:
\begin{enumerate}[label=(C\arabic*$^{\prime\prime}$)]
    \item\label{ite:C1primeprime} $2 \nmid n$;
    \item\label{ite:C2primeprime} $8 \mid n$ or $24 \mid n$;
    \item $\bot$;
    \item $\bot$;
    \item[(D1$^{\prime\prime}$)] $2 \nmid n$;
    \item[(D2$^{\prime\prime}$)] $\bot$;
    \item[(D3$^{\prime\prime}$)] $\bot$;
    \item[(D4$^{\prime\prime}$)] $\bot$;
    \item[(D5$^{\prime\prime}$)] neither \ref{ite:C1primeprime} nor \ref{ite:C2primeprime}.
\end{enumerate}
Consequently, one gets that
\begin{equation*}
    \frac{\# C_n}{[K_n : \mathbb{Q}]} = \frac{(n, 6)}{\varphi(n)n} \left(\chi_1(n) - \tfrac1{2}\chi_2(n) + \tfrac1{2}\chi_{24}(n)\right) .
\end{equation*}
Then the claim follows from \eqref{equ:lin-comb} and \eqref{equ:def-Gu}.


\section{Tables}
\FloatBarrier

\begin{center}
    \begin{table}[h]
        \begin{tabular}{cccc|cccc}
            \toprule
            $t$ & $\delta_{\bm{u}}(t)$ & $\widetilde{\delta}_{\bm{u}}(t)$ & error & $t$ & $\delta_{\bm{u}}(t)$ & $\widetilde{\delta}_{\bm{u}}(t)$ & error \\
            \midrule
            1 & 0.373956 & 0.374149 & 0.052\% & 21 & 0.001588 & 0.001621 & 2.053\% \\
            2 & 0.285387 & 0.285535 & 0.052\% & 22 & 0.002597 & 0.002563 & 1.294\% \\
            3 & 0.066481 & 0.066427 & 0.081\% & 23 & 0.000739 & 0.000742 & 0.391\% \\
            4 & 0.066426 & 0.066530 & 0.156\% & 24 & 0.002952 & 0.002955 & 0.092\% \\
            5 & 0.018895 & 0.018834 & 0.321\% & 25 & 0.000756 & 0.000752 & 0.501\% \\
            6 & 0.050736 & 0.050770 & 0.068\% & 26 & 0.001830 & 0.001795 & 1.929\% \\
            7 & 0.008935 & 0.008883 & 0.579\% & 27 & 0.000821 & 0.000879 & 7.097\% \\
            8 & 0.016607 & 0.016649 & 0.255\% & 28 & 0.001587 & 0.001497 & 5.676\% \\
            9 & 0.007387 & 0.007456 & 0.937\% & 29 & 0.000461 & 0.000461 & 0.096\% \\
            10 & 0.009447 & 0.009511 & 0.674\% & 30 & 0.001680 & 0.001678 & 0.091\% \\
            11 & 0.003402 & 0.003362 & 1.188\% & 31 & 0.000402 & 0.000412 & 2.458\% \\
            12 & 0.011809 & 0.011720 & 0.755\% & 32 & 0.001038 & 0.001012 & 2.497\% \\
            13 & 0.002398 & 0.002453 & 2.279\% & 33 & 0.000605 & 0.000597 & 1.302\% \\
            14 & 0.006819 & 0.006839 & 0.299\% & 34 & 0.001049 & 0.001028 & 2.044\% \\
            15 & 0.003359 & 0.003316 & 1.281\% & 35 & 0.000451 & 0.000486 & 7.656\% \\
            16 & 0.004152 & 0.004080 & 1.726\% & 36 & 0.001312 & 0.001310 & 0.162\% \\
            17 & 0.001375 & 0.001390 & 1.081\% & 37 & 0.000281 & 0.000260 & 7.392\% \\
            18 & 0.005637 & 0.005641 & 0.066\% & 38 & 0.000835 & 0.000876 & 4.961\% \\
            19 & 0.001094 & 0.001096 & 0.219\% & 39 & 0.000426 & 0.000458 & 7.418\% \\
            20 & 0.007085 & 0.007095 & 0.134\% & 40 & 0.001771 & 0.001766 & 0.303\% \\
            \bottomrule \\
        \end{tabular}
        \caption{Comparison of $\delta_{\bm{u}}(t)$ and $\widetilde{\delta}_{\bm{u}}(t)$ for $a_1 = 1$ and $a_2 = 1$.}\label{tab:1-1}
    \end{table}
\end{center}

\begin{center}
    \begin{table}
        \begin{tabular}{cccc|cccc}
            \toprule
            $t$ & $\delta_{\bm{u}}(t)$ & $\widetilde{\delta}_{\bm{u}}(t)$ & error & $t$ & $\delta_{\bm{u}}(t)$ & $\widetilde{\delta}_{\bm{u}}(t)$ & error \\
            \midrule
            1 & 0.000000 & 0.000000 & 0.000\% & 21 & 0.000000 & 0.000000 & 0.000\% \\
            2 & 0.560934 & 0.561025 & 0.016\% & 22 & 0.005104 & 0.005157 & 1.045\% \\
            3 & 0.000000 & 0.000000 & 0.000\% & 23 & 0.000000 & 0.000000 & 0.000\% \\
            4 & 0.149582 & 0.149481 & 0.068\% & 24 & 0.012465 & 0.012304 & 1.293\% \\
            5 & 0.000000 & 0.000000 & 0.000\% & 25 & 0.000000 & 0.000000 & 0.000\% \\
            6 & 0.099722 & 0.099698 & 0.024\% & 26 & 0.003598 & 0.003597 & 0.014\% \\
            7 & 0.000000 & 0.000000 & 0.000\% & 27 & 0.000000 & 0.000000 & 0.000\% \\
            8 & 0.028047 & 0.028217 & 0.607\% & 28 & 0.003574 & 0.003516 & 1.620\% \\
            9 & 0.000000 & 0.000000 & 0.000\% & 29 & 0.000000 & 0.000000 & 0.000\% \\
            10 & 0.028342 & 0.028577 & 0.829\% & 30 & 0.005039 & 0.005052 & 0.267\% \\
            11 & 0.000000 & 0.000000 & 0.000\% & 31 & 0.000000 & 0.000000 & 0.000\% \\
            12 & 0.016620 & 0.016633 & 0.077\% & 32 & 0.001753 & 0.001770 & 0.974\% \\
            13 & 0.000000 & 0.000000 & 0.000\% & 33 & 0.000000 & 0.000000 & 0.000\% \\
            14 & 0.013402 & 0.013374 & 0.210\% & 34 & 0.002063 & 0.002128 & 3.166\% \\
            15 & 0.000000 & 0.000000 & 0.000\% & 35 & 0.000000 & 0.000000 & 0.000\% \\
            16 & 0.007012 & 0.007062 & 0.718\% & 36 & 0.001847 & 0.001930 & 4.511\% \\
            17 & 0.000000 & 0.000000 & 0.000\% & 37 & 0.000000 & 0.000000 & 0.000\% \\
            18 & 0.011080 & 0.011106 & 0.233\% & 38 & 0.001640 & 0.001595 & 2.768\% \\
            19 & 0.000000 & 0.000000 & 0.000\% & 39 & 0.000000 & 0.000000 & 0.000\% \\
            20 & 0.007558 & 0.007560 & 0.029\% & 40 & 0.001417 & 0.001418 & 0.064\% \\
            \bottomrule \\
        \end{tabular}
        \caption{Comparison of $\delta_{\bm{u}}(t)$ and $\widetilde{\delta}_{\bm{u}}(t)$ for $a_1 = 4$ and $a_2 = -1$.}\label{tab:4--1}
    \end{table}
\end{center}

\begin{center}
    \begin{table}
        \begin{tabular}{cccc|cccc}
            \toprule
            $t$ & $\delta_{\bm{u}}(t)$ & $\widetilde{\delta}_{\bm{u}}(t)$ & error & $t$ & $\delta_{\bm{u}}(t)$ & $\widetilde{\delta}_{\bm{u}}(t)$ & error \\
            \midrule
            1 & 0.224373 & 0.224381 & 0.003\% & 21 & 0.004765 & 0.004761 & 0.088\% \\
            2 & 0.168280 & 0.168315 & 0.021\% & 22 & 0.001531 & 0.001548 & 1.104\% \\
            3 & 0.199443 & 0.199323 & 0.060\% & 23 & 0.000443 & 0.000446 & 0.572\% \\
            4 & 0.056093 & 0.056196 & 0.183\% & 24 & 0.018698 & 0.018774 & 0.408\% \\
            5 & 0.011337 & 0.011407 & 0.620\% & 25 & 0.000453 & 0.000455 & 0.337\% \\
            6 & 0.149582 & 0.149463 & 0.080\% & 26 & 0.001079 & 0.001082 & 0.254\% \\
            7 & 0.005361 & 0.005354 & 0.128\% & 27 & 0.002462 & 0.002485 & 0.924\% \\
            8 & 0.000000 & 0.000000 & 0.000\% & 28 & 0.001340 & 0.001352 & 0.880\% \\
            9 & 0.022160 & 0.022184 & 0.107\% & 29 & 0.000276 & 0.000290 & 4.946\% \\
            10 & 0.008503 & 0.008521 & 0.217\% & 30 & 0.007558 & 0.007623 & 0.862\% \\
            11 & 0.002041 & 0.002023 & 0.904\% & 31 & 0.000241 & 0.000230 & 4.671\% \\
            12 & 0.024930 & 0.024918 & 0.050\% & 32 & 0.000000 & 0.000000 & 0.000\% \\
            13 & 0.001439 & 0.001428 & 0.765\% & 33 & 0.001815 & 0.001792 & 1.247\% \\
            14 & 0.004021 & 0.003973 & 1.185\% & 34 & 0.000619 & 0.000663 & 7.141\% \\
            15 & 0.010077 & 0.010214 & 1.358\% & 35 & 0.000271 & 0.000285 & 5.219\% \\
            16 & 0.000000 & 0.000000 & 0.000\% & 36 & 0.002770 & 0.002769 & 0.038\% \\
            17 & 0.000825 & 0.000860 & 4.232\% & 37 & 0.000168 & 0.000180 & 6.855\% \\
            18 & 0.016620 & 0.016679 & 0.353\% & 38 & 0.000492 & 0.000478 & 2.870\% \\
            19 & 0.000656 & 0.000627 & 4.445\% & 39 & 0.001279 & 0.001292 & 1.007\% \\
            20 & 0.002834 & 0.002852 & 0.628\% & 40 & 0.000000 & 0.000000 & 0.000\% \\
            \bottomrule \\
        \end{tabular}
        \caption{Comparison of $\delta_{\bm{u}}(t)$ and $\widetilde{\delta}_{\bm{u}}(t)$ for $a_1 = 10$ and $a_2 = 2$.}\label{tab:10-2}
    \end{table}
\end{center}

\FloatBarrier

\end{document}